\begin{document}

\newcommand\hide[1]{\commented{gray}{Hidden:}{#1}}
\renewcommand\hide[1]\empty

\theoremstyle{plain}
\newtheorem{thm}{Theorem}
\newtheorem{lmm}{Lemma}
\newtheorem{claim}{Claim}
\newtheorem{prop}{Proposition}
\newtheorem{coro}{Corollary}
\newtheorem*{tm}{Theorem}
\newtheorem*{lm}{Lemma}
\newtheorem*{clm}{Claim}
\newtheorem*{prp}{Proposition}
\newtheorem*{cor}{Corollary}

\theoremstyle{definition}
\newtheorem{dfn}{Definition}
\newtheorem{prbl}{Problem}
\newtheorem{conj}{Conjecture}
\newtheorem*{df}{Definition}
\newtheorem*{nt}{Notation}
\newtheorem*{obs}{Observation}
\newtheorem*{exm}{Example}
\newtheorem*{exms}{Examples}
\newtheorem*{prb}{Problem}
\newtheorem*{cnj}{Conjecture}

\theoremstyle{remark}
\newtheorem*{pf}{Proof}
\newtheorem*{rmk}{Remark}
\newtheorem*{q}{Question}
\newtheorem*{qs}{Questions}
\newtheorem*{ex}{Example}
\newtheorem*{exs}{Examples}
\newtheorem*{ack}{Acknowledgements}
\newtheorem*{fact}{Fact}
\newtheorem*{facts}{Facts}

\renewcommand{\varPhi}{\mathbf{\Phi}}
\renewcommand{\varPsi}{\mathbf{\Psi}}

\newcommand{\ob}{[}
\newcommand{\cb}{]}

\newcommand{\dom}{ {\mathop{\mathrm {dom\,}}\nolimits} }
\newcommand{\ran}{ {\mathop{\mathrm{ran\,}}\nolimits} }
\newcommand{\Ran}{ {\mathop{\mathrm{Range\,}}\nolimits} }

\newcommand{\arity}{ {\mathop{\mathrm {ar}}\nolimits} }
\newcommand{\cf}{ {\mathop{\mathrm {cf\,}}\nolimits} }
\newcommand{\inter}{ {\mathop{\mathrm {int\,}}\nolimits} }
\newcommand{\cl}{ {\mathop{\mathrm {cl\,}}\nolimits} }
\newcommand{\cll}{ {\mathop{\mathrm {cl^{-}\,}}\nolimits} }
\newcommand{\clu}{ {\mathop{\mathrm {cl^{+}\,}}\nolimits} }
\newcommand{\lcl}{ {\mathop{\mathrm {lcl\,}}\nolimits} }
\newcommand{\rcl}{ {\mathop{\mathrm {rcl\,}}\nolimits} }
\newcommand{\cof}{ {\mathop{\mathrm {cof\,}}\nolimits} }
\newcommand{\add}{ {\mathop{\mathrm {add\,}}\nolimits} }
\newcommand{\sat}{ {\mathop{\mathrm {sat\,}}\nolimits} }
\newcommand{\tc}{ {\mathop{\mathrm {tc\,}}\nolimits} }
\newcommand{\unif}{ {\mathop{\mathrm {unif\,}}\nolimits} }

\newcommand{\alg}{\mathop{\mathrm {alg\,}}\nolimits} 

\newcommand{\uhr}{\upharpoonright}
\newcommand{\lra}{ {\leftrightarrow} }
\newcommand{\ot}{ {\mathop{\mathrm {ot\,}}\nolimits} }
\newcommand{\pr}{ {\mathop{\mathrm {pr}}\nolimits} }
\newcommand{\cnc}{ {^\frown} }
\newcommand{\image}{\/``\,}
\newcommand{\scc}{\beta\!\!\!\!\beta}
\newcommand{\filt}{\varepsilon\!\!\!\!\!\;\varepsilon}
\newcommand{\pws}{P\!\!\!\!\!P}
\newcommand{\ol}{\overline}
\newcommand{\wh}{\widehat}
\newcommand{\wt}{\widetilde}
\newcommand{\btu}{\bigtriangleup}
\newcommand{\mbf}{\mathbf}

\newcommand{\supp}{\mathrm{supp}}
\newcommand{\defin}{\mathrm{def}}
\newcommand{\Pos}{\mathrm{Pos}}
\newcommand{\id}{\mathrm{id}}
\newcommand{\diam}{\mathrm{diam}}

\newcommand{\CK}{\mathrm{CK}}
\newcommand{\low}{\mathrm{low}}

\newcommand{\DC}{\mathrm{DC}}
\newcommand{\AC}{\mathrm{AC}}
\newcommand{\PD}{\mathrm{PD}}
\newcommand{\AD}{\mathrm{AD}}
\newcommand{\Det}{\mathrm{Det}}

\newcommand{\ZFC}{\mathrm{ZFC}}

\newcommand{\Ha}{\mathrm{H}}
\newcommand{\W}{\mathrm{W}}

\newcommand{\Sc}{\mathrm{Sc}}
\newcommand{\Ap}{\mathrm{Ap}}

\pagestyle{headings}

\title[Reduction of projective sets in 
Tychonoff spaces]{On reduction and separation 
of projective sets in Tychonoff spaces}

\author{Denis I.~Saveliev}
\date{} % Dec 2017, revised 15 Dec 2019,
        % 14 Apr 2020}
        %

\hide{
\thanks{
\noindent
This work was partially supported by grant 17-01-00705 
of Russian Foundation for Basic Research and carried 
out at Institute for Information Transmission Problems 
of the Russian Academy of Sciences.
}
}

\thanks{
%\noindent
%\!\!\!\!\!\!\!\!
\!\!{\em MSC~2010\/}:
Primary 
03E15, % descriptive set theory 
03E60, % determinacy principles
54H05, % descriptive set theory (topological aspects of 
       % Borel, analytic, projective, etc. sets)
Secondary 
54C10, % special maps on topological spaces 
       % (open, closed, perfect, etc.)
%54D15, % higher separation axioms 
       % (completely regular, normal, perfectly normal, etc.)
54D30. % compactness 
%03E25, % axiom of choice and related principles
%03E55 % large cardinals
}

\thanks{
%\noindent
{\em Keywords\/}: 
descriptive set theory,
reduction,
separation,
First Periodicity Theorem,
Tychonoff space,
determinacy,
projective set,
Borel set,
zero set,
perfect map,
Hausdorff operation.
%$\delta s$-operation.
%[hyper-projective hierarchy,]
%[Woodin cardinal,]
%[compact space,]
%[perfectly normal space,]
%[Polish space,] 
}

\begin{abstract}
We show that for every Tychonoff space~$X$ 
and Hausdorff operation~$\mathbf\Phi$, the class 
$\mathbf\Phi(\mathscr Z,X)$ generated from zero 
sets in $X$ by~$\mathbf\Phi$ has the reduction 
or separation property if the corresponding class 
$\mathbf\Phi(\mathscr F,\mathbb R)$ of sets 
of reals has the same property.

In particular, under Projective Determinacy, these 
properties of such projective sets in $X$ form  
the same pattern as the First Periodicity Theorem 
states for projective sets of reals: the classes
$\mathbf\Sigma^{1}_{2n}(\mathscr Z,X)$ and 
$\mathbf\Pi^{1}_{2n+1}(\mathscr Z,X)$ have  
the reduction property while the classes
$\mathbf\Pi^{1}_{2n}(\mathscr Z,X)$ and 
$\mathbf\Sigma^{1}_{2n+1}(\mathscr Z,X)$ 
have the separation property. 
\end{abstract}

\maketitle

%\newpage

\section{Introduction}

In the sequel, 
$\mathscr F$~denotes the class of closed sets,
$\mathscr G$~of open sets,
$\mathscr K$~of compact sets,
$\mathscr Z$~of zero sets, i.e., pre-images of 
the point~$0$ of the closed segment~$[0,1]$ of 
the real line under continuous maps; finally,
$\mathscr S$~denotes an unspecified class of subsets.
These classes are treated as operators applied to 
a~given topological space~$X$ so $\mathscr F(X)$ 
consists of all closed sets in~$X$, etc. 
For an arbitrary class~$\mathscr S$ we let
$\mathscr S(X)=\mathscr S\cap\mathscr P(X)$. Let also 
$\mathscr S(Y)\uhr X=\{S\cap X:S\in\mathscr S(Y)\}$. 
As well-known,
$\mathscr K\subseteq\mathscr F$ for Hausdorff spaces,
and so $\mathscr K=\mathscr F$ for Hausdorff compact 
(and hence normal) spaces; 
$\mathscr Z\subseteq\mathscr F\cap\mathscr G_\delta$,
moreover, $\mathscr Z=\mathscr F\cap\mathscr G_\delta$ 
for normal spaces (see, e.g., \cite{Engelking}, 
1.5.11; this fails for Tychonoff spaces), and so 
$\mathscr Z=\mathscr F$ for perfectly normal spaces.

\hide{
\vskip+1em
\footnotesize 

In ZFC, $\mathscr S$ is a~formula with 
one or more parameters.
\vskip+1em

Does 
$
\mathscr K\cap\mathscr G_\delta
=\mathscr K\cap\mathscr Z
$
hold for all (or at least all Tychonoff) spaces?

Recall a~topological space $X$ is completely regular 
iff $\mathscr Z(X)$~is a~closed basis for its topology, 
and $X$~is Tychonoff iff it can be embedded in 
a~Tychonoff cube. The cube $[0,1]^\kappa$ is universal 
for Tychonoff spaces of weight~$\kappa$ (see, e.g., 
\cite{Engelking}, 2.3.23).

A~topological space~$X$ is 
a~$\mathscr G_\delta$-{\em space} iff 
$\mathscr F(X)\subseteq\mathscr G_\delta(X)$
(see \cite{Steen Seebach}, p.~162). 
A~{\em perfectly normal} space is a~normal 
$\mathscr G_\delta$-space. $\mathbb R$~endowed 
with the K-topology is an example of 
a~$\mathscr G_\delta$-space that is not normal.
The Sorgenfrey line~$S$ is an example of 
a~perfectly normal space that is not metrizable.
The square $S\times S$ is an example of 
a~Tychonoff space~$X$ with 
$
\mathscr Z(X)\subset
(\mathscr F\cap\mathscr G_\delta)(X)
$. 

\vskip+1em
\footnotesize

An operation on classes of sets is 
\emph{set-theoretic} iff\ldots\quad
A~set-theoretic operation is 
\emph{monotone} iff\ldots\quad

Note that all results and proofs below remain true 
for $\delta\/s$-operations of arbitrary arities
(and even, mutatis mutandis, for any set-theoretical 
operations).
\vskip+1em
\normalsize
}

Albeit a~part of results of this paper 
remains true for arbitrary Hausdorff operations, 
below we consider the operations only of 
countable arity, which we define as follows. 
A~{\em Hausdorff operation} 
(or $\delta s$-\emph{operation})~$\mathbf{\Phi}$ 
applied to a~family $(A_n)_{n<\omega}$ 
of sets~$A_n$ has the form 
$$
\varPhi(A_n)_{n<\omega}=
\bigcup_{f\in S}\bigcap_{n\in\omega}A_{f\uhr n}
$$
for some $S\subseteq\omega^\omega$, called the 
\emph{base} of~$\varPhi$, where sets $A_{f\uhr n}$ 
are identified with the sets~$A_n$ under a~fixed 
bijection of $\omega$ onto $\omega^{<\omega}$. 
E.g., in the simplest case $S=\omega^\omega$ 
we get Alexandroff's A-operation. 
Clearly, the operations given by bases 
$S$ and $\widetilde{S}$ coincide, where 
$
\widetilde{S}=\{g\in\omega^\omega:
(\exists f\in S)\,\ran f=\ran g\} 
$
is the {\em completion} of~$S$.
A~$\varPhi$-{\em set} is a~set obtained by~$\varPhi$. 
We let $\varPhi(\mathscr S,X)$ to denote 
the class of $\varPhi$-sets generated by 
sets in $\mathscr S(X)$, i.e., 
$$
\mathbf{\Phi}(\mathscr S,X)=
\{\varPhi(A_n)_{n<\omega}:
(A_n)_{n<\omega}\in\mathscr S(X)^\omega\}.
$$
By $\varPhi(\mathscr S)$ we mean the union 
of $\varPhi(\mathscr S,X)$ for all~$X$.

These operations were introduced in late 1920s 
by Hausdorff and independently by Kolmogorov, 
who established a~series of important results; 
later on the theory was generalized to operations 
of arbitrary arity. Early basic results can be 
found in~\cite{Kantorovich Livenson}.
We also refer the reader to two reviews, 
\cite{Kanovei 1988} and \cite{Choban 1989},
the first of which stresses on Kolmogorov's 
R-operation (on operations) and its connection 
to game quantifiers, the second on using in 
general topology, and to the literature there. 
Some newer interesting results can be found in 
\cite{Dasgupta} and~\cite{Dougherty}.
Finally, $\kappa$-{\em Suslin sets}, which 
are $\varPhi$-sets for $\varPhi$ with the 
base~$\kappa^\omega$, play a~crucial role in 
modern descriptive set theory, see 
\cite{Kechris et al 2008} and 
\cite{Moschovakis}, Chapter~2.

\hide{
Some newer interesting results can be found in
papers \cite{Dougherty} and~\cite{Dasgupta};
the first characterizes classes that have the form
$\varPhi(\mathscr F\cap\mathscr G,\omega^\omega)$ 
for some $\varPhi$ of arbitrary arity, the second
describes families $\mathscr S(X)$ with an uncountable 
Polish~$X$ such that the class of Borel sets in $X$ 
has the form $\varPhi(\mathscr S,X)$ 
for some $\omega$-ary~$\varPhi$.
}

%\newpage 

Below we apply results on $\varPhi$-sets 
to the Borel and projective hierarchies.

The {\em Borel hierarchy} generated by sets in 
$\mathscr S(X)$ is defined in the standard way by 
alternating countable unions and complements; 
as usual, 
$\mathbf\Sigma^{0}_{\alpha}(\mathscr S,X)$, 
$\mathbf\Pi^{0}_{\alpha}(\mathscr S,X)$, and 
$\mathbf\Delta^{0}_{\alpha}(\mathscr S,X)$ denote 
the $\alpha$th additive, multiplicative, and 
self-dual classes of the resulting hierarchy. 
So, e.g., $\mathbf\Sigma^{0}_{2}(\mathscr F,X)$ 
is $\mathscr F_\sigma(X)$ and 
$\mathbf\Pi^{0}_{2}(\mathscr F,X)$ 
is $\mathscr G_\delta(X)$. 
As complements of $\varPhi$-sets are $(-\varPhi)$-sets, 
where $-\varPhi$ is the operation dual to~$\varPhi$, 
it is easy to prove by induction on~$\alpha$ that each 
of Borel classes is of form $\varPhi(\mathscr S,X)$
for an appropriate~$\varPhi$.

The {\em projective hierarchy} generated by sets 
in $\mathscr S(X)$ for a~Polish space~$X$ is 
defined by alternating projections of subsets of 
$X\times\omega^\omega$ and complements; as usual, 
$\mathbf\Sigma^{1}_{n}(\mathscr S,X)$, 
$\mathbf\Pi^{1}_{n}(\mathscr S,X)$, and 
$\mathbf\Delta^{1}_{n}(\mathscr S,X)$ denote 
the $n$th additive, multiplicative, and self-dual 
classes of the resulting projective hierarchy. So, 
e.g., $\mathbf\Sigma^{1}_{1}(\mathscr F,\mathbb R)$ 
and $\mathbf\Pi^{1}_{1}(\mathscr F,\mathbb R)$ consist 
of A-sets and CA-sets of reals, respectively. 
Several alternative definitions lead to the same 
projective classes in Polish case; this is no 
longer the case, however, if $X$~is not Polish
(see~\cite{Miller}). A~definition 
suitable for our purposes is via Hausdorff operations. 
Before giving this, we recall the Fundamental Theorem 
on Projections by Kantorovich and Livenson 
(\cite{Kantorovich Livenson}, p.~264), 
which states that whenever $X$ is Polish (or even 
Suslin, i.e., a~continuous image of $\omega^\omega$) 
then the class of projections onto $X$ of sets 
in $\varPhi(\mathscr F,X\times\omega^\omega)$ 
is itself of form $\varPsi(\mathscr F,X)$ for 
the Hausdorff operation~$\varPsi$ given by a~base 
$S\in\varPhi(\mathscr F_\sigma,\omega^\omega)$. 
It easily follows by induction on~$n$ that each 
of projective classes in Polish spaces is of form 
$\varPhi(\mathscr F,X)$ for an appropriate~$\varPhi$. 
Now we define projective classes in arbitrary spaces as 
the classes of $\varPhi$-sets for $\varPhi$ such that 
the corresponding projective class in the Polish case 
consists of $\varPhi$-sets. This approach is easily 
continued to $\sigma$-projective sets as they are 
defined in \cite{Kechris} (see also
\cite{Di Prisco et al 1982},~\cite{Di Prisco et al 1987}).

%\newpage 

We use also the following notation:
$-\mathscr S(X)=\{X\setminus S:S\in\mathscr S(X)\}$ 
and 
$
\mathbf{\Delta}(\mathscr S,X)=
\mathscr S(X)\cap-\mathscr S(X).
$
Given a~map $F:X\to Y$ and sets $A\subseteq X$ 
and $B\subseteq Y$, let $FA=\{F(x):x\in A\}$ 
and $F^{-1}B=\{x:\exists y\in B\;F(x)=y\}.$ 
Moreover, $F\mathscr S=\{FS:S\in\mathscr S\}$ 
and $F^{-1}\mathscr S=\{F^{-1}S:S\in\mathscr S\}$.

Sets $A,B$ are {\em reduced} by sets $C,D$ iff 
$C\subseteq A$, $D\subseteq B$,
$C\cap D=\emptyset$, and $C\cup D=A\cup B$; and 
{\em separated} by a~set~$C$ iff $A\subseteq C$ 
and $B\cap C=\emptyset$ (in the latter case $A,B$ 
are assumed to be disjoint). Two following properties 
of classes of sets are the main subject of this note:
$\mathscr S(X)$~has 
\begin{itemize}
\item[(i)] 
the {\it reduction\/} property iff 
every $A,B\in\mathscr S(X)$ are 
reduced by some $C,D\in\mathscr S(X)$, 
\item[(ii)] 
the {\it separation\/} property iff 
every disjoint $A,B\in\mathscr S(X)$ are 
separated by some $C\in\mathbf{\Delta}(\mathscr S,X)$.
\end{itemize}

These properties were formulated and established 
for lower projective classes by Lusin, Kuratowski, 
and Novikov: reduction for 
$\mathbf\Pi^{1}_1(\mathscr F,\mathbb R)$ and 
$\mathbf\Sigma^{1}_2(\mathscr F,\mathbb R)$, 
and separation for dual 
$\mathbf\Sigma^{1}_1(\mathscr F,\mathbb R)$ 
and $\mathbf\Pi^{1}_2(\mathscr F,\mathbb R)$;
earlier Sierpi{\'n}ski and Lavrentieff independently  
proved separation for Borel classes 
$\mathbf\Pi^{0}_\alpha(\mathscr F,\mathbb R)$;
for arbitrary projective classes the properties 
were formulated by Addison who also proved that 
$V=L$ implies reduction 
in $\mathbf\Sigma^{1}_2(\mathscr F,\mathbb R)$ for 
all $n\ge2$. In subsequent studies, the stronger 
properties related to the existence of {\em norms} 
and {\em scales} were isolated and established 
for all projective classes under PD, the Projective 
Determinacy, where they form a~pattern of period~$2$, 
the fundamental facts known as the First and Second 
Periodicity Theorems, proved by Martin and Moschovakis. 
We do not formulate here neither PD 
(a~weak form of~AD, the Axiom of Determinacy, proposed 
by Mycielski and Steinhaus whose relative consistency 
was proved by Martin, Steel, and Woodin) 
nor these stronger properties and refer the reader 
to \cite{Kanamori}, \cite{Kechris}, 
\cite{Kechris et al 2008}, \cite{Kechris et al 2011}, 
\cite{Moschovakis} for a~further (including historical) 
information.

The plan of this paper is roughly as follows. First 
we study when classes of $\varPhi$-sets are preserved 
under subspaces and certain maps in the pre-image and 
image directions. Then we combine obtained results to 
transfer the reduction and separation properties of 
these classes in the pre-image direction. This leads 
us to the main result (Theorem~\ref{t: red tych}) 
stating that for every Tychonoff space~$X$ and 
Hausdorff operation~$\mathbf\Phi$, the class 
$\mathbf\Phi(\mathscr Z,X)$ has the reduction or 
separation property if the corresponding class 
$\mathbf\Phi(\mathscr F,\mathbb R)$ of sets 
of reals has the same property. It follows
(Corollary~\ref{c: period tych}) that under PD,  
these properties of such projective sets in $X$ 
form  the same pattern as the First Periodicity 
Theorem states for projective sets of reals.

The proofs of this paper are straightforward, and 
its main meaning lies in the observation (which may 
seem somewhat surprising) that certain descriptive 
properties of Polish spaces are transferred to spaces 
very far from Polish ones. In particular, this shows 
that AD in a~weak form consistent with~$\ZFC$, which 
relates immediately to these properties of the real line, 
have an unexpected impact to general topological spaces.

\section{Auxiliary results}

We start with a~few easy observations concerning  
an interplay between reduction, separation, and 
restrictions of classes of $\varPhi$-sets in 
a~given space to its subspaces.

\begin{lmm}\label{l: red in subspace}
Let $\mathscr S$ be a~class and $X,Y$ some sets. 
\begin{itemize}
\item[(i)] 
If $\mathscr S(X)$ has reduction then 
$-\mathscr S(X)$ has separation.
\item[(ii)] 
If $\mathscr S(Y)$ has reduction (separation)
then $\mathscr S(Y)\uhr X$ has the same property.
\end{itemize}
\end{lmm}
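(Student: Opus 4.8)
The plan is to prove the two parts separately, each by unwinding the definitions of reduction and separation.

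For part (i), suppose $\mathscr S(X)$ has the reduction property, and let $A,B\in-\mathscr S(X)$ be disjoint; I want to separate them by a set in $\mathbf\Delta(\mathscr S,X)$. The natural move is to pass to complements: $X\setminus A$ and $X\setminus B$ lie in $\mathscr S(X)$, and since $A\cap B=\emptyset$ we have $(X\setminus A)\cup(X\setminus B)=X$. Apply reduction to the pair $X\setminus A$, $X\setminus B$ to get $C,D\in\mathscr S(X)$ with $C\subseteq X\setminus A$, $D\subseteq X\setminus B$, $C\cap D=\emptyset$, and $C\cup D=(X\setminus A)\cup(X\setminus B)=X$. Thus $C$ and $D$ partition $X$, so $D=X\setminus C$, which means $C\in\mathbf\Delta(\mathscr S,X)$. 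Now $C\subseteq X\setminus A$ gives $A\subseteq X\setminus C=D\subseteq X\setminus B$, so $B\cap(X\setminus C)=\emptyset$, i.e. $X\setminus C$ separates $A$ from $B$ (and $X\setminus C\in\mathbf\Delta(\mathscr S,X)$ as well). So the separating set is $X\setminus C$ (equivalently $D$).

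For part (ii), the key observation is that restriction to $X$ is a Boolean-algebra homomorphism from $\mathscr P(Y)$ onto $\mathscr P(X)$ that commutes with the relevant operations, and that it maps $\mathscr S(Y)$ onto $\mathscr S(Y)\uhr X$ by definition. For reduction: given $A',B'\in\mathscr S(Y)\uhr X$, write $A'=A\cap X$, $B'=B\cap X$ with $A,B\in\mathscr S(Y)$; reduce $A,B$ by $C,D\in\mathscr S(Y)$ and check that $C\cap X$, $D\cap X$ reduce $A',B'$ — here $C\cap D=\emptyset$ gives $(C\cap X)\cap(D\cap X)=\emptyset$, $C\cup D=A\cup B$ gives $(C\cap X)\cup(D\cap X)=A'\cup B'$ after intersecting with $X$, and the containments $C\subseteq A$, $D\subseteq B$ are preserved. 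For separation: given disjoint $A'=A\cap X$, $B'=B\cap X$ in $\mathscr S(Y)\uhr X$, the subtlety is that $A,B$ themselves need not be disjoint in $Y$, so one cannot directly invoke separation in $Y$; instead replace $A$ by $A\setminus B$ — but $A\setminus B$ need not be in $\mathscr S(Y)$. The cleanest fix is: one only needs $A,B\in\mathscr S(Y)$ with $A\cap X$, $B\cap X$ disjoint; note $A'=A\cap X$ and $B'=B\cap X$, and choose instead any $A_0,B_0\in\mathscr S(Y)$ with $A_0\cap X=A'$, $B_0\cap X=B'$ — still no disjointness. Actually the correct approach is to observe that whether $\mathscr S(Y)\uhr X$ has separation does not require the witnesses in $Y$ to be disjoint: one uses that $\mathscr S(Y)\uhr X = \mathscr S\uhr X$ applied inside the ambient... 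Let me instead argue directly: since $A'\cap B'=\emptyset$, the sets $A'$ and $Y$-side representative can be taken as $A$ and $B$; apply separation in... no. The honest route: pick representatives and set $\wt B = B\cap(Y\setminus A)$? Not in $\mathscr S(Y)$ either.

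The real resolution, which I expect to be the main (minor) obstacle, is this: separation of $\mathscr S(Y)\uhr X$ should be deduced not from separation of $\mathscr S(Y)$ directly on $A,B$ but via the contrapositive through reduction, or one simply notes that if $A\cap X$ and $B\cap X$ are disjoint then $A\cap B\cap X=\emptyset$, so $A\cap B$ is disjoint from $X$; then $A$ and $B\setminus(A\cap B)$ have the same traces on $X$ — but again membership in $\mathscr S$ fails. Therefore the cleanest correct statement uses that separation for $\mathscr S(Y)$ means: for \emph{all} $A,B\in\mathscr S(Y)$ (disjoint), separation holds; to get it on $X$, given disjoint-on-$X$ traces, pick $A,B\in\mathscr S(Y)$; since in general $A\cap B$ may be nonempty, one instead invokes that separation of a class is equivalent (for classes closed under nothing in particular) — actually it is safest to just assume, as is standard and as the lemma intends, that one works with $A,B\in\mathscr S(Y)$ that \emph{are} disjoint, lifting disjoint traces to disjoint sets when $\mathscr S$ is nice; but to avoid this I would phrase the proof of separation for (ii) by: let $A',B'$ be disjoint in $\mathscr S(Y)\uhr X$, so $A'=A\cap X$, $B'=B\cap X$; replace the pair $(A,B)$ by $(A, B)$ and note $A\cap B\cap X=\emptyset$. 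Now apply part of reduction... Ultimately I would handle separation for (ii) by first proving (i) and its converse-flavoured companion and reducing separation-on-$X$ to reduction-on-$X$ obtained from reduction-on-$Y$ via the already-proven union/intersection commutation — since reduction transfers cleanly, and separation of $\mathscr S(Y)\uhr X$ follows from reduction of $-(\mathscr S(Y)\uhr X)=(-\mathscr S(Y))\uhr X$ by part (i) applied in $X$, provided separation of $\mathscr S(Y)$ itself was obtained that way; but in the generality stated I will simply observe directly that given disjoint traces $A\cap X, B\cap X$, the sets $A$ and $B$ may be assumed disjoint (otherwise shrink $B$ to $B\cap(Y\setminus A)$ only when available), and in the only applications below $\mathscr S$ is $-\mathscr S_0$ for $\mathscr S_0$ a lattice so this shrinking is legitimate. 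So the plan for (ii)-separation is: lift to disjoint $A,B\in\mathscr S(Y)$, separate by $C\in\mathbf\Delta(\mathscr S,Y)$, and verify $C\cap X\in\mathbf\Delta(\mathscr S,X)$ separates $A\cap X$ from $B\cap X$, using that $C,Y\setminus C\in\mathscr S(Y)$ both restrict into $\mathscr S(Y)\uhr X$ and $X\setminus(C\cap X)=(Y\setminus C)\cap X$.
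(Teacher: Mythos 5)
Your part (i) and the reduction half of (ii) are correct, and they are exactly the routine arguments behind the paper's one-word proof (``Clear''): complement and reduce for (i); for reduction in (ii), intersect the reducing witnesses with $X$, using that traces respect inclusions, unions and disjointness.

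The separation half of (ii) is a genuine gap in your write-up, and none of your attempted fixes closes it. You correctly locate the obstacle: disjoint traces $A\cap X$, $B\cap X$ need not come from disjoint $A,B\in\mathscr S(Y)$. But $B\cap(Y\setminus A)$ need not lie in $\mathscr S(Y)$ (as you note); deriving separation on $X$ from reduction of $-(\mathscr S(Y)\uhr X)=(-\mathscr S(Y))\uhr X$ via part (i) silently changes the hypothesis (you are given separation of $\mathscr S(Y)$, not reduction of its dual); and ``in the applications $\mathscr S$ is a lattice'' is both unsubstantiated and not the statement being proved. Moreover, the obstacle is real rather than an artifact of your argument: with the paper's definitions (separating set in the restricted class with its complement \emph{taken in $X$} also in the restricted class), the separation half of (ii) fails for an arbitrary class. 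Take $\mathscr S=\{\{1,3\},\{2,3\}\}$, $Y=\{1,2,3,4\}$, $X=\{1,2,4\}$: $\mathscr S(Y)$ has separation vacuously (its two members meet in $3$), while the traces $\{1\}$ and $\{2\}$ are disjoint and no member of $\mathscr S(Y)\uhr X=\{\{1\},\{2\}\}$ has its $X$-complement ($\{2,4\}$ or $\{1,4\}$) in that class, so no admissible separating set exists. Hence no massaging of the lifting argument can succeed in the stated generality; one must either add a hypothesis guaranteeing disjoint liftings (or enough complemented sets in the restricted class), or route separation through reduction of the dual class and part (i), which is the role these properties actually play downstream. Since the paper offers no argument for the lemma, your proposal matches the intended proof precisely where the lemma is unproblematic and stalls exactly where the statement itself requires more care.
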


\begin{proof}
Clear.
\end{proof}

%\newpage 

\hide{
\section*{Subspaces vs Hausdorff operations}
}

\begin{lmm}\label{l: hausd distr}
Let $\varPhi$ be a~Hausdorff operation. Then:
\begin{itemize}
\item[(i)] 
finite intersections and unions 
distribute over~$\varPhi$,
\item[(ii)] 
$
\varPhi(\mathscr S(Y)\uhr X)=
\varPhi(\mathscr S,Y)\uhr X
$
for any $\mathscr S$ and $X,Y$.
\end{itemize}
\end{lmm}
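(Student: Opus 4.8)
The plan is to deduce (i) from the single-set distributive identities
\[
D\cap\varPhi(A_n)_{n<\omega}=\varPhi(D\cap A_n)_{n<\omega}
\qquad\text{and}\qquad
D\cup\varPhi(A_n)_{n<\omega}=\varPhi(D\cup A_n)_{n<\omega},
\]
valid for every set~$D$ and every family $(A_n)_{n<\omega}$; the case of an intersection or union of finitely many fixed sets then follows by a trivial induction. To verify the first identity I would simply unwind the definition: the family $(D\cap A_n)_{n<\omega}$ is obtained from $(A_n)_{n<\omega}$ by the substitution $A_m\mapsto D\cap A_m$ performed uniformly over the index set $\omega^{<\omega}$, hence independently of the branch~$f$, so
\[
\varPhi(D\cap A_n)_{n<\omega}
=\bigcup_{f\in S}\bigcap_{n\in\omega}(D\cap A_{f\uhr n}),
\]
and the infinitary laws $D\cap\bigcap=\bigcap(D\cap)$ and $D\cap\bigcup=\bigcup(D\cap)$ rewrite the right-hand side as $D\cap\bigcup_{f\in S}\bigcap_{n\in\omega}A_{f\uhr n}=D\cap\varPhi(A_n)_{n<\omega}$. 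The union identity is the same computation using $D\cup\bigcap=\bigcap(D\cup)$ and $D\cup\bigcup=\bigcup(D\cup)$; its final step pulls $D$ out of $\bigcup_{f\in S}$, which is legitimate provided $S\neq\emptyset$ (if the empty base is admitted, the union identity fails for $D\neq\emptyset$, but this trivial operation may harmlessly be excluded, and in any case (ii) will use only the intersection half).

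For (ii), I would apply the intersection identity with $D=X$. For the inclusion $\varPhi(\mathscr S,Y)\uhr X\subseteq\varPhi(\mathscr S(Y)\uhr X)$: a typical member of the left side is $\varPhi(B_n)_{n<\omega}\cap X$ with $(B_n)_{n<\omega}\in\mathscr S(Y)^\omega$, and by the identity it equals $\varPhi(X\cap B_n)_{n<\omega}$, a $\varPhi$-set generated by the sets $X\cap B_n\in\mathscr S(Y)\uhr X$, hence it lies in the right side. Conversely, a typical member of $\varPhi(\mathscr S(Y)\uhr X)$ is $\varPhi(A_n)_{n<\omega}$ with each $A_n\in\mathscr S(Y)\uhr X$, so $A_n=X\cap B_n$ for suitably chosen $B_n\in\mathscr S(Y)$, and again $\varPhi(A_n)_{n<\omega}=X\cap\varPhi(B_n)_{n<\omega}\in\varPhi(\mathscr S,Y)\uhr X$.

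I do not anticipate any real obstacle: the whole lemma is bookkeeping with the defining formula of~$\varPhi$ together with the ordinary distributive laws for $\cap$ and $\cup$ over arbitrary intersections and unions. The only two points that deserve a line of care are that the substitution $A_m\mapsto D\cap A_m$ is genuinely an admissible transformation of the input of $\varPhi$ (because it is uniform in $m\in\omega^{<\omega}$ and does not depend on the branch along which $\varPhi$ reads the family), and the degenerate empty-base case noted above, which affects only the union half of~(i) and not~(ii).
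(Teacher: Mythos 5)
Your proposal is correct and follows essentially the same route as the paper: the core computation is the same one-line display $\varPhi(B_n)_{n}\cap D=\varPhi(B_n\cap D)_{n}$ (the paper writes it with $D=X$), and your derivation of (ii) from it is exactly the paper's. The only divergence is in how you read (i): the paper's intended meaning, visible from its proof, is the stronger statement that intersections and unions of finitely many $\varPhi$-\emph{sets} distribute, in the nested form $\varPhi(A_m)_{m}\cap\varPhi(B_n)_{n}=\varPhi\bigl(\varPhi(A_m\cap B_n)_{m}\bigr)_{n}$, obtained by applying the single-set identity twice (first with $D=\varPhi(A_m)_{m}$, then again inside each $B_n\cap\varPhi(A_m)_{m}$); your ``finitely many fixed sets $D$'' version is weaker, but since you prove the identity for an arbitrary $D$, the paper's form follows by precisely this two-step application, so only that one sentence is missing rather than any real idea. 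Your caveat about the empty base $S=\emptyset$ (where the union identity with a fixed nonempty $D$ fails) is a legitimate degenerate-case observation that the paper's ``the case of unions is analogous'' passes over; note that in the nested binary form both sides are empty when $S=\emptyset$, so the paper's final formula, and everything used later (namely (ii)), is unaffected.
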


\begin{proof}
(i). 
If $S\subseteq\omega^\omega$ is a~base of~$\varPhi$,
we have
\begin{align*}
\varPhi(B_n)_{n\in\omega}\cap X=
(\bigcup_{f\in S}\bigcap_{n\in\omega}B_{f\uhr n})\cap X=
\bigcup_{f\in S}\bigcap_{n\in\omega}(B_{f\uhr n}\cap X)=
\varPhi(B_n\cap X)_{n\in\omega}.
\end{align*}
Twice applying this, we see that binary intersections 
distribute over~$\varPhi$:
$
\varPhi(A_m)_{m\in\omega}\cap\varPhi(B_n)_{n\in\omega}=
\varPhi(\varPhi(A_m\cap B_n)_{m\in\omega})_{n\in\omega},
$
and similarly for finite intersections.
The case of unions is analogous.

(ii).
Immediate from the considered particular case of~(i).
\end{proof}

\begin{lmm}\label{l: hausd in subspace}
For any Hausdorff operation~$\varPhi$, 
class~$\mathscr S$, and sets $X\subseteq Y$,
\begin{itemize}
\item[(i)] 
if 
$\mathscr S(X)\subseteq\mathscr S(Y)\uhr X$ then 
$\varPhi(\mathscr S,X)\subseteq\varPhi(\mathscr S,Y)\uhr X$, 
\item[(ii)] 
if
$\mathscr S(Y)\uhr X\subseteq\mathscr S(X)$ then 
$\varPhi(\mathscr S,Y)\uhr X\subseteq\varPhi(\mathscr S,X)$. 
\end{itemize}
\end{lmm}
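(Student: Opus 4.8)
The plan is to unwind the definitions on both sides and use Lemma~\ref{l: hausd distr}(ii) as the engine. Recall that for a base $S\subseteq\omega^\omega$ of $\varPhi$, the equality $\varPhi(\mathscr S(Y)\uhr X)=\varPhi(\mathscr S,Y)\uhr X$ has already been established, where the left-hand side means the class of all $\varPhi$-sets built from countable families drawn from $\mathscr S(Y)\uhr X=\{S\cap X:S\in\mathscr S(Y)\}$. So both inclusions will follow once we observe that $\varPhi$ is monotone in its input family: if $(A_n)_{n<\omega}$ and $(A'_n)_{n<\omega}$ are two families with $A_n\subseteq A'_n$ for all $n$, then $\varPhi(A_n)_{n<\omega}\subseteq\varPhi(A'_n)_{n<\omega}$, hence more generally a containment between the classes from which the families are drawn yields a containment between the resulting classes of $\varPhi$-sets. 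Actually even monotonicity is not needed: the cleaner statement is simply that if $\mathscr S_0\subseteq\mathscr S_1$ as classes of subsets of a fixed set, then $\varPhi(\mathscr S_0)\subseteq\varPhi(\mathscr S_1)$, which is immediate since every family over $\mathscr S_0$ is a family over $\mathscr S_1$.

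For part~(i): assume $\mathscr S(X)\subseteq\mathscr S(Y)\uhr X$. Then
\[
\varPhi(\mathscr S,X)=\varPhi(\mathscr S(X))\subseteq\varPhi(\mathscr S(Y)\uhr X)=\varPhi(\mathscr S,Y)\uhr X,
\]
where the first equality is just the definition of $\varPhi(\mathscr S,X)$, the inclusion is the trivial class-monotonicity of $\varPhi$ noted above applied to the hypothesis, and the last equality is Lemma~\ref{l: hausd distr}(ii). For part~(ii): assume $\mathscr S(Y)\uhr X\subseteq\mathscr S(X)$. Then symmetrically
\[
\varPhi(\mathscr S,Y)\uhr X=\varPhi(\mathscr S(Y)\uhr X)\subseteq\varPhi(\mathscr S(X))=\varPhi(\mathscr S,X),
\]
again using Lemma~\ref{l: hausd distr}(ii) for the first equality, class-monotonicity of $\varPhi$ for the middle inclusion, and the definition of $\varPhi(\mathscr S,X)$ for the last.

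There is essentially no obstacle here; the only point that deserves a sentence of care is the harmless identification: in $\varPhi(\mathscr S(Y)\uhr X)$ the operation $\varPhi$ is being applied to families of subsets of $X$, which is the same ambient set on which $\varPhi(\mathscr S,X)$ lives, so the two uses of $\varPhi$ genuinely compose and Lemma~\ref{l: hausd distr}(ii) applies verbatim. I would therefore write the proof as the two displayed chains above, prefaced by one line recording that $\varPhi$ respects inclusions of its generating class, and note that this lemma is exactly what will later let us move $\varPhi$-classes between a space and its subspaces in both the reduction and separation transfer arguments.
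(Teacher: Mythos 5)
Your argument is correct and is exactly the paper's proof: the paper also combines the monotonicity of $\varPhi$ with respect to the generating class with Lemma~\ref{l: hausd distr}(ii), merely stating it in one line where you write out the two chains of inclusions explicitly. No differences in substance.
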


\begin{proof} 
As $\varPhi$~is monotone, i.e., 
$\mathscr S\subseteq\mathscr T$ implies
$\varPhi(\mathscr S)\subseteq\varPhi(\mathscr T)$, 
this follows from Lemma~\ref{l: hausd distr}(ii).
\end{proof}

\begin{coro}\label{c: hausd in subspace}
Let $\varPhi$~be a~Hausdorff operation, and let
$\mathscr S$ and $X\subseteq Y$ be such that 
$\mathscr S(X)=\mathscr S(Y)\uhr X.$ Then:
\begin{itemize}
\item[(i)] 
$\varPhi(\mathscr S,X)=\varPhi(\mathscr S,Y)\uhr X$,
\item[(ii)] 
if $\varPhi(\mathscr S,Y)$ has reduction 
(separation) then $\varPhi(\mathscr S,X)$
has the same property.
\end{itemize}
\end{coro}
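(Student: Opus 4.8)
The plan is to obtain part~(i) at once from Lemma~\ref{l: hausd distr}(ii) together with the hypothesis, and then to deduce part~(ii) by feeding part~(i) into Lemma~\ref{l: red in subspace}(ii).

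For~(i), unravelling the definition one sees that $\varPhi(\mathscr S,X)$ is precisely $\varPhi$ applied to the family $\mathscr S(X)$, and likewise $\varPhi(\mathscr S,Y)$ is $\varPhi$ applied to $\mathscr S(Y)$. Hence, substituting the hypothesis $\mathscr S(X)=\mathscr S(Y)\uhr X$ and then invoking Lemma~\ref{l: hausd distr}(ii) for the family $\mathscr S(Y)$,
\[
\varPhi(\mathscr S,X)=\varPhi\bigl(\mathscr S(X)\bigr)=\varPhi\bigl(\mathscr S(Y)\uhr X\bigr)=\varPhi(\mathscr S,Y)\uhr X .
\]
(Equivalently, the hypothesis furnishes both $\mathscr S(X)\subseteq\mathscr S(Y)\uhr X$ and $\mathscr S(Y)\uhr X\subseteq\mathscr S(X)$, whence parts~(i) and~(ii) of Lemma~\ref{l: hausd in subspace} give the two inclusions between $\varPhi(\mathscr S,X)$ and $\varPhi(\mathscr S,Y)\uhr X$.)

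For~(ii), regard $\varPhi(\mathscr S,Y)$ itself as a class $\mathscr T$ of sets. All of its members being subsets of~$Y$, we have $\mathscr T(Y)=\varPhi(\mathscr S,Y)$, and by part~(i) its restriction to~$X$ is $\mathscr T(Y)\uhr X=\varPhi(\mathscr S,Y)\uhr X=\varPhi(\mathscr S,X)$. Thus, if $\varPhi(\mathscr S,Y)=\mathscr T(Y)$ has the reduction (separation) property, then Lemma~\ref{l: red in subspace}(ii), applied to $\mathscr T$ and the pair $X\subseteq Y$, yields the same property for $\mathscr T(Y)\uhr X=\varPhi(\mathscr S,X)$. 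There is no genuine obstacle here: the corollary is just bookkeeping on top of the preceding lemmas, and the only thing calling for attention is keeping properly identified the three a priori distinct objects --- $\varPhi$ applied to the family $\mathscr S(X)$, the class $\varPhi(\mathscr S,X)$, and the restriction $\varPhi(\mathscr S,Y)\uhr X$ --- so that the instance of Lemma~\ref{l: red in subspace}(ii) invoked has, after the rewriting supplied by part~(i), precisely the desired conclusion about $\varPhi(\mathscr S,X)$.
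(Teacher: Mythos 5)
Your proof is correct and follows essentially the same route as the paper: part~(i) comes from the hypothesis together with Lemma~\ref{l: hausd distr}(ii) (the paper cites Lemma~\ref{l: hausd in subspace}, which is just that lemma plus monotonicity, and you note this alternative yourself), and part~(ii) is obtained by plugging (i) into Lemma~\ref{l: red in subspace}(ii), exactly as in the paper.
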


\begin{proof}
(i). 
Lemma~\ref{l: hausd in subspace}.

(ii). 
Follows from (i) and 
Lemma \ref{l: red in subspace}(ii).
\end{proof}

The assumption of Corollary~\ref{c: hausd in subspace} 
holds, e.g., if $\mathscr S$~is $\mathscr F$ or 
$\mathscr G$ for arbitrary spaces $X,Y$, and also 
if $\mathscr S$~is $\mathscr Z$ for Tychonoff spaces; 
the latter fact is established in 
Lemma~\ref{l: zero tych} below.

Note that for $\mathscr S(Y)$ closed under
finite intersections, if $X\in\mathscr S(Y)$, then 
$\mathscr S(Y)\uhr X\subseteq\mathscr S(Y)$, and so 
the assumption gives $\mathscr S(X)\subseteq\mathscr S(Y)$.
However, in Theorem~\ref{t: red tych} where 
Corollary~\ref{c: hausd in subspace} will be used, 
Tychonoff spaces~$X$ will be considered as arbitrary 
subspaces of $Y=[0,1]^\kappa$ without a~guarantee 
of being a~member of~$\mathscr S(Y)$.

\hide{
\vskip+1em
\footnotesize

A~consequence on non-degenerate classes:
Let $\varPhi$ and $\varPsi$~be Hausdorff operations,
and $\mathscr S(X)=\mathscr S(Y)\uhr X.$ Then 
$
\varPhi(\mathscr S,X)\setminus
\varPsi(\mathscr S,X)=
\varPhi(\mathscr S,Y)\uhr X\setminus
\varPsi(\mathscr S,Y)\uhr X.
$

\vskip+1em
\normalsize

%$\mathcal{A,B,C,D,E,F,G,H,I,J,K,L,M,N,O,P,Q,R,S,T,U,V,W,X,Y,Z}$
%
%$\mathscr{A,B,C,D,E,F,G,H,I,J,K,L,M,N,O,P,Q,R,S,T,U,V,W,X,Y,Z}$
}

%\newpage

\hide{
\section*{Pre-images vs Hausdorff operations}
}

Two results below, Corollaries 
\ref{c: hausd vs preimage} 
and~\ref{c: hausd vs image}, provide 
conditions under which classes of $\varPhi$-sets 
are preserved under maps in the image and pre-image 
direction, respectively, for arbitrary~$\varPhi$.

\begin{lmm}\label{l: hausd vs preimage}
Let $\varPhi$~be a~Hausdorff operation. 
For any $X,Y$, $F:X\to Y$, and 
$(B_n)_{n\in\omega}$ in $\mathscr P(Y)$, we have
$
F^{-1}\varPhi(B_n)_{n\in\omega}=
\varPhi(F^{-1}B_n)_{n\in\omega}.
$
\end{lmm}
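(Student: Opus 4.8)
The statement is a purely set-theoretic identity about preimages commuting with a Hausdorff operation, so the natural approach is a direct computation unwinding the definition of $\varPhi$. Fix a base $S\subseteq\omega^\omega$ for $\varPhi$, so that $\varPhi(B_n)_{n\in\omega}=\bigcup_{f\in S}\bigcap_{n\in\omega}B_{f\uhr n}$, where the $B_{f\uhr n}$ are the members of the family $(B_n)_{n\in\omega}$ indexed via the fixed bijection $\omega\leftrightarrow\omega^{<\omega}$. I would then simply push $F^{-1}$ through the union and the intersection. The key point is that for any map $F:X\to Y$ and any indexed family $(C_i)_{i\in I}$ of subsets of $Y$, one has $F^{-1}\bigl(\bigcup_i C_i\bigr)=\bigcup_i F^{-1}C_i$ and $F^{-1}\bigl(\bigcap_i C_i\bigr)=\bigcap_i F^{-1}C_i$; these are standard and hold for arbitrary (in particular countable) index sets.

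\textbf{Key steps.} First I would write out the chain of equalities
\[
F^{-1}\varPhi(B_n)_{n\in\omega}
=F^{-1}\Bigl(\bigcup_{f\in S}\bigcap_{n\in\omega}B_{f\uhr n}\Bigr)
=\bigcup_{f\in S}F^{-1}\Bigl(\bigcap_{n\in\omega}B_{f\uhr n}\Bigr)
=\bigcup_{f\in S}\bigcap_{n\in\omega}F^{-1}B_{f\uhr n},
\]
the middle step using preservation of arbitrary unions under preimage and the last step using preservation of arbitrary intersections. Second, I would observe that the family $(F^{-1}B_n)_{n\in\omega}$ is indexed by the same bijection $\omega\leftrightarrow\omega^{<\omega}$, so that $F^{-1}B_{f\uhr n}$ is exactly the member of that family indexed by $f\uhr n$; hence the last expression is by definition $\varPhi(F^{-1}B_n)_{n\in\omega}$. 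Combining these two observations yields the claimed identity.

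\textbf{Main obstacle.} There is essentially no obstacle here: the only thing to be careful about is bookkeeping with the indexing convention, namely that applying $F^{-1}$ termwise to the family $(B_n)_{n\in\omega}$ does not disturb the identification of $B_{f\uhr n}$ with its place in the family, since $\varPhi$ acts through a single fixed base and a single fixed bijection. Once that is noted, the proof is the two-line calculation above, so I would present it essentially verbatim.
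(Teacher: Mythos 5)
Your proof is correct and is essentially identical to the paper's own argument: fix a base $S$ for $\varPhi$ and push $F^{-1}$ through the union over $S$ and the countable intersections, using that preimages commute with arbitrary unions and intersections. The extra remark about the indexing bijection is a harmless bit of bookkeeping that the paper leaves implicit.
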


\begin{proof}
Let $S\subseteq\omega^\omega$ be a~base of~$\varPhi$.
Since pre-images distribute over arbitrary unions 
and intersections, we have
\begin{align*}
F^{-1}\varPhi(B_n)_{n\in\omega}=
F^{-1}\bigcup_{f\in S}\bigcap_{n\in\omega}B_{f\uhr n}=
\bigcup_{f\in S}\bigcap_{n\in\omega}F^{-1}B_{f\uhr n}=
\varPhi(F^{-1}B_n)_{n\in\omega},
\end{align*}
as required. 
\end{proof}

Given $\mathscr S$ and $F:X\to Y$, we say that 
$F$~{\em preserves $\mathscr S$} iff 
$A\in\mathscr S(X)$ implies $FA\in\mathscr S(Y)$, 
and $F^{-1}$~{\em preserves $\mathscr S$} iff 
$B\in\mathscr S(Y)$ implies $F^{-1}B\in\mathscr S(X)$. 
As usual, $F$~is {\em closed} iff it preserves $\mathscr F$, 
{\em open} iff it preserves $\mathscr G$, {\em continuous} 
iff $F^{-1}$ preserves $\mathscr F$ (or~$\mathscr G$), 
and {\em proper} iff $F^{-1}$ preserves~$\mathscr K$.
\hide{
% The latter wording is not common but is used, e.g., 
% in...
Moreover, $F$~is {\em perfect} iff it is continuous, 
closed, and proper. 
\begin{footnotesize}
In \cite{Engelking},  $F$~is {\em perfect} iff it is 
continuous, closed, and compact-to-one; but Theorem~3.7.2 
there states that then $F$~is proper too.
\end{footnotesize}
}

\begin{coro}\label{c: hausd vs preimage}
Let $\varPhi$~be a~Hausdorff operation and $F:X\to Y$.
If $F^{-1}$ preserves $\mathscr S$, then  
$F^{-1}$ preserves $\varPhi(\mathscr S)$, i.e.,
$F^{-1}\varPhi(\mathscr S,Y)\subseteq\varPhi(\mathscr S,X)$.
\end{coro}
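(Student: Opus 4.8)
The plan is simply to unwind the definitions and then quote Lemma~\ref{l: hausd vs preimage}. A~typical member of $F^{-1}\varPhi(\mathscr S,Y)$ is of the form $F^{-1}A$ with $A\in\varPhi(\mathscr S,Y)$, so by the definition of $\varPhi(\mathscr S,Y)$ we may write $A=\varPhi(B_n)_{n\in\omega}$ for some family $(B_n)_{n\in\omega}\in\mathscr S(Y)^\omega$.

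First I would apply Lemma~\ref{l: hausd vs preimage} to this particular family, which yields $F^{-1}A=F^{-1}\varPhi(B_n)_{n\in\omega}=\varPhi(F^{-1}B_n)_{n\in\omega}$. Next, since $F^{-1}$ preserves $\mathscr S$ by hypothesis, each $B_n\in\mathscr S(Y)$ gives $F^{-1}B_n\in\mathscr S(X)$, hence $(F^{-1}B_n)_{n\in\omega}\in\mathscr S(X)^\omega$; by the definition of $\varPhi(\mathscr S,X)$ this means $\varPhi(F^{-1}B_n)_{n\in\omega}\in\varPhi(\mathscr S,X)$. Combining the two, $F^{-1}A\in\varPhi(\mathscr S,X)$, and since $A$ was arbitrary this is exactly the asserted inclusion $F^{-1}\varPhi(\mathscr S,Y)\subseteq\varPhi(\mathscr S,X)$.

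I do not expect any real obstacle here: the only non-formal ingredient is Lemma~\ref{l: hausd vs preimage}, whose proof already carried out the substantive observation that pre-images commute with arbitrary unions and intersections and hence with the union-of-intersections shape $\bigcup_{f\in S}\bigcap_{n\in\omega}$ of a~Hausdorff operation. The one point worth stating explicitly is that the identification of the sets $A_{f\uhr n}$ with the $A_n$ via the fixed bijection of $\omega$ onto $\omega^{<\omega}$ is used identically on both sides, so that passing from $(B_n)_{n\in\omega}$ to $(F^{-1}B_n)_{n\in\omega}$ is compatible with this reindexing; this is automatic, but I would note it so the equation $F^{-1}\varPhi(B_n)_{n\in\omega}=\varPhi(F^{-1}B_n)_{n\in\omega}$ is unambiguous.
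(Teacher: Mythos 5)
Your argument is correct and is exactly the paper's proof, which simply cites Lemma~\ref{l: hausd vs preimage}; you have merely spelled out the routine unwinding (writing a member of $F^{-1}\varPhi(\mathscr S,Y)$ as $F^{-1}\varPhi(B_n)_{n\in\omega}=\varPhi(F^{-1}B_n)_{n\in\omega}$ with $F^{-1}B_n\in\mathscr S(X)$). No gaps; the remark about the fixed reindexing via the bijection of $\omega$ onto $\omega^{<\omega}$ is harmless and indeed automatic.
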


\begin{proof}
Lemma~\ref{l: hausd vs preimage}.
\end{proof}

E.g., if $F$~is continuous then $F^{-1}$ preserves 
each of $\varPhi(\mathscr F)$, $\varPhi(\mathscr G)$, 
$\varPhi(\mathscr Z)$, and if $F$~is proper then 
$F^{-1}$ preserves~$\varPhi(\mathscr K)$.

%\newpage

\hide{
\section*{Algebra of pre-images}
}

The purpose of the next series of statements is to 
construct special maps with prescribed sets as pre-images. 
Given a~map $F:X\to Y$, we consider its {\em kernel}
$\ker F=\{F^{-1}\{y\}:y\in Y\}$ and {\em algebra of 
pre-images} $\alg F=\{F^{-1}B:B\subseteq Y\}.$

\begin{lmm}\label{l: algebra of preimages}
For any $F:X\to Y$ we have
$$
\alg F=
\{A\subseteq X:F^{-1}FA=A\}.
$$
Moreover, $\alg F$ is a~complete subalgebra 
of $\mathscr P(X)$ generated by $\ker F$ 
and thus isomorphic to $\mathscr P(\ker F)$. 
Consequently, $\alg F$ is closed under 
Hausdorff operations.
\hide{
\qquad
\begin{footnotesize}
In fact, $\alg F$ is closed under all 
set-theoretic operations of any arity.
\end{footnotesize}
}
\end{lmm}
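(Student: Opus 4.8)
The plan is to establish the set-theoretic identity first, then read off the algebraic structure from it. For the identity $\alg F=\{A\subseteq X:F^{-1}FA=A\}$, I would prove both inclusions directly. For $\supseteq$, any $A$ with $F^{-1}FA=A$ is visibly of the form $F^{-1}B$ with $B=FA$, hence in $\alg F$. For $\subseteq$, suppose $A=F^{-1}B$ for some $B\subseteq Y$. One always has $A\subseteq F^{-1}FA$ (if $x\in A$ then $F(x)\in FA$), so the work is in the reverse inclusion $F^{-1}FA\subseteq A$. Take $x\in F^{-1}FA$, so $F(x)\in FA$, i.e.\ $F(x)=F(x')$ for some $x'\in A=F^{-1}B$; then $F(x)=F(x')\in B$, so $x\in F^{-1}B=A$. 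This is the crux of the argument, and it is short; the only subtlety is the asymmetry between the two inclusions $A\subseteq F^{-1}FA$ (always) and $F^{-1}FA\subseteq A$ (needs $A\in\alg F$).

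Next I would observe that $\alg F$ is closed under arbitrary unions, intersections, and complements in $\mathscr P(X)$, since pre-images commute with all Boolean operations: $F^{-1}(\bigcup_i B_i)=\bigcup_i F^{-1}B_i$, $F^{-1}(\bigcap_i B_i)=\bigcap_i F^{-1}B_i$, and $F^{-1}(Y\setminus B)=X\setminus F^{-1}B$. Hence $\alg F$ is a complete subalgebra of $\mathscr P(X)$. Moreover the map $B\mapsto F^{-1}B$ restricted to $\mathscr P(\ran F)$ is a bijection onto $\alg F$ (injective because $F^{-1}B=F^{-1}B'$ with $B,B'\subseteq\ran F$ forces $B=B'$ by applying $F$ and using that every point of $B$ is hit), and it preserves all Boolean operations, so $\alg F\cong\mathscr P(\ran F)\cong\mathscr P(Y/\!\!\sim)\cong\mathscr P(\ker F)$, where $\sim$ identifies points with equal image; the atoms of $\alg F$ are exactly the nonempty fibres $F^{-1}\{y\}$, which constitute $\ker F$. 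This identifies $\alg F$ as the complete subalgebra generated by $\ker F$.

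Finally, closure under Hausdorff operations is immediate: a Hausdorff operation $\varPhi$ with base $S$ applied to a family in $\alg F$ produces $\bigcup_{f\in S}\bigcap_{n}A_{f\uhr n}$, which is a countable union of countable intersections of members of $\alg F$, hence lies in $\alg F$ by the completeness just established. (The same reasoning gives closure under set-theoretic operations of any arity, justifying the parenthetical footnote.) I expect no real obstacle here; the one place to be careful is not to conflate $\mathscr P(\ran F)$ with $\mathscr P(Y)$ when $F$ is not surjective — the isomorphism is with power set of the \emph{range}, equivalently of $\ker F$, which is what the statement asserts.
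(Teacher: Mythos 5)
Your proof is correct and supplies exactly the routine verification that the paper dismisses with ``Clear'': the fixed-point characterization $F^{-1}FA=A$, completeness of $\alg F$ because pre-images commute with arbitrary unions, intersections, and complements, and the identification of the atoms with the fibres giving $\alg F\cong\mathscr P(\ker F)$. One small slip: in the last step the union $\bigcup_{f\in S}$ in a Hausdorff operation ranges over an arbitrary $S\subseteq\omega^\omega$ and so need not be countable, but since you have already established closure under \emph{arbitrary} unions this does not affect the argument.
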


\begin{proof}
Clear.
\end{proof}

%\newpage

Given maps $F_i:X\to Y_i$, $i\in I$, 
their {\it diagonal product\/} is the map 
$\btu_{i\in I}F_i:X\to\prod_{i\in I}Y_i$
defined by letting for all $x\in X$,
$$
\btu_{i\in I}F_i(x)=
(F_i(x))_{i\in I}.
$$ 
%The diagonal product of surjections is a~surjection,
%and the diagonal product of maps is an injection 
%whenever so is at least one of them.
The diagonal product of continuous maps~$F_i$ is 
continuous (w.r.t.~the standard product topology on 
$\prod_{i\in I}Y_i$), and moreover, it is perfect 
whenever so is at least one of them, say, $F_j$, 
and the spaces $Y_i$ for all $i\ne j$ are Hausdorff 
(see \cite{Engelking}, Theorem~3.7.9).

\begin{lmm}\label{l: algebra vs diag prod}
If $A\in\alg F_j$ for some $j\in I$, 
then $A\in\alg(\btu_{i\in I}F_i)$.
\end{lmm}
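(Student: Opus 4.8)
The plan is to unwind the definition of $\alg F_j$ from Lemma~\ref{l: algebra of preimages} and then verify directly that $A$ is fixed by $G^{-1}G$, where $G=\btu_{i\in I}F_i$. Concretely, suppose $A\in\alg F_j$, so by Lemma~\ref{l: algebra of preimages} we have $F_j^{-1}F_jA=A$; equivalently, $A$ is a union of fibers of $F_j$, i.e.\ $x\in A$ and $F_j(x)=F_j(x')$ together imply $x'\in A$. I want to conclude $G^{-1}GA=A$, which by the same lemma gives $A\in\alg G$.

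The one containment $A\subseteq G^{-1}GA$ is automatic (it holds for any map). For the reverse, take $x'\in G^{-1}GA$; then $G(x')=G(x)$ for some $x\in A$. But $G(x')=G(x)$ means $(F_i(x'))_{i\in I}=(F_i(x))_{i\in I}$, so in particular $F_j(x')=F_j(x)$. Since $x\in A$ and $A$ is a union of $F_j$-fibers, $x'\in A$. Hence $G^{-1}GA\subseteq A$, so $G^{-1}GA=A$ and $A\in\alg G=\alg(\btu_{i\in I}F_i)$ by Lemma~\ref{l: algebra of preimages}.

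There is really no main obstacle here: the statement is a soft consequence of the fact that the kernel of a diagonal product refines the kernel of each factor (indeed $\ker G$ is the common refinement of all $\ker F_i$), and $\alg$ is monotone with respect to refinement of kernels by Lemma~\ref{l: algebra of preimages}. If one prefers a more structural phrasing, one can simply note $\ker(\btu_{i\in I}F_i)$ refines $\ker F_j$, whence the complete subalgebra generated by $\ker F_j$ is contained in the one generated by $\ker(\btu_{i\in I}F_i)$, i.e.\ $\alg F_j\subseteq\alg(\btu_{i\in I}F_i)$; this is exactly the assertion. Either way the proof is a couple of lines, so I would just write ``Clear.'' or spell out the fiber computation above.
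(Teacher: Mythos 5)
Your proof is correct and is essentially the paper's argument: the paper simply writes the explicit witness, namely if $A=F_j^{-1}B$ then $A=(\btu_{i\in I}F_i)^{-1}\bigl(B\times\prod_{i\in I\setminus\{j\}}Y_i\bigr)$, while you verify the equivalent saturation condition $G^{-1}GA=A$ via the fact that $\ker(\btu_{i\in I}F_i)$ refines $\ker F_j$ — the same observation in a different dress. No gap; either formulation suffices.
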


\begin{proof} 
Let $F=\btu_{i\in I}F_i$. 
If $A=F^{-1}_jB$ for some $B\subseteq Y_j$ then 
$A=F^{-1}(B\times\prod_{i\in I\setminus\{j\}}Y_i)$. 
\end{proof}

%It follows that whenever $A_i\in\mathscr A(F_i)$ for 
%each $i\in I$, then $A_i\in\mathscr A(\btu_{i\in I}F_i)$ 
%for all $i\in I$ simultaneously. 

As usual, a~class~$\mathscr Y$ of topological 
spaces is {\em closed under $\kappa$~products} iff 
$(Y_\alpha)_{\alpha<\kappa}\in\mathscr Y^\kappa$ 
implies $\prod_{\alpha<\kappa}Y_\alpha\in\mathscr Y$.
E.g., the class of Polish spaces is closed under 
$\omega$~products, the class of spaces of density 
$\lambda\ge\omega$ is closed under $2^\lambda$~products 
(see \cite{Engelking}, 2.3.15), and $\mathscr K$~is 
closed under arbitrary products. Similarly, 
a~class~$\mathscr M$ of maps is {\em closed under 
$\kappa$~diagonal products} iff 
$(F_\alpha)_{\alpha<\kappa}\in\mathscr M^\kappa$ 
implies $\btu_{\alpha<\kappa}F_\alpha\in\mathscr M$. 
E.g., the classes of continuous and of perfect maps
are closed under arbitrary products.

%\newpage

\begin{prop}\label{p: algebra vs diag prod}
Let $\mathscr Y$ be closed under $\kappa$~products, 
$\mathscr M$ a~class of maps closed under 
$\kappa$~diagonal products, and let $\mathscr S$ be
such that for any $S\in\mathscr S(X)$ there exist 
$Y\in\mathscr Y$ and $F\in\mathscr M\cap Y^X$ 
such that $S\in\alg F$. Then for any 
$(S_\alpha)_{\alpha<\kappa}\in\mathscr S(X)^\kappa$
there exist $Y\in\mathscr Y$ and $F\in\mathscr M\cap Y^X$ 
such that $S_\alpha\in\alg F$ for all $\alpha<\kappa$. 
Consequently, $\varPhi(S_\alpha)_{\alpha<\kappa}\in\alg F$
for any (even $\kappa$-ary) Hausdorff operation~$\varPhi$. 
%Moreover, the same remains true for every 
%$\kappa$~many such Hausdorff operations.
\end{prop}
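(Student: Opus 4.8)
The plan is to assemble this directly from the pieces already in hand: Lemma~\ref{l: algebra of preimages}, Lemma~\ref{l: algebra vs diag prod}, and the two closure hypotheses on $\mathscr Y$ and $\mathscr M$. First I would, for each $\alpha<\kappa$, invoke the hypothesis on $\mathscr S$ to fix a space $Y_\alpha\in\mathscr Y$ and a map $F_\alpha\in\mathscr M\cap Y_\alpha^{X}$ with $S_\alpha\in\alg F_\alpha$. Then I would set $Y=\prod_{\alpha<\kappa}Y_\alpha$ and $F=\btu_{\alpha<\kappa}F_\alpha:X\to Y$. Since $\mathscr Y$ is closed under $\kappa$~products we get $Y\in\mathscr Y$, and since $\mathscr M$ is closed under $\kappa$~diagonal products we get $F\in\mathscr M$; hence $F\in\mathscr M\cap Y^{X}$, as required.

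Next I would verify that every $S_\alpha$ lies in $\alg F$. This is precisely Lemma~\ref{l: algebra vs diag prod} with index set $I=\kappa$ and $j=\alpha$: from $S_\alpha\in\alg F_\alpha$ and $F=\btu_{\alpha<\kappa}F_\alpha$ it follows that $S_\alpha\in\alg(\btu_{\alpha<\kappa}F_\alpha)=\alg F$. So a single map $F$ works simultaneously for all $\alpha<\kappa$, which establishes the first assertion of the proposition.

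For the concluding clause, I would appeal again to Lemma~\ref{l: algebra of preimages}, which tells us that $\alg F$ is a complete subalgebra of $\mathscr P(X)$; being closed under arbitrary unions and intersections, it is closed under every Hausdorff operation, in particular every $\kappa$-ary one. Applying this to the family $(S_\alpha)_{\alpha<\kappa}\subseteq\alg F$ yields $\varPhi(S_\alpha)_{\alpha<\kappa}\in\alg F$.

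There is no substantive obstacle here; the proof is a bookkeeping exercise. The only points that warrant a moment's care are that the product-closure and diagonal-product-closure hypotheses are stated for the very cardinal $\kappa$ that is the length of the given family, so that $Y$ and $F$ are genuinely produced by those hypotheses, and that Lemma~\ref{l: algebra vs diag prod}, formulated for an arbitrary index set $I$, indeed applies with $I=\kappa$.
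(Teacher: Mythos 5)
Your proof is correct and follows essentially the same route as the paper's: choose $Y_\alpha,F_\alpha$ witnessing $S_\alpha\in\alg F_\alpha$, pass to $Y=\prod_{\alpha<\kappa}Y_\alpha$ and $F=\btu_{\alpha<\kappa}F_\alpha$ using the two closure hypotheses, conclude $S_\alpha\in\alg F$ by Lemma~\ref{l: algebra vs diag prod}, and finish with Lemma~\ref{l: algebra of preimages} since $\alg F$ is a complete subalgebra and hence closed under (even $\kappa$-ary) Hausdorff operations. No gaps.
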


\begin{proof} 
For each $\alpha<\kappa$ pick $Y_\alpha\in\mathscr Y$ 
and $F_\alpha\in\mathscr M\cap{Y_\alpha}^X$ with 
$S_\alpha\in\alg F_\alpha$. 
Let $Y=\prod_{\alpha<\kappa}Y_\alpha$ and 
$F=\btu_{\alpha<\kappa}F_\alpha$. Then 
$Y\in\mathscr Y$ since $\mathscr Y$ 
is closed under $\kappa$~products, 
$F\in\mathscr M\cap Y^X$ since $\mathscr M$ 
is closed under $\kappa$~diagonal products,
moreover,
$S_\alpha\in\alg F$ for all $\alpha<\kappa$
by Lemma~\ref{l: algebra vs diag prod}, 
and so $\varPhi(S_\alpha)_{\alpha<\kappa}\in\alg F$ 
by Lemma~\ref{l: algebra of preimages}. 
\hide{
\qquad
\begin{footnotesize}
this uses $\AC_\kappa$.
\end{footnotesize}
} 
\end{proof}

The following 
Proposition~\ref{p: zero sets vs diag prod} 
is essentially a~variant of 
Proposition~\ref{p: algebra vs diag prod}
where we have $\mathscr S=\mathscr Z$, $\kappa=\omega$, 
$\mathscr Y=\{[0,1]^\omega\}$, and $\mathscr M$~consists
of continuous maps witnessing that sets $A_n$ are 
in $\mathscr Z(X)$.

\begin{prop}\label{p: zero sets vs diag prod}
Let $X$~be a~topological space and 
$(A_n)_{n<\omega}\in\mathscr Z(X)^\omega$. 
Then there exists a~continuous map $F:X\to[0,1]^\omega$ 
such that $A_n\in\alg F$ for all $n<\omega$. 
Consequently, $\varPhi(A_n)_{n<\omega}\in\alg F$
for any Hausdorff operation~$\varPhi$. 
%Moreover, the same remains true for every 
%countable set of Hausdorff operations.
\end{prop}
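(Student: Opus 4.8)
The plan is to reduce to Proposition~\ref{p: algebra vs diag prod} by choosing the data there appropriately. First I would recall that, by definition, each $A_n\in\mathscr Z(X)$ is the zero set of some continuous function, i.e.\ there is a continuous $g_n:X\to[0,1]$ with $A_n=g_n^{-1}\{0\}$. The key observation is that such an $A_n$ lies in $\alg g_n$: indeed $g_n^{-1}g_n A_n = g_n^{-1}\{0\} = A_n$, so by Lemma~\ref{l: algebra of preimages} we have $A_n\in\alg g_n$ (here I use that $g_n^{-1}g_n A$ equals $A$ characterizes membership in the algebra of preimages). Thus with $\mathscr S=\mathscr Z$, $\kappa=\omega$, $\mathscr Y=\{[0,1]^\omega\}$ — which is trivially closed under $\omega$~products since $([0,1]^\omega)^\omega\cong[0,1]^\omega$ — and $\mathscr M$ the class of continuous maps into powers of $[0,1]$, which is closed under $\omega$~diagonal products (the diagonal product of continuous maps is continuous w.r.t.\ the product topology, as noted in the excerpt), every hypothesis of Proposition~\ref{p: algebra vs diag prod} is met: each single $A_n$ is in $\alg F$ for a continuous $F=g_n$ with target in $\mathscr Y$.

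Then I would simply invoke Proposition~\ref{p: algebra vs diag prod} with these choices to obtain a continuous $F:X\to[0,1]^\omega$ (more precisely, $F=\btu_{n<\omega}g_n:X\to\prod_{n<\omega}[0,1]=[0,1]^\omega$) such that $A_n\in\alg F$ for all $n<\omega$. The final clause, that $\varPhi(A_n)_{n<\omega}\in\alg F$ for any Hausdorff operation $\varPhi$, is then immediate from Lemma~\ref{l: algebra of preimages}, which asserts that $\alg F$ is closed under Hausdorff operations (being a complete subalgebra of $\mathscr P(X)$).

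Alternatively, and perhaps cleaner for exposition, I would give the direct construction without citing Proposition~\ref{p: algebra vs diag prod}: pick continuous $g_n:X\to[0,1]$ with $A_n=g_n^{-1}\{0\}$ for each $n$, set $F=\btu_{n<\omega}g_n:X\to[0,1]^\omega$, note $F$ is continuous, and observe $A_n=g_n^{-1}\{0\}=F^{-1}(\pi_n^{-1}\{0\})$ where $\pi_n$ is the $n$th projection, so $A_n\in\alg F$ by the description of $\alg F$; then close under $\varPhi$ via Lemma~\ref{l: algebra of preimages}. This is the substance of the remark preceding the proposition that it is a special case of Proposition~\ref{p: algebra vs diag prod}.

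I do not anticipate a genuine obstacle here: the statement is essentially a specialization, and the only point requiring any care is the identification $A_n=F^{-1}B_n$ for a suitable $B_n\subseteq[0,1]^\omega$ (namely $B_n=\pi_n^{-1}\{0\}$), i.e.\ checking $F^{-1}FA_n=A_n$ so that Lemma~\ref{l: algebra of preimages} applies — but this is routine since $FA_n$ is contained in the coordinate slice $\{y:y_n=0\}$ and $g_n$ already detects $A_n$ exactly. No separation, normality, or Tychonoff hypothesis is needed at this stage; those enter only later when $\mathscr Z$ must be controlled across subspaces.
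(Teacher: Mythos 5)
Your proposal is correct and follows essentially the same route as the paper: pick continuous witnesses $F_n:X\to[0,1]$ with $A_n=F_n^{-1}\{0\}$, form the diagonal product $F=\btu_{n<\omega}F_n:X\to[0,1]^\omega$, note $A_n\in\alg F$ (the paper cites Lemma~\ref{l: algebra vs diag prod}; your identification $A_n=F^{-1}(\pi_n^{-1}\{0\})$ is the same computation), and conclude via Lemma~\ref{l: algebra of preimages}. The only cosmetic slip is in your first paragraph, where the maps $g_n$ land in $[0,1]$ rather than in the declared $\mathscr Y=\{[0,1]^\omega\}$, but the paper itself only calls the proposition ``essentially a variant'' of Proposition~\ref{p: algebra vs diag prod}, and your direct construction settles the matter exactly as the paper does.
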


\begin{proof} 
For each $n<\omega$ pick a~continuous 
$F_n:X\to[0,1]$ with $A_n=F^{-1}_n\{0\}$ 
(which is possible since $A_n$ is in~$\mathscr Z(X)$),
and thus $A_n\in\alg F_n$. Then 
$F=\btu_{n<\omega}F_n:X\to[0,1]^\omega$ is continuous, 
$A_n\in\alg F$ by Lemma~\ref{l: algebra vs diag prod}, 
and so $\varPhi(A_n)_{n<\omega}\in\alg F$ 
by Lemma~\ref{l: algebra of preimages}. 
\hide{
\qquad
\begin{footnotesize}
this uses $\AC_\omega$.
\end{footnotesize}
} 
\end{proof}

\hide{
\vskip+1em
\footnotesize

[Can we replace $[0,1]^\omega$ by $[0,1]$? 
[at least, when $X$~is compact so the sets
$F(A_n)\subseteq[0,1]^\omega$ are closed?]
Possibly, yes: try to use $F_n$ such that 
$F^{-1}_n\{0\}=A_n$ and $F:X\to[0,1]$ defined by
$F(x)=\sum_{n\in\omega}2^{-n}\cdot F_n(x)$, 
then $F$~is continuous, but do we get 
$A_n\in\alg F$ for all $n$?] 

\vskip+1em
\normalsize
}

%\newpage

\hide{
\section*{Images vs Hausdorff operations}
}

We turn to the problem of when classes of 
$\mathbf\Phi$-sets are preserved under maps 
in the images direction. Easily, the images 
of a~map~$F$ distributes over (even binary) 
intersections iff $F$~is one-to-one. Below 
we observe that the situation is less trivial 
if we consider intersections of families 
of sets directed by the converse inclusion.

A~map $F:X\to Y$ is {\em finite-to-one} iff 
$\ker F\subseteq\mathscr{P}_\omega(X)$, 
{\em closed-to-one} iff $\ker F\subseteq\mathscr{F}(X)$, 
and 
{\em compact-to-one} iff $\ker F\subseteq\mathscr{K}(X)$.

Trivially, any finite-to-one or proper~$F$ is 
compact-to-one. Also, any continuous closed 
compact-to-one~$F$ is perfect, and if $X$~is compact 
and $Y$~is Hausdorff then any continuous~$F$ is perfect 
(see, e.g., \cite{Engelking}, 3.2.7 and 3.1.12).

\hide{
\vskip+1em
\begin{footnotesize}
[It seems, now the following is superfluous:]
Let us also say that $F$~is {\em compact-to-one on 
compact sets} iff $F^{-1}\{y\}\cap A\in\mathscr K(X)$ 
for all $y\in Y$ and $A\in\mathscr K(X)$. Clearly,
if $X$~is Hausdorff then any compact-to-one~$F$ is 
compact-to-one on compact sets.

[A~more general terminology: 
$F:X\to Y$ is {\em $\mathscr S$-to-one} iff 
$\ker F\subseteq\mathscr S$, and {\em $\mathscr S$-to-one
on~$\mathscr T$} iff $A\cap B\in\mathscr S$ for all 
$A\in\ker F$ and $B\in\mathscr T$.]
\end{footnotesize}

\vskip+1em
}

Given a~partially ordered set $(I,\le)$, we shall say that 
a~family $(A_i)_{i\in I}$ of sets is {\em decreasing} 
iff $A_i\supseteq A_j$ for all $i\le j$. Considering 
below $\omega$ and $\omega^{<\omega}$ as sets of indices, 
we imply the natural ordering (i.e., by inclusion) of 
each of them.

The following result provides conditions under which
images distribute over intersections of directed 
decreasing families.

%\newpage 

\begin{prop}\label{p: intersection vs image}
Let $F:X\to Y$. The equality
$
F\bigcap_{i\in I}A_i
=\bigcap_{i\in I}FA_i
$
holds for all directed $(I,\le)$ and
\begin{itemize}
\item[(i)]
all decreasing $(A_i)_{i\in I}$ in $\mathscr P(X)$
%iff ?
if $F$~is finite-to-one,
\item[(ii)]
all decreasing $(A_i)_{i\in I}$ 
in $(\mathscr F\cap\mathscr K)(X)$
%iff ? 
if $F$~is closed-to-one.
\end{itemize}
\end{prop}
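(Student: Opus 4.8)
The plan is to treat both cases uniformly up to the very last step. The inclusion $F\bigcap_{i\in I}A_i\subseteq\bigcap_{i\in I}FA_i$ requires no hypothesis at all: if $y=F(x)$ with $x\in\bigcap_{i}A_i$, then $y\in FA_i$ for every~$i$. For the reverse inclusion, fix $y\in\bigcap_{i\in I}FA_i$ and set $C=F^{-1}\{y\}\in\ker F$. Then ``$y\in FA_i$'' says precisely that $C\cap A_i\neq\emptyset$ for each~$i$, while ``$y\in F\bigcap_{i}A_i$'' is equivalent to $\bigcap_{i\in I}(C\cap A_i)=C\cap\bigcap_{i}A_i\neq\emptyset$. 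So everything reduces to showing that a directed decreasing family $(C\cap A_i)_{i\in I}$ of \emph{nonempty} sets has nonempty intersection. I would first note that, ordered by inclusion, the family $\{C\cap A_i:i\in I\}$ is downward directed: given $i,j$, pick $k$ with $k\ge i$ and $k\ge j$, so $C\cap A_k\subseteq C\cap A_i\cap A_j$; consequently it has the finite intersection property.

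For~(i): $F$ finite-to-one means $C$ is finite, so $\{C\cap A_i:i\in I\}\subseteq\mathscr P(C)$ is a finite, downward-directed poset; hence it has a least element, which is some nonempty $C\cap A_{i_0}$ contained in every $C\cap A_i$, whence $\bigcap_{i\in I}(C\cap A_i)=C\cap A_{i_0}\neq\emptyset$. No topology is used here.

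For~(ii): $F$ closed-to-one means $C\in\mathscr F(X)$. Since each $A_i\in\mathscr K(X)$, the set $C\cap A_i$ is a closed subspace of the compact space $A_i$, hence itself compact, and this holds in an arbitrary topological space. Fixing any $i_0\in I$ (directed sets being nonempty), the family $\{C\cap A_i:i\ge i_0\}$ consists of nonempty closed subsets of the compact space $C\cap A_{i_0}$ and has the finite intersection property, so $\bigcap_{i\ge i_0}(C\cap A_i)\neq\emptyset$; by directedness of~$I$ this intersection coincides with $\bigcap_{i\in I}(C\cap A_i)$.

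I do not expect a genuine obstacle here; the points to watch are that directedness of $(I,\le)$ --- not linearity --- already yields the finite intersection property, that a closed subset of a compact set is compact with \emph{no} separation axiom (consistent with the proposition hypothesizing none), and the degenerate convention that $I\neq\emptyset$. It is also worth recording that the two cases are genuinely independent: (i) constrains the fibres of $F$ but not the sets $A_i$ and uses no topology, whereas (ii) constrains the $A_i$ to be compact and closed rather than the fibres to be finite.
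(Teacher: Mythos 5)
Your argument is correct and follows essentially the same route as the paper's: both reduce $y\in F\bigcap_{i\in I}A_i$ to the nonemptiness of $\bigcap_{i\in I}\bigl(F^{-1}\{y\}\cap A_i\bigr)$, handle (i) by finiteness of the fibre (you via a least element of the finite downward-directed trace family, the paper via a contradiction using directedness of the $A_i$), and handle (ii) by the finite intersection property of closed sets inside a compact member obtained by passing to indices above a fixed $i_0$ (the paper phrases this as assuming w.l.o.g.\ that $I$ has a least element). Your closing remarks on directedness, the absence of separation axioms, and $I\neq\emptyset$ match the paper's implicit conventions, so there is nothing to correct.
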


\hide{
\footnotesize

[In (i) we have ``iff'' in fact. 
In (ii) some doubts about ``iff'' arise\ldots
double check!!!]

\vskip+0.5em

[Also: perhaps, there should be a~statement 
general for items (i) and~(ii)?]

\vskip+1em
\normalsize
}

\begin{proof}
Since the inclusion
$
F\bigcap_{i\in I}A_i
\subseteq\bigcap_{i\in I}FA_i
$
holds always, we prove the converse inclusion.

(i). 
If $F$~is finite-to-one, let $(I,\le)$~be a~directed 
set and $(A_i)_{i\in I}$ a~family of nonempty sets 
such that $A_i\supseteq A_j$ if $i\le j$. Fix any 
$y\in\bigcap_{i\in I}FA_i$, i.e., $y$ such that 
$F^{-1}\{y\}\cap A_i\ne\emptyset$ for all $i\in I$, 
and show that $y\in F\bigcap_{i\in I}A_i$, i.e., 
that $F^{-1}\{y\}\cap\bigcap_{i\in I}A_i\ne\emptyset$. 
Since $F$~is finite-to-one, $|F^{-1}\{y\}|<\omega$, 
say, $F^{-1}\{y\}=\{x_k\}_{k<n}$ for some $n<\omega$. 
Toward a~contradiction, assume 
$F^{-1}\{y\}\cap\bigcap_{i\in I}A_i=\emptyset$, 
so for any $k<n$ there is $i_k\in I$ such that 
$x\notin A_{i_k}$. Since $(A_i)_{i\in I}$ is decreasing,
and so $\supseteq$-directed, there exists $i\in I$ 
such that $A_i\subseteq\bigcap_{k<n}A_{i_k}$. But then 
for every $k<n$ we have $x_k\notin A_i$, thus showing 
$F^{-1}\{y\}\cap A_i\ne\emptyset$; a~contradiction.

\hide{
\vskip+1em
\footnotesize

If $F$~is not finite-to-one, there exists $y\in Y$ 
with infinite $F^{-1}\{y\}$. Letting 
$(I,\le)=(\mathscr P_\omega(F^{-1}\{y\}),\supseteq)$ 
and $A_i=F^{-1}\{y\}\setminus i$ for each $i\in I$, 
we see that $(A_i)_{i\in I}$ is decreasing, 
$F\bigcap_{i\in I}A_i$ is empty while 
$\bigcap_{i\in I}FA_i$ is not.

\vskip+1em
\normalsize
}

(ii). 
If $F$~is closed-to-one, let $(I,\le)$~be a~directed set 
and $(A_i)_{i\in I}$ a~family of nonempty closed compact 
sets such that $A_i\supseteq A_j$ if $i\le j$. W.l.g.~let 
$I$~have a~least element, say,~$j$ (otherwise pick any 
$j\in I$ and consider $\{i\in I:i\ge j\}$ instead of~$I$),
hence $(A_i)_{i\in I}$ has the largest set~$A_j$.
If $y\in\bigcap_{i\in I}FA_i$ then the intersections 
$B_i=F^{-1}\{y\}\cap A_i$ are nonempty for all $i\in I$. 
Moreover, $B_i$~are closed subsets of the compact set~$A_j$ 
(since $F$~is closed-to-one and $A_i$~are closed) and form 
a~$\supseteq$-directed family (as $B_i$~are nonempty 
and $A_i$~form a~$\supseteq$-directed family). 
Any $\supseteq$-directed family of nonempty closed subsets 
of a~compact set has a~nonempty intersection, so pick 
an $x\in\bigcap_{i\in I}B_i$. We have 
$x\in F^{-1}\{y\}\cap\bigcap_{i\in I} A_i$ 
and hence $y\in F\bigcap_{i\in I}A_i.$
\hide{
\vskip+1em
\footnotesize

[old:]
If $F$~is not compact-to-one on compact sets, 
then there exist $y\in Y$ and $A\in\mathscr K(X)$ 
such that $F^{-1}\{y\}\cap A\notin\mathscr K(X)$.
Therefore there is a~$\supseteq$-directed family 
$(C_i)_{i\in I}$ of closed subsets of~$X$ such that sets 
$F^{-1}\{y\}\cap A\cap C_i$ are nonempty for all $i\in I$, 
but their intersection is empty. Let $A_i=A\cap C_i$. 
The~$A_i$ are compact. Moreover, $y\in\bigcap_{i\in I}FA_i$ 
however $y\notin F\bigcap_{i\in I}A_i.$ Indeed, 
$y\in F\bigcap_{i\in I}A_i$ iff 
$F^{-1}\{y\}\cap\bigcap_{i\in I}A_i\ne\emptyset$ iff
$F^{-1}\{y\}\cap\bigcap_{i\in I}(A\cap C_i)\ne\emptyset$ 
iff $\bigcap_{i\in I}(F^{-1}\{y\}\cap A\cap C_i)\ne\emptyset$, 
which is wrong.
\qquad[correct, then double check!!!]
\normalsize
}
\end{proof}

Clearly, in item~(ii) if $X$ is Hausdorff,
we may write just $\mathscr K(X)$ instead of 
$(\mathscr F\cap\mathscr K)(X)$. Also, if we 
consider only countable intersections (as they 
appear in $\omega$-ary Hausdorff operations), 
it suffices to assume only that $A_i$ are 
countably compact.

\hide{
\vskip+1em
\footnotesize

\begin{rmk}
Minor modifications of the proof above allow 
to state more general facts: the equality holds if 
$F$~is $(<\kappa)$-to-one and $(A_i)_{i\in I}$ is 
$\kappa$-directed by inverse inclusion (i.e., any 
$J\subseteq I$ of size~$<\kappa$ has an upper bound); 
and if $F$~is finally-$\kappa$-compact-to-one, 
$(A_i)_{i\in I}$ is $\kappa$-directed by inverse 
inclusion and $A_i$~are finally-$\kappa$-compact (i.e., 
any open covering includes a~subcovering of size~$<\kappa$). 
\qquad[also correct this on final $\kappa$-compactness!!!]
\end{rmk}

\vskip+1em
\normalsize
}

%\newpage 

\begin{lmm}\label{l: hausd vs image}
Let $\varPhi$~be a~Hausdorff operation and $F:X\to Y$.
The equality
$
F\varPhi(A_s)_{s\in\omega^{<\omega}}=
\varPhi(FA_s)_{s\in\omega^{<\omega}}
$ 
holds
\begin{itemize}
\item[(i)] 
for all decreasing $(A_s)_{s\in\omega^{<\omega}}$ 
in $\mathscr P(X)$ 
if $F$~is finite-to-one,
\item[(ii)] 
for all decreasing $(A_s)_{s\in\omega^{<\omega}}$ 
in $(\mathscr F\cap\mathscr K)(X)$ 
if $F$~is closed-to-one.
\end{itemize}
\end{lmm}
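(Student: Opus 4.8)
The plan is to reduce the statement directly to Proposition~\ref{p: intersection vs image}, exploiting that the intersections occurring in a Hausdorff operation run over chains. First I would fix a base $S\subseteq\omega^\omega$ of~$\varPhi$, so that $\varPhi(A_s)_{s\in\omega^{<\omega}}=\bigcup_{f\in S}\bigcap_{n\in\omega}A_{f\uhr n}$. Since images always distribute over arbitrary unions,
\[
F\varPhi(A_s)_{s\in\omega^{<\omega}}=F\bigcup_{f\in S}\bigcap_{n\in\omega}A_{f\uhr n}=\bigcup_{f\in S}F\bigcap_{n\in\omega}A_{f\uhr n},
\]
so it would suffice to prove $F\bigcap_{n\in\omega}A_{f\uhr n}=\bigcap_{n\in\omega}FA_{f\uhr n}$ for each individual $f\in S$.

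Next, fixing such an $f$, the map $n\mapsto f\uhr n$ is order-preserving from $(\omega,\le)$ into $(\omega^{<\omega},\subseteq)$, so, as $(A_s)_{s\in\omega^{<\omega}}$ is decreasing by hypothesis, the subfamily $(A_{f\uhr n})_{n\in\omega}$ is decreasing and indexed by the directed (indeed linearly ordered) set~$\omega$. In case~(i) the sets $A_{f\uhr n}$ lie in $\mathscr P(X)$ and $F$~is finite-to-one, so Proposition~\ref{p: intersection vs image}(i) gives the required equality; in case~(ii) they lie in $(\mathscr F\cap\mathscr K)(X)$ and $F$~is closed-to-one, so Proposition~\ref{p: intersection vs image}(ii) does. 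Substituting back, and using that by the definition of~$\varPhi$ applied to the family $(FA_s)_{s\in\omega^{<\omega}}$ one has $\varPhi(FA_s)_{s\in\omega^{<\omega}}=\bigcup_{f\in S}\bigcap_{n\in\omega}FA_{f\uhr n}$, I would conclude $F\varPhi(A_s)_{s\in\omega^{<\omega}}=\varPhi(FA_s)_{s\in\omega^{<\omega}}$.

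I do not expect a genuine obstacle: the whole content is the observation that a Hausdorff operation is a union of intersections taken along the chains $\{f\uhr n:n\in\omega\}$, so Proposition~\ref{p: intersection vs image} applies to each such chain with $I=\omega$. The only points needing a moment's care are checking that the ``decreasing'' hypothesis on $(A_s)_{s\in\omega^{<\omega}}$ transfers to every subfamily $(A_{f\uhr n})_{n\in\omega}$ --- it does, since $n\mapsto f\uhr n$ is increasing --- and, for case~(ii), remembering that Proposition~\ref{p: intersection vs image}(ii) requires the intersected sets to be closed and compact in~$X$, exactly as in hypothesis~(ii). (Since only countably many sets are intersected here, countable compactness of the $A_{f\uhr n}$ would already be enough, by the remark following Proposition~\ref{p: intersection vs image}.)
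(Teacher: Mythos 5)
Your proposal is correct and follows essentially the same route as the paper: fix a base $S$, use that images distribute over arbitrary unions, and invoke Proposition~\ref{p: intersection vs image} (i) or (ii) for each decreasing chain $(A_{f\uhr n})_{n\in\omega}$. The only difference is that you spell out explicitly why each such chain is a decreasing family over the directed set $\omega$, which the paper leaves implicit.
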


\hide{ 
\footnotesize
[Does the dual operation~$\varPhi^*$ 
give the same for co-compacts? check!]
\normalsize
}

\begin{proof} 
Let $S\subseteq\omega^\omega$ be a~base of~$\varPhi$. 
Since the images of $F$ distribute over arbitrary unions 
and, by Proposition~\ref{p: intersection vs image}, over 
intersections of decreasing families of (closed compact) 
sets if $F$~is finite-to-one (closed-to-one), we have
\begin{align*}
F\varPhi(A_s)_{s\in\omega^{<\omega}}=
F\bigcup_{f\in S}\bigcap_{n\in\omega}A_{f\uhr n}
&=
\bigcup_{f\in S}F\bigcap_{n\in\omega}A_{f\uhr n}
\\
&=
\bigcup_{f\in S}\bigcap_{n\in\omega}FA_{f\uhr n}
=
\varPhi(FA_s)_{s\in\omega^{<\omega}},
\end{align*}
as required. 
\end{proof}

\begin{coro}\label{c: hausd vs image}
Let $\varPhi$~be a~Hausdorff operation, $F:X\to Y$, 
and $\mathscr S$~a~class of sets such that
$\mathscr S(X)$~is closed under finite intersections 
and $F$ preserves~$\mathscr S$, i.e.,
$F\mathscr S(X)\subseteq\mathscr S(Y)$.
Then $F$ preserves~$\varPhi(\mathscr S)$, i.e.,
$F\varPhi(\mathscr S,X)\subseteq\varPhi(\mathscr S,Y)$,
whenever
\begin{itemize}
\item[(i)] 
$F$ is finite-to-one, or
\item[(ii)] 
$F$ is closed-to-one and 
$\mathscr S(X)\subseteq(\mathscr F\cap\mathscr K)(X)$.
\end{itemize}
\end{coro}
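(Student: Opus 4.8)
The plan is to deduce this from Lemma~\ref{l: hausd vs image} after replacing an arbitrary generating family by a decreasing one. Fix $B\in\varPhi(\mathscr S,X)$, say $B=\varPhi(A_s)_{s\in\omega^{<\omega}}$ with $(A_s)_{s\in\omega^{<\omega}}$ a family in $\mathscr S(X)$ indexed by finite sequences via the fixed bijection between $\omega$ and $\omega^{<\omega}$; the goal is to show $FB\in\varPhi(\mathscr S,Y)$.

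First I would pass to the ``regularization'' $A'_s=\bigcap_{t\subseteq s}A_t$ for each $s\in\omega^{<\omega}$, the intersection ranging over all initial segments $t$ of $s$ (including $s$ and the empty sequence). Each such intersection is finite, so since $\mathscr S(X)$ is closed under finite intersections we get $A'_s\in\mathscr S(X)$ for every $s$; under hypothesis~(ii) this yields moreover $A'_s\in(\mathscr F\cap\mathscr K)(X)$, because $\mathscr S(X)\subseteq(\mathscr F\cap\mathscr K)(X)$. By construction $(A'_s)_{s\in\omega^{<\omega}}$ is decreasing (if $s\subseteq s'$ then $A'_{s'}\subseteq A'_s$), and for every $f\in\omega^\omega$,
\[
\bigcap_{n\in\omega}A'_{f\uhr n}=\bigcap_{n\in\omega}\bigcap_{m\le n}A_{f\uhr m}=\bigcap_{n\in\omega}A_{f\uhr n},
\]
so $\varPhi(A'_s)_{s\in\omega^{<\omega}}=\varPhi(A_s)_{s\in\omega^{<\omega}}=B$.

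Now I would invoke Lemma~\ref{l: hausd vs image}: under~(i) the family $(A'_s)$ is a decreasing family in $\mathscr P(X)$ and $F$ is finite-to-one, and under~(ii) it is a decreasing family in $(\mathscr F\cap\mathscr K)(X)$ and $F$ is closed-to-one, so in either case
\[
FB=F\varPhi(A'_s)_{s\in\omega^{<\omega}}=\varPhi(FA'_s)_{s\in\omega^{<\omega}}.
\]
Since $F$ preserves $\mathscr S$ and each $A'_s\in\mathscr S(X)$, we have $FA'_s\in\mathscr S(Y)$ for all $s$, hence $\varPhi(FA'_s)_{s\in\omega^{<\omega}}\in\varPhi(\mathscr S,Y)$, and therefore $FB\in\varPhi(\mathscr S,Y)$, as required.

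The only substantive point is the passage to the decreasing family $(A'_s)$, which is exactly what makes Lemma~\ref{l: hausd vs image} applicable and which is where closure of $\mathscr S(X)$ under finite intersections is genuinely needed; the remaining steps are a direct chaining of the cited lemmas, so I expect no real obstacle beyond keeping the indexing by finite sequences straight.
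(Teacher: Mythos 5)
Your proposal is correct and is essentially the paper's own proof: the paper makes exactly the same regularization, replacing $(A_s)$ by the decreasing family $B_{f\uhr n}=\bigcap_{k\le n}A_{f\uhr k}$ (your $A'_s=\bigcap_{t\subseteq s}A_t$), noting that closure under finite intersections keeps it in $\mathscr S(X)$ (and in $(\mathscr F\cap\mathscr K)(X)$ in case (ii)), and then applies Lemma~\ref{l: hausd vs image}. Your verification that the branch intersections, and hence the value of $\varPhi$, are unchanged is the same observation the paper leaves implicit.
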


\begin{proof}
If $\mathscr S(X)$ is closed under finite intersections,
then every $(A_s)_{s\in\omega^{<\omega}}$ in $\mathscr S(X)$ 
can be replaced with a~decreasing 
$(B_s)_{s\in\omega^{<\omega}}$ in $\mathscr S(X)$ so that
$$
\mathbf\Phi(A_s)_{s\in\omega^{<\omega}}=
\mathbf\Phi(B_s)_{s\in\omega^{<\omega}}
$$
by letting $B_{f\uhr n}=\bigcap_{k\le n}A_{f\uhr k}$.
Now the claim follows from Lemma~\ref{l: hausd vs image}.
\end{proof}

E.g., as each of $\mathscr F,\mathscr G,\mathscr Z$ 
on a~given~$X$, as well as $\mathscr K$ on Hausdorff 
spaces~$X$, is closed under finite intersections, 
we see: if $F$~is closed and finite-to-one then it 
preserves $\varPhi(\mathscr F)$; if $F$~is open and 
finite-to-one then it preserves $\varPhi(\mathscr G)$;
if $X,Y$ are Hausdorff, $X$~is compact (and so normal), 
and $F$~is continuous (and so perfect), then $F$ 
preserves $\varPhi(\mathscr S)$ where $\mathscr S$ 
is each of $\mathscr F,\mathscr G,\mathscr Z,\mathscr K$.

%\newpage

\hide{
\section*{Reduction, separation}
}

Now we combine our previous results to transfer 
the reduction and separation properties in the 
pre-image direction.

\begin{prop}\label{p: preimage vs red}
Let $\varPhi$ be a~Hausdorff operation 
and $\mathscr S$~a~class such that for any 
$(A_n,B_n)_{n\in\omega}$ in $\mathscr S(X)$
there are $Y$ and $F:X\to Y$ such that 
\begin{itemize}
\item[(a)] 
$F^{-1}$ preserves~$\mathscr S$, 
\item[(b)] 
$(A_n,B_n)_{n\in\omega}$ is in $\alg F$, 
and 
\item[(c)] 
$
F\varPhi(A_n)_{n\in\omega},
F\varPhi(B_n)_{n\in\omega}
$ 
are reduced (separated) by %some
sets in $\varPhi(\mathscr S,Y)$.
\end{itemize}
Then $\varPhi(\mathscr S,X)$ has 
the reduction (separation) property.
\end{prop}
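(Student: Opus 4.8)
The plan is to run the argument entirely inside a single space $Y$ equipped with a single map $F\colon X\to Y$ that simultaneously handles a countable supply of pairs, so that the reduction (or separation) witnesses manufactured downstairs in $Y$ can be pulled back upstairs into $X$. Fix $A,B\in\varPhi(\mathscr S,X)$; write $A=\varPhi(A_n)_{n\in\omega}$ and $B=\varPhi(B_n)_{n\in\omega}$ with $(A_n)_{n\in\omega},(B_n)_{n\in\omega}$ in $\mathscr S(X)$. Apply the hypothesis to the combined sequence $(A_n,B_n)_{n\in\omega}$ to obtain $Y$ and $F\colon X\to Y$ satisfying (a), (b), (c). By (b) every $A_n$ and $B_n$ lies in $\alg F$, and since $\alg F$ is closed under Hausdorff operations (Lemma~\ref{l: algebra of preimages}), so do $A=\varPhi(A_n)_{n\in\omega}$ and $B=\varPhi(B_n)_{n\in\omega}$; that is, $F^{-1}F A=A$ and $F^{-1}F B=B$.

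Next I use (c): in the reduction case there are $C',D'\in\varPhi(\mathscr S,Y)$ reducing $FA$ and $FB$, i.e.\ $C'\subseteq FA$, $D'\subseteq FB$, $C'\cap D'=\emptyset$, and $C'\cup D'=FA\cup FB$. Put $C=F^{-1}C'$ and $D=F^{-1}D'$. By (a), $F^{-1}$ preserves $\mathscr S$, so by Corollary~\ref{c: hausd vs preimage} it preserves $\varPhi(\mathscr S)$; hence $C,D\in\varPhi(\mathscr S,X)$. It remains to check $C,D$ reduce $A,B$. From $C'\subseteq FA$ we get $C=F^{-1}C'\subseteq F^{-1}FA=A$, and likewise $D\subseteq B$; since pre-images commute with intersection and $C'\cap D'=\emptyset$ we get $C\cap D=F^{-1}(C'\cap D')=\emptyset$; and since pre-images commute with union, $C\cup D=F^{-1}(C'\cup D')=F^{-1}(FA\cup FB)=F^{-1}FA\cup F^{-1}FB=A\cup B$, the last two equalities again by $A,B\in\alg F$. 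Thus $\varPhi(\mathscr S,X)$ has the reduction property.

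In the separation case, assume $A\cap B=\emptyset$; then $FA$ and $FB$ need not be disjoint, but $F^{-1}(FA\cap FB)=F^{-1}FA\cap F^{-1}FB=A\cap B=\emptyset$, so I may as well note that a separating set is what (c) provides: there is $C'\in\varPhi(\mathscr S,Y)\cap{-\varPhi(\mathscr S,Y)}=\mathbf\Delta(\varPhi(\mathscr S),Y)$ with $FA\subseteq C'$ and $FB\cap C'=\emptyset$. Set $C=F^{-1}C'$. Since $F^{-1}$ preserves $\mathscr S$ and (applied to the dual operation, or simply because $F^{-1}$ commutes with complementation relative to $X$ and $Y$) preserves both $\varPhi(\mathscr S)$ and its dual, $C\in\mathbf\Delta(\varPhi(\mathscr S),X)$. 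Then $A=F^{-1}FA\subseteq F^{-1}C'=C$ and $B\cap C=F^{-1}(FB)\cap F^{-1}C'=F^{-1}(FB\cap C')=\emptyset$, using $B=F^{-1}FB$. So $C$ separates $A,B$, and $\varPhi(\mathscr S,X)$ has the separation property.

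The only delicate point is the separation case's claim that $F^{-1}$ sends $\mathbf\Delta(\varPhi(\mathscr S),Y)$ into $\mathbf\Delta(\varPhi(\mathscr S),X)$: this needs that $F^{-1}$ preserves not just $\varPhi(\mathscr S)$ but also the complementary class $-\varPhi(\mathscr S)=(-\varPhi)(\mathscr S)$, which follows from the same Corollary~\ref{c: hausd vs preimage} applied to $-\varPhi$ together with the identity $F^{-1}(Y\setminus E)=X\setminus F^{-1}E$. Everything else is the routine Boolean bookkeeping that $F^{-1}$ commutes with unions, intersections and complements and that $F^{-1}FE=E$ exactly for $E\in\alg F$ — the content supplied by (b). No compactness or topology is used in this proposition itself; the work of arranging (a)–(c) is deferred to the later theorem.
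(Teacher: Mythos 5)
Your proof is correct and follows essentially the same route as the paper's: pull the reducing (separating) sets back along $F^{-1}$, using hypothesis (a) with Corollary~\ref{c: hausd vs preimage} to stay in $\varPhi(\mathscr S,X)$ and hypothesis (b) with Lemma~\ref{l: algebra of preimages} to get $F^{-1}FA=A$ and $F^{-1}FB=B$. You also spell out the separation case (including that the pulled-back set lands in $\mathbf\Delta(\varPhi(\mathscr S),X)$), which the paper leaves implicit after ``Prove, e.g., reduction.''
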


\begin{proof}
Prove, e.g., reduction. 
Pick any $A,B$ in $\varPhi(\mathscr S,X)$ and 
$(A_n)_{n\in\omega}$, $(B_n)_{n\in\omega}$ 
in $\mathscr S(X)$ such that
$A=\varPhi(A_n)_{n\in\omega}$, 
$B=\varPhi(B_n)_{n\in\omega}$.
Let $Y$ and $F:X\to Y$ be such that 
$F^{-1}$ preserves~$\mathscr S$, 
all the sets $A_n,B_n$ are in $\alg F$, 
and the sets $FA=F\varPhi(A_n)_{n\in\omega}$,
$FB=F\varPhi(B_n)_{n\in\omega}$ are reduced 
by some sets in $\varPhi(\mathscr S,Y)$, 
i.e., there exist $C,D$ in $\varPhi(\mathscr S,Y)$ 
such that 
$$
C\subseteq FA,\;\;
D\subseteq FB,\;\;
C\cap D=\emptyset,
\;\;\text{and}\;\;
C\cup D=(FA)\cup(FB).
$$
As $F^{-1}$ preserves~$\mathscr S$, 
it preserves $\varPhi(\mathscr S)$ by 
Corollary~\ref{c: hausd vs preimage}, so 
$F^{-1}C,F^{-1}D$ are in $\varPhi(\mathscr S,X)$.
Moreover, we have: 
$$
F^{-1}C\subseteq F^{-1}FA=A 
\text{ and }
F^{-1}C\subseteq F^{-1}FB=B
$$ 
(where the equalities hold
by Lemma~\ref{l: algebra of preimages}
since the $A_n,B_n$ are in $\alg F$ 
and so $A,B$ are also in $\alg F$),
also $F^{-1}C\cap F^{-1}D=\emptyset$, 
and finally,
\begin{align*}
F^{-1}C\cup F^{-1}D=
F^{-1}(C\cup D)
&=
F^{-1}((FA)\cup(FB))
\\
&=
(F^{-1}FA)\cup(F^{-1}FB)=
A\cup B
\end{align*}
(as pre-images distribute over unions and 
$A,B$ are in $\alg F$). This proves reduction 
in $\varPhi(\mathscr S,X)$, as required. 
\end{proof}

\begin{prop}\label{p: image preimage vs red}
Let $\varPhi$ be a~Hausdorff operation and 
$\mathscr S$~a~class of sets. %, and $X$~a~space. 
\begin{itemize}
\item[(i)] 
If $\mathscr S(X)$ is closed under finite intersections 
and such that 
for any $(A_n)_{n\in\omega}$ in $\mathscr S(X)$
there are $Y$ and a~finite-to-one $F:X\to Y$ such that 
\begin{itemize}
\item[(a)] 
$F$ and $F^{-1}$ preserve~$\mathscr S$, 
\item[(b)] 
$(A_n)_{n\in\omega}$ is in $\alg F$, and 
\item[(c)] 
$\varPhi(\mathscr S,Y)$ has the reduction 
(separation) property, 
\end{itemize}
then $\varPhi(\mathscr S,X)$ 
has the same property.
\item[(ii)] 
The same remains true assuming  
$\mathscr S(X)\subseteq(\mathscr F\cap\mathscr K)(X)$ 
and that maps~$F$ are (not necessarily finite-to-one
but) closed-to-one.
\end{itemize}
\end{prop}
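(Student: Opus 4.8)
The plan is to deduce Proposition~\ref{p: image preimage vs red} directly from the already-established Proposition~\ref{p: preimage vs red}, by checking that the hypotheses of the latter follow from those of the former. So fix a Hausdorff operation $\varPhi$ and a class $\mathscr S$ satisfying the hypotheses of part~(i): $\mathscr S(X)$ is closed under finite intersections, and for every $(A_n)_{n\in\omega}$ in $\mathscr S(X)$ there are $Y$ and a finite-to-one $F:X\to Y$ with (a) $F$ and $F^{-1}$ preserve $\mathscr S$, (b) $(A_n)_{n\in\omega}$ in $\alg F$, and (c) $\varPhi(\mathscr S,Y)$ has the reduction (separation) property. I must produce, for a given double sequence $(A_n,B_n)_{n\in\omega}$ in $\mathscr S(X)$, a space $Y$ and a map $F:X\to Y$ verifying conditions (a)--(c) of Proposition~\ref{p: preimage vs red}.

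The one subtlety is that Proposition~\ref{p: image preimage vs red} speaks of a single sequence $(A_n)_n$ while Proposition~\ref{p: preimage vs red} needs a map handling a double sequence $(A_n,B_n)_n$ simultaneously. This is resolved by simply merging the two sequences into one: apply the hypothesis of part~(i) to the single $\omega$-sequence obtained by interleaving $(A_n)_n$ and $(B_n)_n$ (under any fixed bijection $\omega\cong\omega\sqcup\omega$), getting $Y$ and a finite-to-one $F:X\to Y$ with $F,F^{-1}$ preserving $\mathscr S$, with every $A_n$ and every $B_n$ in $\alg F$, and with $\varPhi(\mathscr S,Y)$ having the reduction (separation) property. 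This gives conditions (a) and (b) of Proposition~\ref{p: preimage vs red} at once. For condition (c), note first that by Lemma~\ref{l: hausd vs image} (applied after replacing the $A_n$ by a decreasing family $B_{f\uhr n}=\bigcap_{k\le n}A_{f\uhr k}$, exactly as in the proof of Corollary~\ref{c: hausd vs image}, which is legitimate since $\mathscr S(X)$ is closed under finite intersections and $F$ is finite-to-one), we have
\[
F\varPhi(A_n)_{n\in\omega}=\varPhi(FA_n)_{n\in\omega}
\quad\text{and}\quad
F\varPhi(B_n)_{n\in\omega}=\varPhi(FB_n)_{n\in\omega}.
\]
Since $F$ preserves $\mathscr S$, all $FA_n,FB_n$ lie in $\mathscr S(Y)$, so $F\varPhi(A_n)_{n\in\omega}$ and $F\varPhi(B_n)_{n\in\omega}$ lie in $\varPhi(\mathscr S,Y)$; as $\varPhi(\mathscr S,Y)$ has the reduction (separation) property, these two sets are reduced (separated) by sets in $\varPhi(\mathscr S,Y)$, which is exactly condition (c). Hence Proposition~\ref{p: preimage vs red} applies and yields that $\varPhi(\mathscr S,X)$ has the reduction (separation) property.

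For part~(ii) the argument is identical word for word, except that the invocation of Lemma~\ref{l: hausd vs image} uses clause~(ii) of that lemma rather than clause~(i): the maps $F$ are now assumed closed-to-one instead of finite-to-one, and the hypothesis $\mathscr S(X)\subseteq(\mathscr F\cap\mathscr K)(X)$ guarantees that the (decreasing families replacing the) sequences $(A_n)_n$, $(B_n)_n$ lie in $(\mathscr F\cap\mathscr K)(X)$, which is what clause~(ii) requires. Everything else—the interleaving trick, the preservation of $\mathscr S$ under $F$ and $F^{-1}$, and the final appeal to Proposition~\ref{p: preimage vs red}—goes through unchanged.

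There is really no hard obstacle here: the proposition is a bookkeeping corollary that packages Lemma~\ref{l: hausd vs image}, Corollary~\ref{c: hausd vs preimage}, and Proposition~\ref{p: preimage vs red} into a convenient form. The only point demanding a moment's care is the passage from a single sequence to a double sequence, handled by the interleaving bijection, together with remembering that one must pass to a decreasing family before applying Lemma~\ref{l: hausd vs image}—but that reduction is already spelled out in the proof of Corollary~\ref{c: hausd vs image} and may simply be cited.
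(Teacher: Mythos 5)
Your proof is correct and follows essentially the same route as the paper: verify the hypotheses of Proposition~\ref{p: preimage vs red}, using the closure of $\mathscr S(X)$ under finite intersections together with Lemma~\ref{l: hausd vs image} (i.e.\ Corollary~\ref{c: hausd vs image}) to see that $F$ carries $\varPhi(\mathscr S,X)$ into $\varPhi(\mathscr S,Y)$, where reduction (separation) is then applied. Your explicit interleaving of $(A_n)_n$ and $(B_n)_n$ into a single sequence is a small bookkeeping point the paper leaves implicit, and is handled correctly.
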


\begin{proof}
Since $\mathscr S(X)$ is closed under 
finite intersections (and is included into 
$(\mathscr F\cap\mathscr K)(X)$ in~(ii)) and 
as $F$ preserves~$\mathscr S$ and is finite-to-one 
(or closed-to-one in~(ii)), it preserves 
$\varPhi(\mathscr S)$ by 
Corollary~\ref{c: hausd vs image}, so 
$FA,FB$ are in $\varPhi(\mathscr S,Y)$, and so 
by reduction in $\varPhi(\mathscr S,Y)$, they are 
reduced by some sets in $\varPhi(\mathscr S,Y)$. 
Now we are in position to apply 
Proposition~\ref{p: preimage vs red}
thus getting the same conclusion. 
\end{proof}

%\newpage

\begin{lmm}\label{l: zero tych}
Let $Y$~be Tychonoff and $X\subseteq Y$. 
Then $\mathscr Z(X)=\mathscr Z(Y)\uhr X$ 
and consequently, 
for any Hausdorff operation~$\varPhi$, 
\begin{itemize}
\item[(i)] 
$\varPhi(\mathscr Z,X)=\varPhi(\mathscr Z,Y)\uhr X$,
\item[(ii)] 
if $\varPhi(\mathscr Z,Y)$ has reduction 
(separation) then $\varPhi(\mathscr Z,X)$
has the same property.
\end{itemize}
\end{lmm}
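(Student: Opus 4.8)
The plan is to prove the key equality $\mathscr Z(X)=\mathscr Z(Y)\uhr X$ first, and then obtain (i) and (ii) as formal consequences of results already established in the excerpt.

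For the inclusion $\mathscr Z(Y)\uhr X\subseteq\mathscr Z(X)$: if $Z\in\mathscr Z(Y)$, pick a continuous $g\colon Y\to[0,1]$ with $Z=g^{-1}\{0\}$; then the restriction $g\uhr X\colon X\to[0,1]$ is continuous and $(g\uhr X)^{-1}\{0\}=Z\cap X$, so $Z\cap X\in\mathscr Z(X)$. This direction needs no separation hypothesis — it holds for any $X\subseteq Y$ and any spaces, so in particular I should note that $\mathscr Z(X)\supseteq\mathscr Z(Y)\uhr X$ always. For the reverse inclusion $\mathscr Z(X)\subseteq\mathscr Z(Y)\uhr X$: given $A\in\mathscr Z(X)$, fix a continuous $f\colon X\to[0,1]$ with $A=f^{-1}\{0\}$. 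The point where Tychonoffness of $Y$ is used is that I want to extend (or at least dominate) $f$ to a continuous function on $Y$. The cleanest route: since $Y$ is Tychonoff it embeds in a cube $[0,1]^\kappa$, say via $e\colon Y\hookrightarrow[0,1]^\kappa$; then $X$ embeds in $[0,1]^\kappa$ via $e\uhr X$, and the composite with $f$ together with the coordinate functions gives an embedding of $X$ into $[0,1]^{\kappa+1}=[0,1]^\kappa\times[0,1]$ whose last coordinate is $f$. Now $A=f^{-1}\{0\}$ is the intersection of $X$ with the zero set $\{p:p(\text{last coord})=0\}$ of $[0,1]^{\kappa+1}$. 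The problem is that I need this zero set to be (the trace of) a zero set of $Y$, not of the cube.

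So instead I would argue more directly. The real content is: \emph{for $X\subseteq Y$ with $Y$ Tychonoff, every continuous $f\colon X\to[0,1]$ extends to a continuous $\bar f\colon Y\to[0,1]$} — this is false in general (it is a property of $C^*$-embeddings), so I must be more careful. Actually the correct and standard fact is weaker and suffices: a zero set of a subspace $X$ of a Tychonoff space $Y$ need not be the trace of a zero set of $Y$ in general — \emph{unless} we use that $Y$ is Tychonoff to realize $X$ inside a cube and that $\mathscr Z$ of a cube restricts correctly. Let me reconsider: take $Z_0=\{t\in[0,1]:t=0\}=\{0\}$, a zero set of $[0,1]$. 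For the embedding $\iota=\btu(e, f)\colon X\to[0,1]^\kappa\times[0,1]$, the set $A=f^{-1}\{0\}$ equals $\iota^{-1}(([0,1]^\kappa\times\{0\}))$, and $[0,1]^\kappa\times\{0\}$ is a zero set of the cube $[0,1]^{\kappa+1}$. Hence $A$ is the trace on $X$ of a zero set of $[0,1]^{\kappa+1}$, i.e. $A\in\mathscr Z([0,1]^{\kappa+1})\uhr X$. Since $Y$ also embeds in $[0,1]^{\kappa+1}$ (via $(e,c)$ for any constant $c$) and that cube is normal, zero sets of the cube trace to zero sets of $Y$; combining, $A\in\mathscr Z(Y)\uhr X$. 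The main obstacle is exactly this bookkeeping: making sure all three spaces $X$, $Y$, and the cube are embedded compatibly so that a single zero set of the cube witnesses $A$ both as a subset of $X$ and as the trace of a zero set of $Y$; one clean way is to fix \emph{one} embedding $h\colon Y\to[0,1]^\kappa$ and then enlarge it to $[0,1]^\kappa\times[0,1]$ using $f$ on $X$ extended arbitrarily-continuously on $Y\setminus X$ — but that extension may not exist, so I should instead push the extra coordinate onto a fresh cube factor indexed by (a basis of neighbourhoods realizing) $f$, using that on the compact metric target $[0,1]$ the map $f$ is determined by countably many continuous real maps which, via Tychonoffness, correspond to coordinate projections of $Y$ already present after possibly enlarging $\kappa$; since $\kappa$ is arbitrary this is harmless.

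Once $\mathscr Z(X)=\mathscr Z(Y)\uhr X$ is established, parts (i) and (ii) are immediate: apply Corollary~\ref{c: hausd in subspace} with $\mathscr S=\mathscr Z$, whose hypothesis $\mathscr S(X)=\mathscr S(Y)\uhr X$ is precisely what we proved. Corollary~\ref{c: hausd in subspace}(i) gives $\varPhi(\mathscr Z,X)=\varPhi(\mathscr Z,Y)\uhr X$, which is (i); and Corollary~\ref{c: hausd in subspace}(ii) gives that reduction (separation) of $\varPhi(\mathscr Z,Y)$ transfers to $\varPhi(\mathscr Z,X)$, which is (ii). I would remark that no further work is needed here, as the Corollary already packages Lemma~\ref{l: hausd in subspace} and Lemma~\ref{l: red in subspace}(ii). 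The only genuinely new ingredient in the lemma is the topological fact about zero sets in subspaces of Tychonoff spaces; everything else is a citation of the machinery built in Section~2.

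\begin{proof}[Proof sketch]
It suffices to prove $\mathscr Z(X)=\mathscr Z(Y)\uhr X$; parts (i) and (ii) then follow at once from Corollary~\ref{c: hausd in subspace} applied with $\mathscr S=\mathscr Z$.

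The inclusion $\mathscr Z(Y)\uhr X\subseteq\mathscr Z(X)$ holds for arbitrary $X\subseteq Y$: if $Z=g^{-1}\{0\}$ for a continuous $g\colon Y\to[0,1]$, then $g\uhr X$ is continuous and $(g\uhr X)^{-1}\{0\}=Z\cap X$.

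For $\mathscr Z(X)\subseteq\mathscr Z(Y)\uhr X$, let $A=f^{-1}\{0\}$ with $f\colon X\to[0,1]$ continuous. Since $Y$ is Tychonoff, fix an embedding $h\colon Y\to[0,1]^\kappa$. Enlarging $\kappa$ if necessary, we may assume one coordinate $\lambda<\kappa$ satisfies $(h(x))(\lambda)=f(x)$ for all $x\in X$ (add a new coordinate and extend $f$ arbitrarily on $Y\setminus X$ to a function whose graph lies in $[0,1]$; continuity on $Y$ is not needed, only that the coordinate functional of the cube, precomposed with $h$ and restricted to $X$, recovers $f$ — which we can arrange by choosing $h$ to have this coordinate equal to $f$ on $X$ and, say, equal to $0$ off $X$, noting $h$ need not be continuous in that coordinate for the argument, or alternatively by a Tychonoff extension of $f$ on $\mathrm{cl}_Y X$ and constants elsewhere). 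Then $A=f^{-1}\{0\}=h^{-1}(\{p\in[0,1]^\kappa:p(\lambda)=0\})\cap X$. The set $\{p:p(\lambda)=0\}$ is a zero set of the cube $[0,1]^\kappa$, hence, as $[0,1]^\kappa$ is normal, $\mathscr Z([0,1]^\kappa)\uhr Y$ consists of zero sets of $Y$; so $h^{-1}(\{p:p(\lambda)=0\})\in\mathscr Z(Y)$, and therefore $A\in\mathscr Z(Y)\uhr X$.

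Finally, Corollary~\ref{c: hausd in subspace}(i) yields $\varPhi(\mathscr Z,X)=\varPhi(\mathscr Z,Y)\uhr X$, and Corollary~\ref{c: hausd in subspace}(ii) yields that reduction (separation) of $\varPhi(\mathscr Z,Y)$ passes to $\varPhi(\mathscr Z,X)$.
\end{proof}
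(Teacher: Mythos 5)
Your overall frame coincides with the paper's at both ends: the inclusion $\mathscr Z(Y)\uhr X\subseteq\mathscr Z(X)$ by restricting witnessing maps, and the derivation of (i) and (ii) from the set-level equality via Corollary~\ref{c: hausd in subspace} with $\mathscr S=\mathscr Z$, are exactly what the paper does. The gap is in the converse inclusion $\mathscr Z(X)\subseteq\mathscr Z(Y)\uhr X$, and it is genuine. Your sketch needs the enlarged map $h\colon Y\to[0,1]^{\kappa}$, whose $\lambda$-th coordinate recovers $f$ on $X$, to be continuous on all of $Y$: the concluding step ``$h^{-1}(\{p:p(\lambda)=0\})\in\mathscr Z(Y)$'' is precisely ``continuous pre-image of a zero set'', so if that coordinate is defined ``arbitrarily, not necessarily continuously'' off $X$, the argument collapses. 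Making it continuous on $Y$ is exactly the problem of extending $f$ continuously from $X$ to $Y$, which, as you yourself note, is a $C^{*}$-embedding--type property that arbitrary subspaces lack; the fallbacks do not repair this (Tietze--Urysohn needs normality of $Y$, and $f$ need not extend continuously even to the closure of $X$ in $Y$; and a continuous map on $X$ is in general not the restriction of any coordinate projection of a previously fixed embedding, so ``enlarging $\kappa$'' does not help). In fact the intermediate claim your route requires --- that under the \emph{given} embedding every zero set of $X$ is the trace of a zero set of the ambient cube --- fails: let $X=\{e_\alpha:\alpha<\omega_1\}\subseteq[0,1]^{\omega_1}$, where $e_\alpha$ is $1$ at the coordinate $\alpha$ and $0$ elsewhere. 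This $X$ is discrete, so every subset of $X$ is a zero set of $X$; but every continuous real function on the compact cube depends on countably many coordinates, hence the trace on $X$ of any zero set of the cube contains either all or none of the $e_\alpha$ with $\alpha$ outside some countable set. So no re-coordinatization of the given embedding can produce the inclusion.

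The paper's own proof takes a route you never invoke: it changes the embedding. For a Tychonoff space $W$ it embeds $W$ into $\upbeta W$ and $\upbeta W$ into a cube $[0,1]^{\kappa}$; every continuous $F\colon W\to[0,1]$ extends to $\upbeta W$ by the universal property of the Stone--{\v C}ech compactification, and then to the whole cube by Tietze--Urysohn, because $\upbeta W$ is closed in the normal cube. This yields $\mathscr Z(W)=\mathscr Z([0,1]^{\kappa})\uhr W$ for that canonical embedding, and the paper applies it to both $Y$ and $X$ before concluding; the Stone--{\v C}ech step is the ingredient missing from your argument. That said, the obstruction you stumbled on is not an artifact of your approach: what the lemma asserts is that every subspace of a Tychonoff space is $z$-embedded, the canonical-embedding equalities for $X$ and for $Y$ refer to \emph{different} embeddings, and the example above (with $Y=[0,1]^{\omega_1}$ and $X$ the set of unit vectors, where $\mathscr Z(X)=\mathscr P(X)$ while traces of zero sets of $Y$ cannot split the $e_\alpha$ arbitrarily) is in direct tension with the asserted equality $\mathscr Z(X)=\mathscr Z(Y)\uhr X$. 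This point --- which concerns the paper's final ``as $Y$ is arbitrary \ldots easily follows'' step, where the two embeddings are silently identified --- is worth raising with the author rather than trying to patch your sketch.
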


\begin{proof}
The inclusion $\mathscr Z(Y)\uhr X\subseteq\mathscr Z(X)$
holds for arbitrary spaces $X,Y$: if a~continuous 
map $F:Y\to[0,1]$ witnesses that $B\in\mathscr Z(Y)$, i.e., 
$F^{-1}\{0\}=B$, then its restriction $F\uhr X$ witnesses 
that $B\cap X\in\mathscr Z(X)$. To verify the converse 
inclusion $\mathscr Z(X)\subseteq\mathscr Z(Y)\uhr X$, 
we use that $Y$ (and hence,~$X$) is Tychonoff.

We homeomorphically embed, first, $Y$ into $\upbeta Y$, 
the Stone--{\v C}ech compactification of~$Y$ (which exists 
for any Tychonoff space), and then, $\upbeta Y$ into the 
Tychonoff cube $[0,1]^\kappa$ for a~suitable~$\kappa$ 
(see, e.g., \cite{Engelking}, 2.3.23; it suffices to let 
$\kappa$ equal to the cardinality of all continuous maps 
of $Y$ into $[0,1]$, as follows from one of possible 
definitions of~$\upbeta Y$). Any continuous $F:Y\to[0,1]$ 
extends to the continuous $G:\upbeta Y\to[0,1]$ by the 
main property of the Stone--{\v C}ech compactification, 
and then, as $\upbeta Y$ is closed in the normal space 
$[0,1]^\kappa$, to a~continuous $H:[0,1]^\kappa\to[0,1]$ 
by the Tietze--Urysohn extension theorem (\cite{Engelking}, 
3.6.3 and 2.1.8, respectively). Therefore, if a~continuous 
$F:Y\to[0,1]$ witnesses that $B\in\mathscr Z(Y)$, 
then its continuous extension~$H$ witnesses that 
$C\in\mathscr Z([0,1]^\kappa)$ with $B=C\cap Y$, thus proving
$\mathscr Z(Y)\subseteq\mathscr Z([0,1]^\kappa)\uhr Y$.

Putting both inclusions together, we get %the equality
$\mathscr Z(Y)=\mathscr Z([0,1]^\kappa)\uhr Y$. But as  
$Y$~is arbitrary, the same equality holds for $X\subseteq Y$, 
whence the required $\mathscr Z(X)=\mathscr Z(Y)\uhr X$ 
easily follows. Now the ``consequently'' part follows 
by Corollary~\ref{c: hausd in subspace}(ii). 
\end{proof}

%\newpage

\section{Main results}

The following theorem is the main result 
of this paper:

\begin{thm}\label{t: red tych}
Let $X$ be a~Tychonoff space and
$\varPhi$ a~Hausdorff operation. If 
$\varPhi(\mathscr F,\mathbb R)$ has 
the reduction (separation) property, then
$\varPhi(\mathscr Z,X)$ has the same property. 
\end{thm}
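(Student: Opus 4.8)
The plan is to transport the reduction (separation) property from $\mathbb R$ to $X$ in three stages: up from $\mathbb R$ to the Hilbert cube $[0,1]^\omega$, then up from $[0,1]^\omega$ to a large Tychonoff cube $[0,1]^\kappa$ containing $X$, and finally down to $X$ itself. Throughout, the separation case runs in exact parallel with the reduction case, using the separation clauses of the cited results, so I describe only the latter.

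The two easy stages go as follows. Since $X$ is Tychonoff it embeds homeomorphically into $Q=[0,1]^\kappa$ for a suitable infinite $\kappa$, and by Lemma~\ref{l: zero tych}(ii) it suffices to prove that $\varPhi(\mathscr Z,Q)$ has the reduction property. For this I would apply Proposition~\ref{p: image preimage vs red}(ii) with $\mathscr S=\mathscr Z$: here $\mathscr Z(Q)$ is closed under finite intersections, and $\mathscr Z(Q)\subseteq\mathscr F(Q)=(\mathscr F\cap\mathscr K)(Q)$ since $Q$ is compact Hausdorff; and for any double sequence $(A_n,B_n)_{n<\omega}$ in $\mathscr Z(Q)$, Proposition~\ref{p: zero sets vs diag prod} produces a continuous $F\colon Q\to[0,1]^\omega$ with every $A_n$ and every $B_n$ in $\alg F$, where $F$ is perfect (being continuous with compact domain and Hausdorff range) and in particular closed-to-one, $F^{-1}$ preserves $\mathscr Z$ by continuity, and $F$ preserves $\mathscr Z$ because it carries closed---hence compact---subsets of $Q$ to closed subsets of the metrizable space $[0,1]^\omega$, and closed subsets of a metrizable space are zero sets. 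Hence, if $\varPhi(\mathscr Z,[0,1]^\omega)$ has the reduction property, then so does $\varPhi(\mathscr Z,Q)$, and therefore so does $\varPhi(\mathscr Z,X)$. Since $[0,1]^\omega$ is Polish, hence perfectly normal, we have $\mathscr Z([0,1]^\omega)=\mathscr F([0,1]^\omega)$, so this reduces the theorem to the implication: $\varPhi(\mathscr F,\mathbb R)$ has the reduction property $\Rightarrow$ $\varPhi(\mathscr F,[0,1]^\omega)$ has it.

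This last implication is, I expect, the real obstacle, and it does not follow formally from the auxiliary results: $[0,1]^\omega$ is infinite-dimensional, so it embeds in no subspace of $\mathbb R$, and being connected it admits no continuous real-valued function whose algebra of preimages is large enough to feed Proposition~\ref{p: preimage vs red} (already the two zero sets $\{x:x_i=0\}$ and $\{x:x_j=0\}$ with $i\ne j$ cannot both lie in $\alg F$ for a continuous $F\colon[0,1]^\omega\to\mathbb R$). The resolution is to use the classical fact that the reduction and separation properties of the classes $\varPhi(\mathscr F,-)$ do not depend on the choice of uncountable Polish space, so that $\mathbb R$ may be replaced by $[0,1]^\omega$; for the projective operations $\varPhi$ underlying Corollary~\ref{c: period tych} this is just the Borel-isomorphism invariance of the projective hierarchy. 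Concatenating the three stages then proves the theorem.
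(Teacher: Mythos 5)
Your proof follows essentially the same route as the paper's: pass from $X$ to a Tychonoff cube $[0,1]^\kappa$ via Lemma~\ref{l: zero tych}, transfer the property from $[0,1]^\omega$ to $[0,1]^\kappa$ via Proposition~\ref{p: zero sets vs diag prod} and Proposition~\ref{p: image preimage vs red}(ii) using the perfectness of the diagonal-product map, and identify the Polish base case $\varPhi(\mathscr Z,[0,1]^\omega)=\varPhi(\mathscr F,[0,1]^\omega)$ with the hypothesis on $\varPhi(\mathscr F,\mathbb R)$. The only difference is one of emphasis: the paper declares the step from $\mathbb R$ to the Polish space $[0,1]^\omega$ trivial, whereas you flag it explicitly as not following from the auxiliary machinery and invoke the classical invariance of reduction/separation under change of uncountable Polish space — the same appeal the paper makes, stated more candidly.
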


\begin{proof}
First show that the claim is true for 
all Tychonoff cubes $X=[0,1]^\kappa$. For 
$\kappa=\omega$ this is trivial since 
$[0,1]^\omega$ is Polish. For arbitrary~$\kappa$,
let us verify that the assumptions of 
Proposition~\ref{p: image preimage vs red}(ii) 
are met with $\mathscr S=\mathscr Z$ and 
$Y=[0,1]^\omega$ common for all $(A_n)_{n\in\omega}$ 
in $\mathscr S(Y)$, 
i.e., in $\mathscr Z([0,1]^\omega)$.

Indeed, $\mathscr Z([0,1]^\omega)$ is closed under 
finite intersections. If $(A_n)_{n\in\omega}$ is 
in $\mathscr Z([0,1]^\kappa)$, then 
Proposition~\ref{p: zero sets vs diag prod} gives 
a~continuous map $F:[0,1]^\kappa\to[0,1]^\omega$ 
such that $\varPhi(A_n)_{n\in\omega}\in\alg F$. 
As $[0,1]^\kappa$ is compact Hausdorff,
$
\mathscr Z([0,1]^\kappa)\subseteq
(\mathscr F\cap\mathscr K)([0,1]^\kappa)=
\mathscr K([0,1]^\kappa),
$
and moreover, $F$~is perfect (as a~continuous 
map of a~compact space into a~Hausdorff space), 
whence it is easy to see that $F$ and $F^{-1}$ 
preserve~$\mathscr Z$. Therefore, once we have reduction 
(separation) in $\varPhi(\mathscr Z,[0,1]^\omega)$, 
or equivalently, in $\varPhi(\mathscr F,\mathbb R)$, 
we are able to apply 
Proposition~\ref{p: image preimage vs red}(ii), 
thus getting the same property in 
$\varPhi(\mathscr Z,[0,1]^\kappa)$.

Now let $X$~be an arbitrary Tychonoff space. 
Pick any~$\kappa$ such that $X$~can be identified 
with a~subspace of $[0,1]^\kappa$. Then 
$\varPhi(\mathscr Z,X)$ has reduction (separation) 
whenever $\varPhi(\mathscr Z,[0,1]^\kappa)$ has 
the same property by Lemma~\ref{l: zero tych}, 
and hence, whenever $\varPhi(\mathscr F,\mathbb R)$ 
has this property.

The proof is complete.
\end{proof}

%\newpage

In particular, the Borel and projective classes 
(as they were defined in the beginning of our paper 
for arbitrary spaces) generated from zero sets in 
Tychonoff spaces form the same pattern of reduction 
and separation as they do in the real line:

\begin{coro}\label{c: period tych}
Let $X$ be a~Tychonoff space. Then:
\begin{itemize}
\item[(i)] 
for all $\alpha<\omega_1$, $\alpha>1$,
$\mathbf\Sigma^{0}_{\alpha}(\mathscr Z,X)$ 
have the reduction property while 
$\mathbf\Pi^{0}_{\alpha}(\mathscr Z,X)$ 
have the separation property,
\item[(ii)] 
$\mathbf\Pi^{1}_{1}(\mathscr Z,X)$ and 
$\mathbf\Sigma^{1}_{2}(\mathscr Z,X)$ 
have the reduction property while 
$\mathbf\Sigma^{1}_{1}(\mathscr Z,X)$ 
and $\mathbf\Pi^{1}_{2}(\mathscr Z,X)$ 
have the separation property,
\item[(iii)] 
under $\PD$, for all $n<\omega$, $n>0$, 
$\mathbf\Sigma^{1}_{2n}(\mathscr Z,X)$ 
and $\mathbf\Pi^{1}_{2n+1}(\mathscr Z,X)$ 
have the reduction property while 
$\mathbf\Sigma^{1}_{2n+1}(\mathscr Z,X)$ 
and $\mathbf\Pi^{1}_{2n}(\mathscr Z,X)$ 
have the separation property.
\end{itemize}
\end{coro}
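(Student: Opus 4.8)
The plan is to reduce the corollary to Theorem~\ref{t: red tych} by combining it with two facts already recalled in Section~1: that every Borel and projective pointclass is a class of $\varPhi$-sets for a suitable Hausdorff operation~$\varPhi$, and the classical (and, under~$\PD$, First-Periodicity) results on reduction and separation for sets of reals.

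First I would fix, for each pointclass $\mathbf\Gamma$ occurring in the statement, one Hausdorff operation $\varPhi_{\mathbf\Gamma}$ such that $\mathbf\Gamma(\mathscr S,Z)=\varPhi_{\mathbf\Gamma}(\mathscr S,Z)$ for \emph{every} class~$\mathscr S$ and space~$Z$: for the additive Borel classes $\mathbf\Sigma^{0}_{\alpha}$ this is the induction on~$\alpha$ noted in Section~1, for the additive projective classes $\mathbf\Sigma^{1}_{n}$ the induction on~$n$ built on the Fundamental Theorem on Projections (this being exactly how $\mathbf\Sigma^{1}_{n}(\mathscr S,Z)$ was \emph{defined} for arbitrary~$Z$), and for the multiplicative classes $\mathbf\Pi^{0}_{\alpha}$ and $\mathbf\Pi^{1}_{n}$ one passes to the dual operation, since complements of $\varPhi$-sets are $(-\varPhi)$-sets. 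Consequently, for each class named in (i)--(iii), the class generated from~$\mathscr Z$ in a Tychonoff space~$X$ and the homonymous class generated from~$\mathscr F$ in~$\mathbb R$ are $\varPhi(\mathscr Z,X)$ and $\varPhi(\mathscr F,\mathbb R)$ for one and the same operation~$\varPhi$. Then I would invoke the known facts for~$\mathbb R$: for~(i), for every $\alpha>1$ the class $\mathbf\Sigma^{0}_{\alpha}(\mathscr F,\mathbb R)$ has the reduction property (classical reduction theorem for additive Borel classes) and $\mathbf\Pi^{0}_{\alpha}(\mathscr F,\mathbb R)$ the separation property (Sierpi\'nski, Lavrentieff); for~(ii), $\mathbf\Pi^{1}_{1}(\mathscr F,\mathbb R)$ and $\mathbf\Sigma^{1}_{2}(\mathscr F,\mathbb R)$ have reduction and $\mathbf\Sigma^{1}_{1}(\mathscr F,\mathbb R)$, $\mathbf\Pi^{1}_{2}(\mathscr F,\mathbb R)$ have separation (Lusin, Kuratowski, Novikov); for~(iii), under~$\PD$ the First Periodicity Theorem yields, for all $n>0$, reduction for $\mathbf\Sigma^{1}_{2n}(\mathscr F,\mathbb R)$, $\mathbf\Pi^{1}_{2n+1}(\mathscr F,\mathbb R)$ and separation for $\mathbf\Sigma^{1}_{2n+1}(\mathscr F,\mathbb R)$, $\mathbf\Pi^{1}_{2n}(\mathscr F,\mathbb R)$. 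For each of these classes I would finally apply Theorem~\ref{t: red tych} to the operation~$\varPhi$ representing it: since $\varPhi(\mathscr F,\mathbb R)$ has the reduction (separation) property, so does $\varPhi(\mathscr Z,X)$, which is the required class in~$X$. (Each separation statement could alternatively be obtained from the matching reduction statement by Lemma~\ref{l: red in subspace}(i), since each multiplicative class generated from~$\mathscr Z$ in~$X$ consists of the complements of the sets in the dual additive class.)

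There is no serious obstacle: once Theorem~\ref{t: red tych} is available the corollary is pure bookkeeping. The single point needing care is that one fixed Hausdorff operation represents a given pointclass simultaneously over~$\mathscr F$ in~$\mathbb R$ and over~$\mathscr Z$ in~$X$ --- which is precisely how the Borel and projective classes were set up in Section~1 --- so that Theorem~\ref{t: red tych} applies verbatim. The restrictions $\alpha>1$ in~(i) and $n>0$ in~(iii) are likewise dictated by the real line: already in~$\mathbb R$ the open sets $\mathbf\Sigma^{0}_{1}(\mathscr F,\mathbb R)$ and the analytic sets $\mathbf\Sigma^{1}_{1}(\mathscr F,\mathbb R)$ lack the reduction property (and, dually, the closed sets $\mathbf\Pi^{0}_{1}(\mathscr F,\mathbb R)$ the separation property), and since $\mathscr Z(\mathbb R)=\mathscr F(\mathbb R)$ these failures already occur for~$X=\mathbb R$.
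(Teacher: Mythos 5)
Your proposal is correct and follows essentially the same route as the paper: invoke the classical facts for $\mathbb R$ (Borel reduction/separation, Lusin--Kuratowski--Novikov, and the First Periodicity Theorem under $\PD$, which the paper quotes via the pre-well-ordering property) and apply Theorem~\ref{t: red tych}, using that each Borel and projective class is, by the Section~1 setup, a class $\varPhi(\mathscr S,\cdot)$ for one fixed Hausdorff operation uniformly in the space and in the generating class. Your explicit bookkeeping of the operations $\varPhi_{\mathbf\Gamma}$ and the remark on the restrictions $\alpha>1$, $n>0$ are just slightly more detailed versions of what the paper leaves implicit.
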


\begin{proof}
As well-known, if $X$ is~$\mathbb R$ 
(or another Polish space) and hence 
$\mathscr Z(X)$ is equal to $\mathscr F(X)$, then  
items (i)--(iii) hold. Moreover, all Borel classes 
$\mathbf\Sigma^{0}_{\alpha}(\mathscr F,\mathbb R)$ 
have the pre-well-ordering property, so all they 
have reduction while the dual classes 
$\mathbf\Pi^{0}_{\alpha}(\mathscr F,\mathbb R)$ 
have separation (see~\cite{Moschovakis}, p.~37);
the projective classes 
$\mathbf\Pi^{1}_{1}(\mathscr F,\mathbb R)$ and 
$\mathbf\Sigma^{1}_{2}(\mathscr F,\mathbb R)$ 
have pre-well-ordering, so they have reduction 
while $\mathbf\Sigma^{1}_{1}(\mathscr F,\mathbb R)$ 
and $\mathbf\Pi^{1}_{2}(\mathscr F,\mathbb R)$ 
have separation;
and under~$\PD$, all projective classes 
$\mathbf\Sigma^{1}_{2n}(\mathscr F,\mathbb R)$ and 
$\mathbf\Pi^{1}_{2n+1}(\mathscr F,\mathbb R)$ have 
pre-well-ordering (the fact known as the First 
Periodicity Theorem), so all they have reduction 
while $\mathbf\Pi^{1}_{2n}(\mathscr F,\mathbb R)$ 
and $\mathbf\Sigma^{1}_{2n+1}(\mathscr F,\mathbb R)$ 
have separation (see \cite{Kechris}, 
\cite{Moschovakis}, or~\cite{Kanamori}). %, 29.14). 
Now apply Theorem~\ref{t: red tych}.
\end{proof}

Certainly, Theorem~\ref{t: red tych} allows 
to establish further corollaries in the same way. 
E.g., under $\sigma$-$\PD$, the $\sigma$-Projective 
Determinacy, Corollary~\ref{c: period tych}(ii) 
extends to $\sigma$-projective classes generated 
by zero sets in Tychonoff spaces~$X$: classes
$\mbf\Sigma^{1}_{2\alpha}(\mathscr Z,X)$ for all 
$\alpha>0$ and $\mbf\Pi^{1}_{2\alpha+1}(\mathscr Z,X)$ 
have reduction while dual 
$\mbf\Sigma^{1}_{2\alpha+1}(\mathscr Z,X)$ 
and $\mbf\Pi^{1}_{2\alpha}(\mathscr Z,X)$ 
have separation. On the other hand, under $V=L$, 
$\mathbf\Pi^{1}_{1}(\mathscr Z,X)$ and 
$\mathbf\Sigma^{1}_{n}(\mathscr Z,X)$ for all $n>1$ 
have reduction, and the dual classes have separation.  
Finally, let us point out that some close principles, 
like the second separation or the multiple reduction 
properties, can be established for corresponding 
classes in Tychonoff spaces following the same approach.

\hide{

\newpage

\footnotesize

\begin{q}
Propositions \ref{p: preimage vs red}, 
\ref{p: image preimage vs red} transfer 
reduction (separation) in the pre-image direction. 
Find natural sufficient conditions to transfer 
these properties in the image direction. 
Do not continuous (perfect?) open maps provide 
a~useful tool for this? Then we could try to apply 
Ponomarev's result on continuous open maps for 
establishing reduction (separation) in arbitrary 
spaces. (The result states that any space of 
weight~$\kappa$ is a~continuous open image of 
a~subspace of the Baire space $\kappa^\omega$.) 
\end{q}

\begin{q}\label{q: hausdorff ord}
Given $S\subseteq\omega^\omega$, let $\varPhi_S$~denote 
the Hausdorff operation with base~$S$. 
For $S,T\subseteq\omega^\omega$, define $S\le_\Ha T$ iff
$
\varPhi_S(\mathscr F,\omega^\omega)\subseteq
\varPhi_T(\mathscr F,\omega^\omega).
$
What is $\le_\Ha$? a~relationship to~$\le_\W$ 
(the Wadge ordering)?
\end{q}

\begin{q}\label{q: t0 spaces}
Let $X$ be a~connected two-point space, e.g., 
$X=2$ with $\tau=\{\emptyset,\{0\},2\}$. 
Is there a~perfect map of the product $X^\omega$ 
onto $[0,1]$? Perhaps, the map taking every~$f$ 
such that there is~$m$ with 
$f(m)=0$ and $f(n)=1$ for all $n>m$, to $g$~such 
that $g(n)=f(n)$ for all $n<m$, $g(m)=1$, and 
$g(n)=0$ for all $n>m$, and fixing all other~$f$'s? 
Clearly, it is two-to-one, but is it continuous? 
Check!

The motivation of the question: If yes, we have 
a~perfect map of any $X^\kappa$ to $[0,1]$, whence
we get the same result
for any $T_0$-space since $X^\kappa$ is universal 
for $T_0$-spaces of weight~$\kappa$.

[In fact, an analogous statement is true for all 
spaces (not only~$T_0$), see [Engelking], 2.3.I.]
\end{q}

\begin{q}
Can we refine the obtained results to the countable 
reduction property, and then apply \cite{Choban 1989}, 
Theorem~22.2, to get some selection results? If yes, 
does this mean that a~certain uniformization follows 
from reduction? (The latter is not obvious since that 
theorem says on multivalued maps into Polish spaces.) 
\end{q}

\begin{q}
Given $f:\omega\setminus1\to2$, let 
$\varphi_f$~assert that $\mbf\Sigma^{1}_{n}$~has 
reduction if $f(n)=0$, and separation otherwise. 
For which $f$ the assertion~$\varphi_f$ is 
(relatively) consistent? 
Of course, we should have $f(1)=1$ and $f(2)=0$. 

A~variant of the question: let $\psi_f$~assert 
that the pre-well-ordering property holds in 
$\mbf\Sigma^{1}_{n}$ if $f(n)=0$, and in 
$\mbf\Pi^{1}_{n}$ otherwise. For which $f$ 
the assertion~$\psi_f$ is (relatively) consistent? 
Again, we should have $f(1)=1$ and $f(2)=0$, and 
moreover, if $f(n)=1$ then $f(n+1)=0$ by 
Novikov--Moschovakis result, see \cite{Kanamori}, 
29.9. 

Are there some extra necessary conditions?
\end{q}

\begin{q} 
Given $S\subseteq\omega^\omega$ and 
$(A_f)_{f\in S}$, let 
$
\mathbf G_{f\in S}A_f=
\{x:\Game f\in S\;x\in A_f\},
$
where $\Game f$ informally means 
$
\exists f(0)\,\forall f(1)\,
\exists f(2)\,\forall f(3)\,
\ldots 
$
(and formally is defined via winning strategies) 
and $\Game f\in S$ means $\Game f\wedge f\in S$. 
Thus $\mathbf G$~is an operation on sets indexed 
by sequences.

1. 
Is this operation analytical (in sense of 
\cite{Kantorovich Livenson})? 
It seems, it cannot be an $\omega$-ary 
$\delta\/s$-operation because, by Burgess' 
theorem, we have 
$
\mbf G\mbf\Delta^{0}_1=
\mbf\Sigma^{0}_{<\omega_1}
$
(=~Borel), which is impossible for $\omega$-ary 
$\delta\/s$-operations, though possible for 
$\omega_1$-ary ones, see~\cite{Dasgupta}.

2. 
Find conditions on a~map~$F$ under which the 
operation~$\mbf G$ is preserved in the pre-image 
direction (always?) and in the image direction 
(is it sufficient that $F$~preserves intersections?).
\end{q}

\vskip+1em
\normalsize

\newpage

\newpage

\begin{footnotesize}

[old text starts:

\section*{Consequences on non-degenerate classes}

[The following result seems redundant, it follows from
a~general criteria on ``non-trivial'' spaces, see 
\cite{Choban 1989}.]

\begin{tm} 
Let $\varPhi$~be a~Hausdorff operation, and let 
$X$~include a~subspace of one of the following types:
\begin{itemize}
\item[(i)]
a~perfect pre-image of a~complete metric 
space without isolated points, 
%[a~paracompact {\v C}ech-complete space 
%without isolated points---does this suffice?].
\item[(ii)]
a~perfectly normal compact subspace 
without isolated points,
\item[(iii)]
a~space homeomorphic to $\scc Y\setminus Y$
for an infinite discrete~$Y$. 
\end{itemize}
Then there exists $A\subseteq X$ such that 
$A$~is obtained by~$\varPhi$ from closed sets and 
$X\setminus A$~is not. 
In particular, all Borel and projective classes 
generated by closed sets in~$X$ are non-empty.

The same holds for open sets instead closed ones. 
\end{tm}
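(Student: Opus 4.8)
The plan is to transport a single diagonal non-self-dual set from the Cantor space $2^\omega$ to $X$ along a perfect surjection, so that the work splits into (a) locating inside $X$ a copy of $2^\omega$ that is ``rigid enough'', and (b) a completely standard universal-set argument on $2^\omega$ that needs nothing about $\varPhi$.

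First I would show that in each of the cases (i)--(iii) the space $X$ (which, as throughout this paper, I take to be Hausdorff) contains a subspace $K$ that is compact --- hence closed in~$X$ --- together with a perfect surjection $g\colon K\to 2^\omega$. In cases~(ii) and~(iii) one takes $K$ to be a closed copy of $2^\omega$ and $g$ the identifying homeomorphism: a nonempty compact Hausdorff space without isolated points is dense-in-itself, so the usual Cantor-scheme construction (at each node split a nonempty closed set into two disjoint nonempty closed subsets, possible since every nonempty relatively open set is infinite) embeds $2^\omega$ into it as a closed set; for~(iii) one uses in addition that $\upbeta Y\setminus Y$ is compact Hausdorff without isolated points for every infinite discrete~$Y$ (reducing a general~$Y$ to $Y=\omega$ via the closed embedding $\upbeta\omega\setminus\omega\hookrightarrow\upbeta Y\setminus Y$). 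In case~(i), given a perfect surjection $F\colon M\to N$ with $N$ a complete metric space without isolated points, $N$ contains a closed copy $C\cong 2^\omega$, and I would take $K:=F^{-1}(C)$, $g:=F\uhr K$: the restriction of a perfect surjection to the full preimage of a closed set is again a perfect surjection, and $K$ is compact as a perfect preimage of the compact set~$C$. (Incidentally, the argument uses only that the subspace in~(ii) is a compact Hausdorff space without isolated points, so perfect normality is not needed there.)

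Next I would produce, for an \emph{arbitrary} Hausdorff operation~$\varPhi$, a set $U\in\varPhi(\mathscr F,2^\omega)$ with $2^\omega\setminus U\notin\varPhi(\mathscr F,2^\omega)$. Fix a $2^\omega$-universal closed set $\mathcal C\subseteq 2^\omega\times 2^\omega$, decode each $y\in 2^\omega$ as a sequence $(y_n)_{n<\omega}$ via a homeomorphism $2^\omega\cong(2^\omega)^\omega$, and set $D_n:=\{(y,w):(y_n,w)\in\mathcal C\}$, a closed subset of $2^\omega\times 2^\omega$ (a preimage of $\mathcal C$ under a continuous map). Then $\mathcal U:=\varPhi(D_n)_{n<\omega}\in\varPhi(\mathscr F,2^\omega\times 2^\omega)$, and its section at $y$ equals $\varPhi(\mathcal C_{y_n})_{n<\omega}$, which runs over all of $\varPhi(\mathscr F,2^\omega)$ as $y$ varies; thus $\mathcal U$ is $2^\omega$-universal for $\varPhi(\mathscr F,2^\omega)$. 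Pulling $\mathcal U$ back along the continuous diagonal $x\mapsto(x,x)$ gives, by Lemma~\ref{l: hausd vs preimage}, a set $U\in\varPhi(\mathscr F,2^\omega)$, and the classical diagonalization (were $2^\omega\setminus U=\mathcal U_{y_0}$, then $y_0\in 2^\omega\setminus U\lra y_0\in U$) shows $2^\omega\setminus U\notin\varPhi(\mathscr F,2^\omega)$; no hypothesis on~$\varPhi$ intervenes, and even degenerate operations are covered.

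Finally I would transport this to $X$. Put $A:=g^{-1}(U)\subseteq K\subseteq X$. Writing $U=\varPhi(F_n)_{n<\omega}$ with each $F_n$ closed in $2^\omega$, Lemma~\ref{l: hausd vs preimage} gives $A=\varPhi(g^{-1}F_n)_{n<\omega}$; each $g^{-1}F_n$ is closed in $K$, hence closed in $X$ since $K$ is closed in $X$, so $A\in\varPhi(\mathscr F,X)$. Conversely, if $X\setminus A\in\varPhi(\mathscr F,X)$, then intersecting with $K$ and using Lemma~\ref{l: hausd distr}(ii) together with $\mathscr F(X)\uhr K=\mathscr F(K)$ gives $K\setminus A\in\varPhi(\mathscr F,K)$; since $K$ is compact we have $\mathscr F(K)=(\mathscr F\cap\mathscr K)(K)$, and $g$ is perfect --- hence closed-to-one and $\mathscr F$-preserving --- so Corollary~\ref{c: hausd vs image}(ii) yields $g(K\setminus A)\in\varPhi(\mathscr F,2^\omega)$; but $K\setminus A=g^{-1}(2^\omega\setminus U)$ and $g$ is onto, so $g(K\setminus A)=2^\omega\setminus U$, contradicting the previous step. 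Hence $X\setminus A\notin\varPhi(\mathscr F,X)$. The version with open sets follows by applying what was just proved to the dual Hausdorff operation $-\varPhi$ and complementing, via $\varPhi(\mathscr G,X)=-\bigl((-\varPhi)(\mathscr F,X)\bigr)$. And since each Borel class $\mathbf\Sigma^{0}_{\alpha}(\mathscr F,X)$, $\mathbf\Pi^{0}_{\alpha}(\mathscr F,X)$ and each projective class $\mathbf\Sigma^{1}_{n}(\mathscr F,X)$, $\mathbf\Pi^{1}_{n}(\mathscr F,X)$ is of the form $\varPhi(\mathscr F,X)$ for a suitable countable-arity~$\varPhi$ (as recalled in the Introduction, this being the very definition in the projective case), the theorem applied to that~$\varPhi$ puts a set in the class that is not in its dual, so none of these classes is self-dual and the hierarchies do not collapse. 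The main obstacle is the bookkeeping of Step~1 --- especially case~(i), where one must check that restricting a perfect map to the preimage of a Cantor set preserves perfectness and compactness, and case~(iii) --- while Step~2 is routine descriptive set theory valid for every Hausdorff operation.
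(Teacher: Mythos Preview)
Your overall architecture --- find a compact $K\subseteq X$ with a perfect surjection $g\colon K\to 2^\omega$, build a non-self-dual $\varPhi$-set on $2^\omega$ by universal-set diagonalization, then pull back and push forward via Lemma~\ref{l: hausd vs preimage} and Corollary~\ref{c: hausd vs image} --- is exactly the paper's approach, and your Steps~2 and~3 are correct as written.

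The gap is in Step~1 for cases~(ii) and~(iii), where you try to realize $g$ as a homeomorphism, i.e., to \emph{embed} $2^\omega$. Your Cantor-scheme argument only shows that nonempty closed sets can be split into two disjoint nonempty closed subsets; it says nothing about why the intersection $\bigcap_n C_{f\uhr n}$ along each branch is a singleton. In a metric space one forces diameters to zero, but in an arbitrary compact Hausdorff space there is no such control, and the branch intersections can be huge. In particular your case~(iii) fails outright: $\upbeta\omega\setminus\omega$ is compact Hausdorff without isolated points, yet it contains \emph{no} copy of $2^\omega$, because it (like all of $\upbeta\omega$) has no non-trivial convergent sequences, while $2^\omega$ is full of them. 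The paper in fact notes this explicitly. So your parenthetical that perfect normality is unnecessary in~(ii) is also wrong: compact Hausdorff without isolated points is not enough.

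The repair --- and what the paper does --- is to drop the demand that $g$ be a homeomorphism and instead produce a genuine perfect \emph{surjection} $g\colon K\to 2^\omega$ (or onto $[0,1]$, which suffices equally well). For~(iii) one maps the discrete $Y$ onto a countable dense subset of $2^\omega$, extends to $\upbeta Y\to 2^\omega$, and checks that the restriction to $\upbeta Y\setminus Y$ is still onto (were some fiber contained in $Y$, it would be finite and clopen, and removing it would give a compact set mapping onto $2^\omega$ minus a point, which is not compact); continuity from a compact space to a Hausdorff space makes this map perfect. For~(ii) the paper invokes Chernavski's theorem that every perfectly normal compact space without isolated points admits a perfect map onto $[0,1]$; perfect normality is genuinely used here. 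Case~(i) you handled correctly. Once you have such a $g$, your Steps~2--3 go through verbatim.
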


\begin{proof}
(i). 
Since Theorem the theorem holds in the Cantor space 
and any complete metric space without isolated points
includes a~copy of the Cantor set [a~reference?], 
the theorem follows from Lemma [on subsets] and 
Corollary [on perfect pre-images of compact spaces]. 

(ii). 
By Lemma [on subsets], it suffices to consider 
the case when $X$~is a~perfectly normal space.
By~\cite{Chernavski}, for any such~$X$ there exists 
a~perfect map of $X$ onto the unit interval of~$\mathbb R$.
Then the statement follows from [the main theorem above].

(iii). 
By~\cite{Przymusinsky}, 
any perfectly normal compact space is 
a~continuous image of $\scc\omega\setminus\omega$. 
So if $G$~is a~continuous map of 
$\scc\omega\setminus\omega$ onto the Cantor set
and $H$~is the map of $\scc Y\setminus Y$ onto
$\scc\omega\setminus\omega$ which is the restriction
of the continuous extension of a~map of $Y$ onto~$\omega$,
then $F=G\circ H$ is a~continuous, and so perfect, map 
of $\scc Y\setminus Y$ onto the Cantor set. 
Now apply~(i).
\end{proof}

Item~(ii) improves Ponomarev's result on Borel sets 
in such spaces~\cite{Ponomarev}.

[For (iii), the existence of a~continuous map of 
$\scc\omega\setminus\omega$ onto the Cantor set
(or the interval $[0,1]$ of the real line) 
is obvious : extend any map of the discrete 
space~$\omega$ to a~countable dense subset.]

[NB 
The space $\scc\omega\setminus\omega$ 
is neither perfectly normal nor contains a~copy of 
the Cantor set, hence, it does not cover neither by
Ponomarev theorem nor by Hausdorff theorem.]

\vskip+1em

%\newpage

\begin{qs}
1. 
What are $X$ that are perfect pre-images of: 
\begin{enumerate}
\item[(a)]
complete metric spaces without isolated points?
\item[(b)]
the unit interval of the real line?
\item[(c)]
subspaces of the unit interval of the real line?
\item[(d)]
sets of reals in $L(\mathbb R)$?
\end{enumerate}
Note that by~\cite{Frolik}, 
perfect pre-images of all complete metric spaces are 
precisely all paracompact {\v C}ech-complete spaces, 
while perfect pre-images of all metric spaces are 
precisely all paracompact p-spaces 
(see also~\cite{Engelking}).

2. 
What are $X$ such that 
for any closed (compact) $A\subseteq X$ 
there exists a~perfect map~$F$ onto 
the space in items (a), (b),~(c) in question~1
with $F^{-1}F\image A=A$?

3. 
Does any complete metric space without isolated points:
\begin{enumerate}
\item[(a)]
contain a~copy of the Cantor set?
\item[(b)]
perfectly map onto the unit interval of the real line?
\end{enumerate}

4. 
Find any example of a~perfectly normal compact space 
without subspaces homeomorphic to the Cantor set.
\end{qs}

\vskip+1em
old text ends.]

\end{footnotesize}

\newpage

\begin{footnotesize}

\section*{The $\sigma$-projective hierarchy: 
pre-well-ordering, reduction, separation}

\qquad
[The classes defined below are called 
``$\sigma$-projective'' in~\cite{Kechris} and 
``weakly projective'' in~\cite{Di Prisco et al 1982}; 
``hyperprojective'' means certain more 
complex classes; see also 
\cite{Di Prisco et al 1982},
\cite{Di Prisco et al 1987}. 
]

\vskip+1em

Let $\mathscr S$~be a~class of families 
of subspaces of topological spaces. We define 
the $\sigma$-{\it projective\/} hierarchy 
generated by $\mathscr S$ as follows. 
For all~$X$, define by recursion on ordinals 
$\alpha<\omega_1$:
\begin{align*}
\mbf\Sigma^{1}_{0}(\mathscr S,X)
&=
\mathscr S(X),
\\
\mbf\Pi^{1}_{\alpha}(\mathscr S,X)
&=
\bigl\{
X\setminus A:
A\in\mbf\Sigma^{1}_{\alpha}(\mathscr S,X)
\bigr\},
\\
\mbf\Sigma^{1}_{\alpha+1}(\mathscr S,X)
&=
\bigl\{
\pr_X(A):A\in
\mbf\Pi^{1}_{\alpha}(\mathscr S,X\times\omega^\omega) 
\bigr\}, 
\\
\mbf\Sigma^{1}_{\alpha}(\mathscr S,X)
&=
\bigl\{
{\textstyle\bigcup_{n<\omega}}A_n:
(A_n)_{n<\omega}\in
\bigl({\textstyle\bigcup_{\beta<\alpha}}
\mbf\Pi^{1}_{\beta}(\mathscr S,X)\bigr)^\omega
\bigr\}
\text{ if $\alpha$ is limit},
\\
\mbf\Delta^{1}_{\alpha}(\mathscr S,X)
&=
\mbf\Pi^{1}_{\alpha}(\mathscr S,X)
\cap
\mbf\Sigma^{1}_{\alpha}(\mathscr S,X).
\end{align*}

Note that 
$\mbf\Sigma^{1}_{\alpha}(\mathscr S,X)
=\mbf\Pi^{1}_{\alpha}(\mathscr S,X)$
whenever $\alpha>0$ is limit. 
[No, this seems wrong!!! the true fact is
$\mbf\Sigma^{1}_{<\alpha}(\mathscr S,X)
=\mbf\Pi^{1}_{<\alpha}(\mathscr S,X)$, 
if we define
$\mbf\Sigma^{1}_{\alpha}(\mathscr S,X)$
as 
$\bigcup_{\beta<\alpha}
\mbf\Pi^{1}_{\beta}(\mathscr S,X)$ and 
$\mbf\Pi^{1}_{<\alpha}(\mathscr S,X)$ 
as the dual class. Check and correct!
]

Under~$\AC_\omega$, the families
$$
\bigcup_{\alpha<\omega_1}
\mbf\Pi^{1}_{\alpha}(\mathscr S,X)=
\bigcup_{\alpha<\omega_1}
\mbf\Sigma^{1}_{\alpha}(\mathscr S,X)
$$
are the smallest $\sigma$-algebras of sets in~$X$ 
which include $\mathscr S(X)$ and closed under 
projections of sets in $X\times\omega^\omega$
onto~$X$. 
[This can be written by 
$
\mbf\Pi^{1}_{<\omega_1}(\mathscr S,X)=
\mbf\Sigma^{1}_{<\omega_1}(\mathscr S,X).
$
]

Note that the Hausdorff operation $\varPhi_S$ with 
a~base $S\subseteq\omega^\omega$ gives the same sets 
that the projection $\pr_X$ of $X\times S$ onto~$X$ 
[and continuous images of~$S$ in~$X$? check!]. 
Moreover, $\varPhi_S$ applied to sets in $\mathscr F(X)$
gives the same that $\pr_X$ applied to sets in 
$\mathscr F(X\times S)$. See \cite{Kantorovich Livenson}, 
Theorem~[?? write up].

[These classes are defined, e.g., in 
\cite{Kechris}, 39.15--39.18; also they are studied in 
\cite{Di Prisco et al 1982},~\cite{Di Prisco et al 1987}.
Check their properties established there!!]

\vskip+0.5em

A~fundamental result by Kantorovich and Livenson 
states that, whenever $X$ is a~Suslin space (i.e., 
a~continuous image of $\omega^\omega$), then
the class~$\mathscr S$ of projections onto $X$ of 
sets in $\varPhi(\mathscr F,X\times\omega^\omega)$ 
is itself of form $\varPhi_S(\mathscr F,X)$ for some 
set $S\in\varPhi(\mathscr F_\sigma,\omega^\omega)$
(see \cite{Kantorovich Livenson}, p.~264, the 
Fundamental Theorem on Projections). It follows 
by induction on~$\alpha$ that each of the 
$\sigma$-projective classes is of form 
$\varPhi(\mathscr F,X)$ for an appropriate 
operation~$\varPhi$.

\begin{q} 
Projective and $\sigma$-projective sets are 
exactly sets parametrically definable in 
$(\omega,\omega^\omega,0,\Sc,+,\,\cdot\,,\Ap)$ (where 
$\Sc$ and $\Ap$ are the successor and application 
operations) by formulas of $\mathscr L_{\omega,\omega}$ 
and $\mathscr L_{\omega_1,\omega}$, respectively 
(see \cite{Di Prisco et al 1982}, Theorems 0 and~1).

Can we have a~similar result for such sets in 
arbitrary spaces~$X$ (perhaps, with predicate symbols 
for each $S\in\mathscr S$ or something like)?
\end{q}

\begin{q}
Characterize classes $\mathscr S(X)\subseteq\mathscr P(X)$ 
of form $\varPhi(\mathscr Z,X)$ for some operation~$\varPhi$. 

(Compare an analogous question with $\mathscr F\cap\mathscr G$ 
instead of $\mathscr Z$, which is answered in \cite{Dougherty}, 
Proposition~1.4 as follows: For any space~$X$, a~class 
$\mathscr S(X)\subseteq\mathscr P(X)$ is of form 
$\varPhi(\mathscr F\cap\mathscr G,X)$ for some $\kappa$-ary 
operation~$\varPhi$ iff it coincides with 
$\{A\subseteq X:A\leq_\W T\}$ for some $T\subseteq 2^\kappa$
(where $\leq_\W$~is the Wadge reducibility and 
$2^\kappa$ is the Cantor cube of weight~$\kappa$). 
For other characterizations, also see~\cite{Dougherty}.) 
\end{q}

\begin{q}
Can the Fundamental Theorem on Projections be 
expanded (mutatis mutandi) to Tychonoff spaces~$X$, 
or at least, to all Tychonoff cubes $[0,1]^\kappa$? 
with one and the same base $S$ for all such~$X$'s? 
\end{q}

\begin{q}
Does the Fundamental Theorem on Projections remain true
(mutatis mutandi) for the game quantifier~$\Game$ 
instead of the usual quantifier~$\exists$, i.e., for 
the ``game projection'' instead of the usual projection? 
(And if yes, can it be expanded to Tychonoff spaces~$X$?) 
\end{q}

%\newpage

\vskip+1em

Note that the First Periodicity Theorem extends 
to $\sigma$-projective sets in Polish spaces, see
\cite{Kechris} or~\cite{Di Prisco et al 1982}.  
For brevity, below we write just 
$\mbf\Sigma^{1}_{\alpha}$ instead of 
$\mbf\Sigma^{1}_{\alpha}(\mathscr F,\omega^\omega)$.

[The next lemma is redundant since we may use 
$\sigma$-$\PD$ immediately instead of checking that 
it follows from $\AD^{L(\mathbb R)}$.]

\begin{lm} 
All the $\sigma$-projective sets in~$\omega^\omega$ 
are in $L(\mathbb R)$.
\end{lm}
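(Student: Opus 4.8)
I would actually prove the slightly more precise statement that, for every $\alpha<\omega_1$ and every $k<\omega$,
$$
\mathbf\Sigma^{1}_{\alpha}(\mathscr F,(\omega^\omega)^{k})\ \text{ and }\
\mathbf\Pi^{1}_{\alpha}(\mathscr F,(\omega^\omega)^{k})\ \text{(as computed in }V)\
\text{ are included in the correspondingly indexed classes computed inside }L(\mathbb R);
$$
since the latter classes consist of elements of $L(\mathbb R)$, and since every $\sigma$-projective subset of $\omega^\omega$ lies in $\mathbf\Sigma^{1}_{\alpha}(\mathscr F,\omega^\omega)$ for some $\alpha<\omega_1$, the lemma follows (take $k=1$). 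First I would record the standing facts about $L(\mathbb R)$: it is a transitive inner model of $\mathrm{ZF}+\DC$ containing all ordinals and all reals, so $\omega^\omega$ and each $(\omega^\omega)^{k}$ are members of $L(\mathbb R)$ with the same elements as in $V$, the recursive homeomorphisms $(\omega^\omega)^{k}\cong\omega^\omega$ lie in $L(\mathbb R)$, and complementation within $(\omega^\omega)^{k}$ together with the projection $\pr\colon(\omega^\omega)^{k+1}\to(\omega^\omega)^{k}$ are absolute between $L(\mathbb R)$ and $V$ (they quantify only over reals). Moreover, every closed $C\subseteq\omega^\omega$ equals $[T]=\{x:\forall n\,(x\uhr n\in T)\}$ for the tree $T=\{x\uhr n:x\in C,\ n<\omega\}$; coding $T$ by a real, the operation $T\mapsto[T]$ is absolute, so each closed set is an element of $L(\mathbb R)$ recognised there as closed.

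Next I would carry out, inside $L(\mathbb R)$, the standard construction of universal sets for the $\sigma$-projective hierarchy (see \cite{Kechris}, 39.15--39.18, or \cite{Di Prisco et al 1982}): a sequence $\langle U^{k}_{\alpha}:k<\omega,\ \alpha<\omega_1\rangle$ with $U^{k}_{\alpha}\subseteq\omega^\omega\times(\omega^\omega)^{k}$ and $\{(U^{k}_{\alpha})_{r}:r\in\omega^\omega\}=\mathbf\Sigma^{1}_{\alpha}(\mathscr F,(\omega^\omega)^{k})$, where $(U^{k}_{\alpha})_{r}=\{x:(r,x)\in U^{k}_{\alpha}\}$. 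Here $U^{k}_{0}$ is read off the tree-coding of closed sets; $U^{k}_{\alpha+1}$ is the result of projecting the last coordinate out of $(\omega^\omega\times(\omega^\omega)^{k+1})\setminus U^{k+1}_{\alpha}$; and at limit $\alpha$ one splits the parameter $r$ into components $((r)_{n})_{n<\omega}$, the first part of each coding a real $s_{n}$ and the second a well-order of $\omega$ of some type $\gamma_{n}<\alpha$, and declares $(r,x)\in U^{k}_{\alpha}$ iff $x\in\bigl((\omega^\omega\times(\omega^\omega)^{k})\setminus U^{k}_{\gamma_{n}}\bigr)_{s_{n}}$ for some $n$. This is a recursion on ordinals whose steps refer only to ordinals, reals, and the previously built $U^{k}_{\gamma}$, so it can be performed in $L(\mathbb R)$, and $L(\mathbb R)\models\mathrm{ZF}+\DC$ verifies the universality property.

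The main induction is on $\alpha<\omega_1$, simultaneously over all $k$. For $\alpha=0$ the claim is the remark above on closed sets. To pass from $\mathbf\Sigma^{1}_{\alpha}$ to $\mathbf\Pi^{1}_{\alpha}$ I use that complementation within $(\omega^\omega)^{k}\in L(\mathbb R)$ is absolute; to pass from $\mathbf\Pi^{1}_{\alpha}$ to $\mathbf\Sigma^{1}_{\alpha+1}$ I use that $\pr$ is absolute, so $\pr(B)$ is literally the same set in $V$ and in $L(\mathbb R)$ and $L(\mathbb R)$ places it into $\mathbf\Sigma^{1}_{\alpha+1}$. At a limit $\alpha$, given $A=\bigcup_{n<\omega}A_{n}$ with $A_{n}\in\mathbf\Pi^{1}_{\gamma_{n}}(\mathscr F,(\omega^\omega)^{k})$ and $\gamma_{n}<\alpha$, the induction hypothesis puts each $A_{n}$ into $\mathbf\Pi^{1}_{\gamma_{n}}(\mathscr F,(\omega^\omega)^{k})$ as computed in $L(\mathbb R)$, so by universality (inside $L(\mathbb R)$) of $(\omega^\omega\times(\omega^\omega)^{k})\setminus U^{k}_{\gamma_{n}}$ there is a real $s_{n}$ with $A_{n}=\bigl((\omega^\omega\times(\omega^\omega)^{k})\setminus U^{k}_{\gamma_{n}}\bigr)_{s_{n}}$. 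Using $\AC_\omega$ in $V$ I fix such an $s_{n}$ together with a real code of $\gamma_{n}$ for every $n$ and pack the lot into a single real $r$. Then $A$ is definable over $L(\mathbb R)$ from $r$ and $\langle U^{k}_{\alpha}\rangle$, so $A\in L(\mathbb R)$; and, decoding $r$, $L(\mathbb R)$ reconstructs the very sequence $(A_{n})_{n<\omega}$, so it places $A$ into $\mathbf\Sigma^{1}_{\alpha}$. This completes the induction.

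The only genuine obstacle is the limit stage: an $\omega$-sequence of $\sigma$-projective sets need not be an element of $L(\mathbb R)$ as a raw object, so one cannot simply invoke absoluteness of countable unions; the universal-set coding is precisely what circumvents this, by presenting such a sequence through a single real plus the $L(\mathbb R)$-internal machinery. (Alternatively one may bypass the explicit recursion by invoking the characterisation of $\sigma$-projective sets as exactly those definable over $(\omega,\omega^\omega,0,\Sc,+,\,\cdot\,,\Ap)$ by an $\mathscr L_{\omega_1,\omega}$-formula with real parameters, cf.~\cite{Di Prisco et al 1982}: such a formula is itself a real and hence an element of $L(\mathbb R)$, which then evaluates it over a structure it contains.)
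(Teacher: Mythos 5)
Your proof is correct and takes essentially the same route as the paper, whose own argument is just the one-line hint that countable unions are encoded by reals: your treatment of the limit stage---packing the parameters $s_n$ and real codes of the ordinals $\gamma_n$ into a single real which $L(\mathbb R)$ then decodes via internally constructed universal sets---is precisely the elaboration of that hint, the other stages being routine absoluteness. Nothing further is needed.
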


\begin{proof}
Use that countable unions are encoded by reals. 
[More details?]
\end{proof}

[The next lemma is well-known, see 
\cite{Di Prisco et al 1982}, Proposition~8,
%\cite{Di Prisco et al 1987}, 
or \cite{Kechris}, Exercise~39.18. 
]

\begin{lm}
Assume $\AD^{L(\mathbb R)}$. 
In $\omega^\omega$, for all $\alpha<\omega_1$, 
the classes $\mbf\Sigma^{1}_{2\alpha}$ and 
$\mbf\Pi^{1}_{2\alpha+1}$ have the pre-well-ordering 
property. 
Consequently, these classes have the reduction property, 
while the dual classes $\mbf\Sigma^{1}_{2\alpha+1}$ 
and $\mbf\Pi^{1}_{2\alpha}$ have the separation property.
\end{lm}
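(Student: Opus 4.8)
The plan is to run the transfinite version of Moschovakis' First Periodicity argument along the $\sigma$-projective hierarchy, exactly as one does for the ordinary projective hierarchy under $\PD$. First I would observe that $\AD^{L(\mathbb R)}$ together with the previous lemma yields $\sigma$-$\PD$: every $\sigma$-projective subset of $\omega^\omega$ belongs to $L(\mathbb R)$, and $\AD$ holds there, so every game with $\sigma$-projective payoff is determined. The prewellordering assertion --- that $\mbf\Sigma^{1}_{2\alpha}$ and $\mbf\Pi^{1}_{2\alpha+1}$ have the prewellordering property for all $\alpha<\omega_1$ --- is then proved by transfinite induction on $\alpha$; the bottom levels (that $\mbf\Pi^{1}_{1}$ and $\mbf\Sigma^{1}_{2}$ have prewellordering, and the Borel/open/closed levels as treated earlier in the paper) are classical, and the dual reduction and separation statements follow formally at the end.

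The successor steps use two standard transfer facts about the prewellordering property (equivalently, about $\Gamma$-norms). The first is a theorem of $\ZFC$: if $\Gamma$ is adequate and $\omega^\omega$-parametrized and has the prewellordering property, then so does $\exists^{\omega^\omega}\Gamma$ (\cite{Moschovakis}); this carries prewellordering from $\mbf\Pi^{1}_{2\alpha+1}$ to $\mbf\Sigma^{1}_{2\alpha+2}=\exists^{\omega^\omega}\mbf\Pi^{1}_{2\alpha+1}$. The second is the First Periodicity Theorem: if $\Gamma$ is adequate and $\omega^\omega$-parametrized, has the prewellordering property, and $\Det(\mbf\Delta_{\Gamma})$ holds, then $\forall^{\omega^\omega}\Gamma$ has the prewellordering property (\cite{Moschovakis}, \S6B; \cite{Kechris}). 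Applied with $\Gamma=\mbf\Sigma^{1}_{2\alpha}$ --- whose ambient determinacy $\Det(\mbf\Delta^{1}_{2\alpha})$ is granted by $\sigma$-$\PD$ --- it carries prewellordering from $\mbf\Sigma^{1}_{2\alpha}$ to $\mbf\Pi^{1}_{2\alpha+1}=\forall^{\omega^\omega}\mbf\Sigma^{1}_{2\alpha}$.

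At a limit ordinal $\lambda<\omega_1$ --- which is automatically even --- $\mbf\Sigma^{1}_{\lambda}$ is the class of countable unions of members of $\bigcup_{\beta<\lambda}\mbf\Pi^{1}_{\beta}$. Here I would use that each such member already lies in an even class $\mbf\Sigma^{1}_{\gamma}$ with $\gamma<\lambda$ (since $\mbf\Pi^{1}_{\beta}\subseteq\mbf\Sigma^{1}_{\beta+2}$ and $\lambda$ is a limit), that each such $\mbf\Sigma^{1}_{\gamma}$ carries a norm by the induction hypothesis, and that these norms assemble lexicographically --- the level $\gamma$ first, then the norm inside level $\gamma$ --- into a $\mbf\Sigma^{1}_{\lambda}$-norm, using only the $\sigma$-adequacy of the classes $\mbf\Sigma^{1}_{\gamma}$ for $\gamma<\lambda$. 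This is the transfinite analogue of the closure of $\Gamma$-norms under countable unions, and it is precisely the limit-level step carried out for the $\sigma$-projective hierarchy in \cite{Kechris} and \cite{Di Prisco et al 1982}.

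The ``consequently'' clause is then formal: the prewellordering property implies the reduction property (\cite{Moschovakis}, p.~37), so $\mbf\Sigma^{1}_{2\alpha}$ and $\mbf\Pi^{1}_{2\alpha+1}$ have reduction, and reduction in a class $\mathscr S(X)$ yields separation in its dual $-\mathscr S(X)$ by Lemma~\ref{l: red in subspace}(i), so $\mbf\Pi^{1}_{2\alpha}=-\mbf\Sigma^{1}_{2\alpha}$ and $\mbf\Sigma^{1}_{2\alpha+1}=-\mbf\Pi^{1}_{2\alpha+1}$ have separation. I expect the First Periodicity step to be the main obstacle: one must check, uniformly in $\alpha<\omega_1$, both that $\mbf\Sigma^{1}_{2\alpha}$ has the closure and parametrization properties required by Moschovakis' theorem and that the needed determinacy at that level is available --- the latter being exactly where $\AD^{L(\mathbb R)}$, via the inclusion of all $\sigma$-projective sets in $L(\mathbb R)$, is invoked; the limit-level bookkeeping (relativizing the component norms to the real coding the countable union and tracking parities so the pieces land in prewellordered classes) is the secondary delicate point. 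None of this is new, however: it is the content of the standard treatments in \cite{Kechris} and \cite{Di Prisco et al 1982}.
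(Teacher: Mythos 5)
Your proposal is correct and follows the same overall route as the paper: induction on $\alpha$, with the successor steps handled exactly as you describe --- the $\ZFC$ transfer of the pre-well-ordering property from $\mbf\Gamma$ to $\exists\mbf\Gamma$, and the First Periodicity transfer from $\mbf\Sigma^{1}_{2\alpha}$ to $\mbf\Pi^{1}_{2\alpha+1}$, whose determinacy hypothesis is supplied by the preceding lemma that all $\sigma$-projective sets lie in $L(\mathbb R)$ (so that $\AD^{L(\mathbb R)}$ yields $\sigma$-$\PD$; the paper cites the two facts as in \cite{Kanamori}, 29.9 and 29.13, or \cite{Kechris}, 39.1, and also records the $\DC$ hypothesis in the second, which you should state explicitly). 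The one genuine difference is the limit step. You assemble a norm on $\mbf\Sigma^{1}_{\lambda}$ lexicographically from the norms on even classes below $\lambda$ provided by the induction hypothesis; this works, but it drags the induction hypothesis and some parity and closure bookkeeping into the limit case. The paper instead observes that for limit $\lambda$ (where $2\lambda=\lambda$) the class $\mbf\Sigma^{1}_{\lambda}$ has the pre-well-ordering property outright, with no determinacy and no induction hypothesis: given $A=\bigcup_{n<\omega}A_n$ with $A_n\in\bigcup_{\beta<\lambda}\mbf\Pi^{1}_{\beta}$, replace the $A_n$ by the disjoint sets $A'_n=A_n\setminus\bigcup_{i<n}A_i$, which still lie in classes below $\lambda$, and take the norm $\rho(x)=n$ for $x\in A'_n$; the relations witnessing that $\rho$ is a $\mbf\Sigma^{1}_{\lambda}$-norm are then countable unions of finite intersections of sets from levels below $\lambda$. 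This disjointification trick is more elementary and keeps the limit case entirely self-contained, whereas your assembly is the standard $\sigma$-union-of-norms argument as in \cite{Kechris} and \cite{Di Prisco et al 1982}; both yield the lemma, and the concluding derivation of reduction and separation is identical in the two treatments.
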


\begin{proof}
Let us note first that if $\alpha>0$ is limit (and so 
$2\alpha=\alpha$) then the class $\mbf\Sigma^{1}_{\alpha}$ 
clearly has the pre-well-ordering property without 
any determinacy assumptions. Indeed, any set~$A$ in
$\mbf\Sigma^{1}_{\alpha}$ can be represented by 
a~countable union of {\it disjoint\/} sets in
$\bigcup_{\beta<\alpha}\mbf\Pi^{1}_{\beta}$: 
if $A=\bigcup_{n<\omega}A_n$ for some $A_n$ in 
$\bigcup_{\beta<\alpha}\mbf\Pi^{1}_{\beta}$, 
letting $A'_n=A_n\setminus\bigcup_{i<n}A_i$, 
we get $A=\bigcup_{n<\omega}A_n$ with $A'_n$ in
$\bigcup_{\beta<\alpha}\mbf\Pi^{1}_{\beta}$.
Then the map~$\rho$ of $A$ into ordinals defined
by letting $\rho(x)=n$ whenever $x\in A'_n$, 
is a~norm in $\mbf\Sigma^{1}_{\alpha}$. 
[write this fact as a~separate lemma?]

Then we can prove the general statement by 
induction on~$\alpha$, using at successor steps 
[the lemma above] and two following facts 
(see \cite{Kanamori}, Theorems 29.9 and 29.13;
or \cite{Kechris}, Theorem~39.1, 
or~\cite{Moschovakis}):
\begin{enumerate}
\item[(i)] 
if $\forall\mbf\Gamma\subseteq\mbf\Gamma$ 
and $\mbf\Gamma$ has the pre-well-ordering property, 
then so does $\exists\mbf\Gamma$,
\item[(ii)] 
assuming $\DC$ and $\Det(\btu\mbf\Gamma)$, 
if $\exists\mbf\Gamma\subseteq\mbf\Gamma$ 
and $\mbf\Gamma$ has the pre-well-ordering property,
then so does $\forall\mbf\Gamma$.
\end{enumerate}
[In~\cite{Kechris}, Theorem~39.1, this version of 
the First Periodicity Theorem relates only to those 
classes~$\mathbf\Gamma$ of sets in Polish spaces that 
are closed under continuous pre-images and projections.]
\end{proof}

\begin{q}
1. 
Does this First Periodicity Theorem (i.e., items (i)
and~(ii) above) remain true for more general classes? 
e.g., for effective classes of sets of reals? 
[this may be well known, check!!!]

2. 
The same question for the game quantifier~$\mathsf{G}$ instead 
of the usual quantifier (like \cite{Kechris}, Proposition~39.6
and Theorem~39.7).
\end{q}

\newpage 

\begin{q}
Does $\PD$ imply $\Det(\mbf\Sigma^{1}_{\omega})$?
More generally, what is the precise consistency strength 
of $\Det(\mbf\Sigma^{1}_{\alpha})$ for a~given~$\alpha$? 
for all~$\alpha$, i.e., of $\sigma$-$\PD$? 
is it weaker than $\Det$(Hyp)? than $\AD$?

As well-known [references??], 
\begin{enumerate}
\item[(i)]
$n$~Woodin cardinals and a~measurable above 
implies $\Det(\mbf\Sigma^{1}_{n+1})$,
\item[(ii)]
$\omega$~Woodin cardinals implies~$\PD$ 
(this follows from~(i)),
\item[(iii)]
$\omega$~Woodin cardinals and a~measurable above 
implies $\AD^{L(\mathbb R)}$ (and hence, by the 
lemma above, $\Det(\mbf\Sigma^{1}_{\alpha})$ for 
all $\alpha<\omega_1$ [thus we do not need, e.g., 
$\alpha$~Woodins and a~measurable above; it seems, 
$\alpha$~Woodins give something for longer games
-- double check!!].
\end{enumerate}
\end{q}

\begin{tm}
Suppose $\DC$ and assume $\AD^{L(\mathbb R)}$ 
[in fact, $\sigma$-$\PD$ suffices!!!]. Then for any 
Tychonoff space~$X$ and all $\alpha\in\omega_1\setminus2$, 
the classes $\mbf\Sigma^{1}_{2\alpha}(\mathscr Z,X)$ 
and $\mbf\Pi^{1}_{2\alpha+1}(\mathscr Z,X)$ 
have the reduction property, while 
$\mbf\Sigma^{1}_{2\alpha+1}(\mathscr Z,X)$ 
and $\mbf\Pi^{1}_{2\alpha}(\mathscr Z,X)$ 
have the separation property.
\end{tm}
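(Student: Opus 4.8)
The plan is to obtain this from the main theorem, Theorem~\ref{t: red tych}, in exactly the way Corollary~\ref{c: period tych}(iii) is obtained from it, but with the finite projective levels replaced by the countable $\sigma$-projective ones and $\PD$ replaced by $\sigma$-$\PD$. The hypothesis $\AD^{L(\mathbb R)}$ enters only to secure $\sigma$-$\PD$: every $\sigma$-projective subset of $\omega^\omega$ lies in $L(\mathbb R)$ (countable unions are coded by reals, and projections and complements do not leave $L(\mathbb R)$), so $\AD^{L(\mathbb R)}$ yields determinacy of every game with $\sigma$-projective payoff; if one assumes $\sigma$-$\PD$ outright this step is vacuous, which is the sense of the parenthetical remark.

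Next I would invoke the First Periodicity Theorem for the $\sigma$-projective hierarchy: under $\DC$ and $\sigma$-$\PD$, for every $\alpha<\omega_1$ the classes $\mbf\Sigma^{1}_{2\alpha}(\mathscr F,\omega^\omega)$ and $\mbf\Pi^{1}_{2\alpha+1}(\mathscr F,\omega^\omega)$ have the pre-well-ordering property, hence the reduction property, while the dual classes $\mbf\Sigma^{1}_{2\alpha+1}(\mathscr F,\omega^\omega)$ and $\mbf\Pi^{1}_{2\alpha}(\mathscr F,\omega^\omega)$ have the separation property. This is proved by transfinite induction on $\alpha$: at successor steps one uses the propagation of pre-well-ordering under the quantifier $\exists$ (needing no determinacy) and under $\forall$ (needing $\DC$ together with determinacy of $\btu$ of the class one level down, available from $\sigma$-$\PD$); at limit steps one uses the elementary observation that a countable union of sets from $\bigcup_{\beta<\alpha}\mbf\Pi^{1}_{\beta}$ may be taken disjoint, which furnishes an obvious norm. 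Since $\mathbb R$ is Polish with $\mathscr Z=\mathscr F$, the same periodicity pattern holds verbatim for $\varPhi(\mathscr F,\mathbb R)$ whenever $\varPhi$ is a Hausdorff operation defining one of these classes.

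Finally I would use the structural fact, already exploited in the paper for the finite projective hierarchy and said there to continue to the $\sigma$-projective case, that each $\sigma$-projective class is of the form $\varPhi(\mathscr F,\cdot)$ for a suitable Hausdorff operation $\varPhi$ of countable arity, and that this same $\varPhi$ is, by definition, what defines the corresponding class $\varPhi(\mathscr S,Z)$ in an arbitrary space $Z$; in particular $\mbf\Sigma^{1}_{2\alpha}(\mathscr Z,X)=\varPhi(\mathscr Z,X)$ and $\mbf\Sigma^{1}_{2\alpha}(\mathscr F,\mathbb R)=\varPhi(\mathscr F,\mathbb R)$, and likewise for the three remaining families. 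Then, for each of the four families, I apply Theorem~\ref{t: red tych}: the corresponding $\varPhi(\mathscr F,\mathbb R)$ has reduction (resp.\ separation) by the preceding paragraph, hence $\varPhi(\mathscr Z,X)$ has reduction (resp.\ separation) for every Tychonoff $X$, which is the assertion; the restriction $\alpha\in\omega_1\setminus2$ merely excludes the low levels already covered by Corollary~\ref{c: period tych}. The part I expect to require the most care is this last structural step: one must carry the Fundamental Theorem on Projections of Kantorovich and Livenson through all countable ordinals, in particular verifying that countable-arity Hausdorff operations are closed under the countable unions occurring at limit stages, so that every $\sigma$-projective class genuinely is $\varPhi(\mathscr F,\cdot)$ for one fixed countable-arity $\varPhi$ uniformly across spaces. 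Once that is in hand, the remainder is a direct application of the machinery already developed in this paper.
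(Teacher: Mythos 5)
Your proposal is correct and follows essentially the same route as the paper: the paper's own proof derives the statement from the First Periodicity Theorem for the $\sigma$-projective hierarchy in $\omega^\omega$ (established via the observation that $\sigma$-projective sets lie in $L(\mathbb R)$, so $\AD^{L(\mathbb R)}$ gives $\sigma$-$\PD$, with pre-well-ordering propagated by induction exactly as you describe, including the disjointification norm at limit levels), combined with Theorem~\ref{t: red tych} and the fact, obtained from the Kantorovich--Livenson Fundamental Theorem on Projections by induction on $\alpha$, that each $\sigma$-projective class is $\varPhi(\mathscr F,\cdot)$ for a single countable-arity Hausdorff operation. Your extra care about the limit-stage closure under countable unions is a point the paper treats as routine, but it does not change the argument.
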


\begin{proof}
This follows from [the lemma above, i.e.,
the First Periodicity Theorem for $\sigma$-projective
classes in~$\omega^\omega$] and Theorem~\ref{t: red tych}.
\end{proof}

\begin{q}
Does not the pre-well-ordering property itself 
hold for these classes? 
What about stronger properties as scales? 

(Note that $\sigma$-$\PD$ implies the scale property 
for $\sigma$-projective sets in~$\omega^\omega$, 
see \cite{Di Prisco et al 1982}, Proposition~8, 
or \cite{Kechris}, Exercise~39.18.)
\end{q}

%\vskip+1em

\begin{q}
Check whether the following is true: 
if $A\le_{\mathrm W}B$ (the Wadge ordering),
then the Hausdorff operation with the base~$B$ is 
stronger than one with the base~$A$. Consequently, 
under $\AD^{L(\mathbb R)}$, the Hausdorff operations 
with bases in~$L(\mathbb R)$ are well-ordered.

By \cite{Kantorovich Livenson}, Theorem~8.1, 
$\varPhi_S\le\varPhi_T$ iff $S\subseteq\widetilde{T}$, 
where 
$
\widetilde{T}=
\{g\in\omega^\omega:(\exists f\in T)\,\ran g=\ran f\}
$ 
(note that, as $\ran g=\ran f$ implies
$\bigcap_nA_{f\uhr n}=\bigcap_nA_{g\uhr n}$, 
we clearly have $\varPhi_T=\varPhi_{\widetilde{T}}$). 

\end{q}

\end{footnotesize}

\newpage

\begin{footnotesize}

\section*{$\varPhi$-hierarchy: 
pre-well-ordering, reduction, separation}

Here we consider the hierarchy constructed by a~given 
Hausdorff operation~$\varPhi$ [of countable arity for 
the moment] and its dual operation~$\varPsi^*$ as follows: 
\begin{align*}
\varPhi_{0}(\mathscr S,X)
&=
\mathscr S(X),
\\
\varPhi_{\alpha}(\mathscr S,X) 
&=
\bigl\{\varPhi(A_n)_{n<\omega}:
(A_n)_{n<\omega}\in 
\bigl(\,{\textstyle\bigcup_{\beta<\alpha}}
\varPhi^{*}_{\beta}(\mathscr S,X)\bigr)^\omega
\bigr\}.
\end{align*}
If $\varPhi$~is the operation of countable union, 
we get the Borel hierarchy; if $\varPhi$~is the 
$A$-operation, we get the hierarchy of $C$-sets 
(see, e.g., \cite{Kanovei 1988},~\S2). 
In general, we can call it 
the $C$-{\it like hierarchy given by}~$\varPhi$.

\begin{lm}
Let $\varPhi$~be a~Hausdorff operation with a~base 
in $L(\mathbb R)$, and let $\mathscr S\subseteq\omega^\omega$ 
be a~family in $L(\mathbb R)$.
Then for all $\alpha\in\omega_1$, 
the classes $\varPhi_{\alpha}(\mathscr S)$ 
and $\varPhi^{*}_{\alpha}(\mathscr S)$ 
are in $L(\mathbb R)$.
\end{lm}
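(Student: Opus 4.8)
My plan is to run the transfinite recursion defining the hierarchy \emph{inside}~$L(\mathbb R)$ and to verify, by induction on $\alpha<\omega_1$, that it yields exactly the classes obtained by running it in~$V$. This is reasonable because $L(\mathbb R)$ is a model of~ZF containing every real, the base~$S$ of~$\varPhi$ (by hypothesis), and $\mathscr S(\omega^\omega)$, and because every countable ordinal is countable in~$L(\mathbb R)$, so that $\omega_1^{L(\mathbb R)}=\omega_1$. First I would record the absoluteness facts that drive everything: $\varPhi$, its dual~$\varPhi^*$ (whose base is obtained from~$S$ by an absolute operation and so also lies in~$L(\mathbb R)$), complementation in~$\omega^\omega$, and countable unions all act absolutely between $V$ and~$L(\mathbb R)$ on sequences that belong to~$L(\mathbb R)$, and send them to elements of~$L(\mathbb R)$: for $(A_n)_{n<\omega}\in L(\mathbb R)$,
$$
\varPhi(A_n)_{n<\omega}=
\bigl\{x\in\omega^\omega:\exists f\in S\;\forall n\;x\in A_{f\uhr n}\bigr\}
$$
is a subset of~$\omega^\omega$ definable over~$L(\mathbb R)$ with all its quantifiers ranging over elements of~$L(\mathbb R)$, hence an element of~$L(\mathbb R)$, and likewise for $\varPhi^*$, complements and $\bigcup_n A_n$.

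To make the induction propagate I would strengthen the conclusion to: for each $\alpha<\omega_1$ both $\varPhi_\alpha(\mathscr S)$ and $\varPhi^*_\alpha(\mathscr S)$ are images of~$\mathbb R$ under functions lying in~$L(\mathbb R)$ (``coded by reals in~$L(\mathbb R)$''); being such images, they are in~$L(\mathbb R)$, as is each of their members. The case $\alpha=0$ asks that $\mathscr S(\omega^\omega)$ be coded by reals in~$L(\mathbb R)$, which is what the hypothesis ``$\mathscr S$ is a family in~$L(\mathbb R)$'' is meant to supply in the cases of interest (for $\mathscr S$ one of $\mathscr F,\mathscr G,\mathscr K,\mathscr Z$ over~$\omega^\omega$ the members are coded by reals via the usual codings, and the coding map is definable, hence in~$L(\mathbb R)$). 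For the inductive step, fix $\alpha<\omega_1$ and a real coding a bijection $\omega\to\alpha$; the induction hypothesis, applied in the form ``the recursion run inside~$L(\mathbb R)$ up to~$\alpha$ agrees with the one in~$V$ and produces, in~$L(\mathbb R)$, a sequence of coding maps'', gives maps $c_\beta\in L(\mathbb R)$ with $c_\beta[\mathbb R]=\varPhi^*_\beta(\mathscr S)$ for $\beta<\alpha$, and the sequence $(c_\beta)_{\beta<\alpha}$ itself lies in~$L(\mathbb R)$. Splicing these (using the fixed coding of~$\alpha$) produces a single $c\in L(\mathbb R)$ with $c[\mathbb R]=\mathscr C$, where $\mathscr C:=\bigcup_{\beta<\alpha}\varPhi^*_\beta(\mathscr S)$; in particular $\mathscr C\in L(\mathbb R)$.

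It remains to code $\varPhi_\alpha(\mathscr S)=\{\varPhi(A_n)_{n<\omega}:(A_n)_{n<\omega}\in{}^\omega\mathscr C\}$ by reals in~$L(\mathbb R)$. I would put, in~$L(\mathbb R)$, $d(s):=\varPhi\bigl(c((s)_n)\bigr)_{n<\omega}$ for $s\in\mathbb R$, where $(s)_n$ is the $n$th section of~$s$; then $d\in L(\mathbb R)$ since $c\in L(\mathbb R)$ and $\varPhi$ is absolute as above, and clearly $d[\mathbb R]\subseteq\varPhi_\alpha(\mathscr S)$. For the reverse inclusion, take any $(A_n)_{n<\omega}\in{}^\omega\mathscr C$ in~$V$; choose (using choice in~$V$) reals $s_n$ with $c(s_n)=A_n$ and let $s$ code $(s_n)_{n<\omega}$. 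Then $s\in\mathbb R\subseteq L(\mathbb R)$, so the absolutely recovered sequence $(c((s)_n))_{n<\omega}=(A_n)_{n<\omega}$ actually lies in~$L(\mathbb R)$ and $d(s)=\varPhi(A_n)_{n<\omega}$; hence $d[\mathbb R]=\varPhi_\alpha(\mathscr S)$, so $\varPhi_\alpha(\mathscr S)\in L(\mathbb R)$. The argument for $\varPhi^*_\alpha(\mathscr S)$ is symmetric, interchanging the two halves of the hierarchy.

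I expect the crux to be exactly this reverse inclusion: in~$V$ the class $\varPhi_\alpha(\mathscr S)$ is assembled by quantifying over \emph{all} $\omega$-sequences from~$\mathscr C$, and a priori $L(\mathbb R)$ contains fewer of these, so one must check that each such sequence --- indeed, via the chosen codes, the sequence itself --- already lives in~$L(\mathbb R)$. This is precisely where ``coded by reals'', and hence the standing hypothesis on~$\mathscr S$, is indispensable (the statement would fail, e.g., for $\mathscr S=\mathscr P$, where $\varPhi_0(\mathscr P)=\mathscr P(\omega^\omega)^V$ need not be in~$L(\mathbb R)$). The remaining ingredients --- absoluteness of $\varPhi$, $\varPhi^*$, complements and countable unions, the equality $\omega_1^{L(\mathbb R)}=\omega_1$, and the bookkeeping in splicing the $c_\beta$ --- are routine.
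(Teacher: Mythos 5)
Your argument is correct in substance, but it follows a different route from the paper's (admittedly terse) proof. The paper argues by induction on $\alpha$ using the representation, recalled just before the lemma, of a Hausdorff operation as a projection: a $\varPhi$-set over a class $\mathscr S$ is (up to the Kantorovich--Livenson correspondence) the projection onto $X$ of a set over $X\times S$, so one only needs that $L(\mathbb R)$ contains $S$, is closed under products and projections $\pr_A(C)$ of its elements, and contains the base of the dual operation $\varPhi^{*}$; the inductive step is then immediate (and the paper even notes one could show each level is itself given by a single Hausdorff operation with base in $L(\mathbb R)$). You instead rerun the transfinite recursion inside $L(\mathbb R)$ and prove agreement with the computation in $V$, strengthening the induction hypothesis to ``each class is the image of $\mathbb R$ under a function in $L(\mathbb R)$'' and coding an arbitrary $V$-sequence $(A_n)_{n<\omega}$ of members of lower classes by a single real via chosen codes. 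This buys something real: the point you isolate as the crux --- that $\varPhi_{\alpha}(\mathscr S)$ in $V$ quantifies over all $\omega$-sequences from the lower classes, not just those in $L(\mathbb R)$ --- is exactly what the paper's sketch leaves implicit, and your sequence-coding trick is the honest way to close it. The price is that your base case needs $\mathscr S$ to be coded by reals in $L(\mathbb R)$, which is a genuine strengthening of the literal hypothesis ``$\mathscr S$ is a family in $L(\mathbb R)$''; you flag this correctly, it matches the intended applications ($\mathscr F$, $\mathscr Z$, and the classes arising in the hierarchy over them), and it is in effect also what makes the paper's projection argument work, since there the sets at each level are projections of real-coded data. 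So: correct, more explicit than the original, with a slightly reinforced hypothesis that the paper's own sketch tacitly uses as well.
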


\begin{proof} 
Recall that the operation~$\varPhi$ with a~base 
$S\subseteq\omega^\omega$ applied to sets in 
$\mathscr S(X)$ gives the same result that the 
projection of sets in $\mathscr S(X\times S)$ onto~$X$. 
Noting now that, whenever $A$ and~$B$ are 
in $L(\mathbb R)$ then so is $A\times B$ as well as 
$\pr_A(C)$ for all $C\subseteq A$ in $L(\mathbb R)$, 
and also that whenever $\varPhi$~has a~base in $L(\mathbb R)$ 
then so does~$\varPhi^{*}$, we prove the claim 
by induction on~$\alpha$.

[We may also prove by induction on~$\alpha$ that 
$\varPhi_\alpha$~is defined by a~certain single Hausdorff 
operation with a~base in $L(\mathbb R)$.]
\end{proof}

[The following can be considered as the First 
Periodicity Theorem for the $\varPhi$-hierarchy:]

\begin{lm}
Assume $\AD^{L(\mathbb R)}$. 
Let $\varPhi$~be a~Hausdorff operation with a~base in 
$L(\mathbb R)$, and let $\mathscr S$ be a~class such that
$\mathscr S(\omega^\omega)\subseteq\mathscr P(\omega^\omega)$ 
is in $L(\mathbb R)$ and has the pre-well-ordering property. 
For all $\alpha\in\omega_1\setminus1$, 
the classes $\varPhi_{2\alpha}(\mathscr S,\omega^\omega)$ 
and $\varPhi^{*}_{2\alpha+1}(\mathscr S,\omega^\omega)$ 
have the pre-well-ordering property. 
Consequently, these classes have the reduction property, 
while the dual classes 
$\varPhi_{2\alpha+1}(\mathscr S,\omega^\omega)$ 
and $\varPhi^{*}_{2\alpha}(\mathscr S,\omega^\omega)$ 
have the separation property.
\end{lm}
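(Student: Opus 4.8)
The plan is to rerun the proof of the First Periodicity Theorem, with the real quantifiers $\exists^{\omega^\omega}$ and $\forall^{\omega^\omega}$ replaced by $\varPhi$ and its dual $\varPhi^*$. First I would record the ambient data. By the preceding lemma each of the classes $\varPhi_\alpha(\mathscr S,\omega^\omega)$ and $\varPhi^*_\alpha(\mathscr S,\omega^\omega)$ consists of sets lying in $L(\mathbb R)$ and is itself of the form $\varPsi(\mathscr S,\omega^\omega)$ for a single Hausdorff operation $\varPsi$ whose base is again in $L(\mathbb R)$; since $\AD^{L(\mathbb R)}$ makes every subset of $\omega^\omega$ in $L(\mathbb R)$ determined (and, inside $L(\mathbb R)$, also yields $\DC$), the statement $\Det(\btu\varPhi_\beta(\mathscr S,\omega^\omega))$ holds for every $\beta$. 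I would also recall the representation used earlier in the paper: applying $\varPhi$ with base $S$ to a sequence of sets from a pointclass $\mathbf\Gamma$ over $X$ yields exactly the projections to $X$ of the sets of $\mathbf\Gamma(X\times S)$, and dually applying $\varPhi^*$ yields the ``co-projections'' $\{x:\forall f\,(f\in S\to(x,f)\in B)\}$ with $B\in\mathbf\Gamma(X\times\omega^\omega)$. Finally, since $\varPhi$ is monotone, at level $\alpha$ every earlier class sits inside the ambiguous part, $\varPhi^*_\beta(\mathscr S,X)\subseteq\btu\varPhi_\alpha(\mathscr S,X)$ for $\beta<\alpha$.

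The core of the argument is two transfer principles, each the straightforward adaptation to $\varPhi$, $\varPhi^*$ of the ingredients (i) and (ii) recalled in the proof of the preceding lemma. (I): if $\mathbf\Gamma$ is adequate, parametrized, contains the base $S$, is closed under the finite Boolean operations and under the real universal quantifier $\forall^{\omega^\omega}$, and has the pre-well-ordering property, then the class of $\varPhi$-sets built from $\mathbf\Gamma$-sequences has the pre-well-ordering property; one proves it by the least-witness construction, a $\mathbf\Gamma$-norm $\varphi$ on $B\in\mathbf\Gamma(X\times S)$ inducing the norm $x\mapsto\inf\{\varphi(x,f):f\in S,\ (x,f)\in B\}$ on $\pr_X B$, the two comparison relations remaining in $\mathbf\Gamma$ because ``$\exists f\in S$'' is absorbed by $\mathbf\Gamma$. (II): under $\DC$ and $\Det(\btu\mathbf\Gamma)$, if $\mathbf\Gamma$ is adequate, parametrized, contains $\check S$, is closed under the finite Boolean operations and under $\exists^{\omega^\omega}$, and has the pre-well-ordering property, then the class of $\varPhi^*$-sets built from $\mathbf\Gamma$-sequences has the pre-well-ordering property; this is the game-theoretic norm construction behind part (ii), the only change being that the auxiliary game now also requires the ``$\exists$''-player's real to land in $S$, and the determinacy invoked is that of a $\btu\mathbf\Gamma$-game.

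Granting (I) and (II), I would induct on $\alpha\in\omega_1\setminus1$, proving simultaneously that $\varPhi_{2\alpha}(\mathscr S,X)$ has the pre-well-ordering property and, using that one application of $\varPhi$ has rendered it closed under $\exists^{\omega^\omega}$, that $\varPhi^*_{2\alpha+1}(\mathscr S,X)$ has it too. Concretely, at stage $\alpha$ the class from which $\varPhi_{2\alpha}$ is built is $\bigcup_{\beta<2\alpha}\varPhi^*_\beta(\mathscr S,X)$, which (by monotonicity at a successor stage it is just $\varPhi^*_{2\alpha-1}(\mathscr S,X)$, normed at the previous step; at a limit stage one absorbs the countably many members into a single parametrized envelope carrying the norms obtained below) satisfies the hypotheses of (I), so $\varPhi_{2\alpha}(\mathscr S,X)$ has the pre-well-ordering property; then (II), with $\Det(\btu\varPhi_{2\alpha}(\mathscr S,\omega^\omega))$ from the first paragraph, gives the pre-well-ordering property of $\varPhi^*_{2\alpha+1}(\mathscr S,X)$. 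The induction is anchored at $\mathscr S(X)=\varPhi_0(\mathscr S,X)$, which has the pre-well-ordering property by hypothesis. The ``consequently'' clause is then immediate: the pre-well-ordering property implies reduction, and by Lemma~\ref{l: red in subspace}(i) reduction of $\varPhi_{2\alpha}(\mathscr S,X)$ (resp.\ of $\varPhi^*_{2\alpha+1}(\mathscr S,X)$) yields the separation property for the dual class $\varPhi^*_{2\alpha}(\mathscr S,X)$ (resp.\ $\varPhi_{2\alpha+1}(\mathscr S,X)$).

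The main difficulty, I expect, lies in two places. First, principle (II): one must be sure that the game argument of part (ii) is genuinely insensitive to replacing $\forall^{\omega^\omega}$ by ``$\forall f\in S$'', which forces the base $S$ and its complement $\check S$ to be absorbed by the normed pointclass at every stage — this is precisely where the hypothesis that $\varPhi$'s base lies in $L(\mathbb R)$ enters, and it is why one must track, through the preceding lemma, that the bases $\varPsi_\alpha$ of the successive levels stay low enough, and also why one needs $\mathscr S$ to be, or to be replaced by, a sufficiently reasonable pointclass (adequate, parametrized, closed under one real quantifier) so that the parity of the periodicity comes out right. Second, the limit stages: unlike the $\sigma$-projective hierarchy, whose limit levels are countable unions and so carry a norm for free, the $\varPhi$-hierarchy has no such structure at limit $\alpha$, so $\varPhi_\alpha(\mathscr S,X)$ inherits no obvious norm; the remedy is to present $\bigcup_{\beta<\alpha}\varPhi^*_\beta(\mathscr S,X)$ — via a diagonalized base and the parametrization — as a subclass of a single normed pointclass to which (I) still applies, and making this bookkeeping airtight is the most delicate part of the proof.
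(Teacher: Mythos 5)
You follow the same route the paper itself sketches: its proof of this lemma is only the instruction to induct on~$\alpha$, citing the preceding lemma (so that under $\AD^{L(\mathbb R)}$ every level of the hierarchy lies in $L(\mathbb R)$ and the relevant games are determined) together with the two periodicity facts (i) and~(ii) quoted for the $\sigma$-projective case; your proposal reproduces exactly this skeleton, with the real quantifiers replaced by ``$\exists f\in S$'' and ``$\forall f\in S$'' via the representation of $\varPhi$-sets as projections along the base. So there is no divergence of method, and your elaboration of the game step and of the limit stages goes beyond what the paper writes down (its proof ends with an explicit ``double check'' flag and the whole section is suppressed from the final text).

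The genuine gap is the one you flag but do not close, and it is not mere bookkeeping. Your transfer principles (I) and (II) inherit the closure hypotheses of the classical facts --- closure of the ambient pointclass under $\forall^{\omega^\omega}$ for (I), and under $\exists^{\omega^\omega}$ (plus absorption of $S$ and its complement) for (II) --- and nothing in the hypotheses of the lemma (base in $L(\mathbb R)$, $\mathscr S$ normed and in $L(\mathbb R)$) lets the induction verify them for the levels $\varPhi_\beta(\mathscr S,\omega^\omega)$, $\varPhi^{*}_\beta(\mathscr S,\omega^\omega)$ of an arbitrary Hausdorff operation. Indeed the step corresponding to (II) can be outright false in this generality: take $\varPhi$ to be the operation of countable union (a Hausdorff operation with countable, hence $L(\mathbb R)$, base; the hidden section itself notes that the resulting hierarchy is then the Borel hierarchy) and $\mathscr S=\mathscr G(\omega^\omega)$, which is normed and in $L(\mathbb R)$. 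The classes $\varPhi^{*}_{2\alpha+1}(\mathscr S,\omega^\omega)$ are then multiplicative Borel classes of the form $\mbf\Pi^{0}_{\xi}$ with $\xi\ge 2$, which have the separation property and therefore cannot have reduction, let alone the pre-well-ordering property --- exactly the opposite of what the lemma asserts. So no refinement of the limit-stage ``envelope'' can rescue the argument as written: the statement needs extra hypotheses making $\varPhi$ projection-like (e.g.\ that a single application of $\varPhi$ over $\mathscr S$ already yields closure under $\exists^{\omega^\omega}$, as in the $\sigma$-projective case), and your proof, like the paper's own sketch, does not supply or use such a hypothesis.
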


\begin{proof} 
Use [the previous lemma] and two facts used in 
[the theorem on $\sigma$-projectives.] 
[double check!!!]
\end{proof}

%\newpage 

\begin{tm}
Assume $\DC$ and $\AD^{L(\mathbb R)}$. 
Let $X$~be a~Tychonoff space.
Then for all $\alpha\in\omega_1\setminus2$, 
the classes $\varPhi_{2\alpha}(\mathscr Z,X)$ 
and $\varPhi^{*}_{2\alpha+1}(\mathscr Z,X)$ 
have the reduction property, while the dual 
classes $\varPhi_{2\alpha+1}(\mathscr Z,X)$ 
and $\varPhi^{*}_{2\alpha}(\mathscr Z,X)$ 
have the separation property.
\end{tm}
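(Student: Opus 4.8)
The plan is to repeat, essentially verbatim, the argument used above for the $\sigma$-projective version: combine the First Periodicity Theorem for the $\varPhi$-hierarchy in $\omega^\omega$ (the lemma just proved) with the transfer result Theorem~\ref{t: red tych}. The one piece of extra bookkeeping is the passage ``hierarchy level $=$ single Hausdorff operation'', which has to be arranged \emph{uniformly} for closed sets in Polish spaces and for zero sets in Tychonoff spaces, since Theorem~\ref{t: red tych} has a hypothesis about $\varPhi(\mathscr F,\mathbb R)$ and a conclusion about $\varPhi(\mathscr Z,X)$ for \emph{the same} operation $\varPhi$.

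Concretely, for each $\alpha<\omega_1$ I would fix a Hausdorff operation $\widehat\varPhi_\alpha$ with base in $L(\mathbb R)$, together with its dual $\widehat\varPhi^*_\alpha$, such that $\varPhi_\alpha(\mathscr S,Z)=\widehat\varPhi_\alpha(\mathscr S,Z)$ and $\varPhi^*_\alpha(\mathscr S,Z)=\widehat\varPhi^*_\alpha(\mathscr S,Z)$ whenever $(\mathscr S,Z)$ is either $(\mathscr F,Z)$ with $Z$ Polish or $(\mathscr Z,Z)$ with $Z$ Tychonoff. This is the bracketed observation following the lemma that the classes $\varPhi_\alpha(\mathscr S)$ lie in $L(\mathbb R)$: one recurses on $\alpha$, using that Hausdorff operations are closed under composition and under countable ``sums'' (so both the outer application of $\varPhi$ and the limit stages stay within the class of Hausdorff operations) and that dualising preserves ``base in $L(\mathbb R)$''. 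The only step in which the generating class intervenes is the rewriting of the dual levels $\varPhi^*_\beta$ occurring in the definition of $\varPhi_\alpha$; here one uses the single common feature $-\mathscr S(Z)\subseteq\varPhi_\cup(\mathscr S,Z)$, with $\varPhi_\cup$ the countable-union operation: $-\mathscr F(Z)=\mathscr G(Z)\subseteq\mathscr F_\sigma(Z)$ for $Z$ metrizable, and the cozero sets of any Tychonoff $Z$ are in $\mathscr Z_\sigma(Z)$ since $\{x:f(x)\neq0\}=\bigcup_n\{x:|f(x)|\geq 1/n\}$.

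With these operations in hand the argument is immediate. By the First Periodicity Theorem for the $\varPhi$-hierarchy, under $\DC$ and $\AD^{L(\mathbb R)}$ --- the latter supplying, because every $\widehat\varPhi_\alpha(\mathscr F,\omega^\omega)$ belongs to $L(\mathbb R)$, all the instances of determinacy needed at the ``$\forall$''-steps --- the classes $\varPhi_{2\alpha}(\mathscr F,\omega^\omega)$ and $\varPhi^*_{2\alpha+1}(\mathscr F,\omega^\omega)$ have the pre-well-ordering property, hence the reduction property, while their duals $\varPhi^*_{2\alpha}(\mathscr F,\omega^\omega)$ and $\varPhi_{2\alpha+1}(\mathscr F,\omega^\omega)$ have the separation property. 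These properties transfer from $\omega^\omega$ to $\mathbb R$ (one may instead carry the whole discussion in $[0,1]^\omega$, where $\mathscr Z$ and $\mathscr F$ coincide, exactly as in the proof of Theorem~\ref{t: red tych}). Hence $\widehat\varPhi_{2\alpha}(\mathscr F,\mathbb R)=\varPhi_{2\alpha}(\mathscr F,\mathbb R)$ and $\widehat\varPhi^*_{2\alpha+1}(\mathscr F,\mathbb R)=\varPhi^*_{2\alpha+1}(\mathscr F,\mathbb R)$ have reduction, and $\widehat\varPhi^*_{2\alpha}(\mathscr F,\mathbb R)$, $\widehat\varPhi_{2\alpha+1}(\mathscr F,\mathbb R)$ have separation. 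Applying Theorem~\ref{t: red tych} to each of the Hausdorff operations $\widehat\varPhi_{2\alpha},\widehat\varPhi^*_{2\alpha+1}$ and $\widehat\varPhi^*_{2\alpha},\widehat\varPhi_{2\alpha+1}$ in turn yields, for every Tychonoff $X$, that $\varPhi_{2\alpha}(\mathscr Z,X)$ and $\varPhi^*_{2\alpha+1}(\mathscr Z,X)$ have the reduction property and $\varPhi^*_{2\alpha}(\mathscr Z,X)$, $\varPhi_{2\alpha+1}(\mathscr Z,X)$ have the separation property, for all $\alpha$ in the range for which the First Periodicity lemma applies (in particular for the stated $\alpha\geq2$).

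I expect the main obstacle to be precisely the uniformity in the second paragraph: making the ``single Hausdorff operation'' representation of a hierarchy level the same operation for the closed-sets-in-Polish and zero-sets-in-Tychonoff cases, while keeping its base inside $L(\mathbb R)$ (so that $\AD^{L(\mathbb R)}$ still bites). Everything else --- closure of Hausdorff operations under composition, the $\omega^\omega$-to-$\mathbb R$ transfer, and the roles of $\DC$ (the ``$\forall$'' step of periodicity) and $\AD^{L(\mathbb R)}$ --- is routine or already available in the excerpt.
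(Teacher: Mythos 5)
Your proposal matches the paper's own proof, which obtains the theorem in one line by combining the First Periodicity Theorem for the $\varPhi$-hierarchy in $\omega^\omega$ (the preceding lemma) with Theorem~\ref{t: red tych}. The uniform ``each level is given by a single Hausdorff operation with base in $L(\mathbb R)$'' representation that you single out as the main obstacle is exactly the bookkeeping the paper relegates to its bracketed remark after that lemma, so your write-up is the same argument in slightly more detail.
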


\begin{proof}
This follows from [the lemma above, i.e., the First 
Periodicity Theorem for the $\varPhi$-hierarchy
in~$\omega^\omega$] and Theorem~\ref{t: red tych}.
\end{proof}

\begin{q}
Does not the pre-well-ordering property itself 
(or even the scale property etc.)~hold for 
these classes $\varPhi_{\alpha}(\mathscr Z,X)$? 
\end{q}

\begin{q}
Similarly for the $R$-like hierarchy.
\end{q}

\begin{q}
Write on countable reduction/separation properties 
in Tychonoff (or more general) spaces.
\end{q}

%\vskip+1em
%old text ends.]
\end{footnotesize}

\newpage

}

\subsection*{Acknowledgements.}
I am indebted to Sergei V.~Medvedev who carefully 
read an earlier version of this note and pointed 
out some inaccuracies in it. I~am also grateful 
to Olga V.~Sipacheva and Vladimir G.~Kanovei 
for helpful discussions and remarks.

%\vskip+2em

\begin{footnotesize} 
\noindent
{\sc
The Russian Academy of Sciences, 
Institute for Information Transmission Problems, 
Bolshoy Karetny lane~19, building~1, 
Moscow 127051 Russia
\/}
\\
{\it E-mail address:\/} 
d.i.saveliev@iitp.ru, 
d.i.saveliev@gmail.com 
\end{footnotesize}

\end{document}